\numberwithin{equation}{section}
\theoremstyle{plain}
\newtheorem{thm}{Theorem}[section]
\newtheorem{prop}[thm]{Proposition}
\newtheorem{cor}[thm]{Corollary}
\newtheorem{lemma}[thm]{Lemma}
\newtheorem{conj}[thm]{Conjecture}
\theoremstyle{definition}
\newtheorem{deff}[thm]{Definition}
\newtheorem{example}[thm]{Example}
\theoremstyle{remark}
\newtheorem{rmk}[thm]{\bf Remark}
\def\g{\gamma}
\def\d{\delta}
\def\G{\Gamma}
\def\mG{\mathcal{G}}
\def\xra{\xrightarrow[]{}}
\def\a{\alpha}
\def\b{\beta}
\def\D{\mathcal{D}}
\def\N{\mathbb N}
\def\HH{\mathcal{H}}
\newcommand{\Ima}{\operatorname{Im}}
\def\sub{\subseteq}
\def\VV{\mathcal{V}}
\def \Z{\mathbb Z}
\def\-{\text{-}}
\newcommand{\supp}{\operatorname{supp}}
\newcommand{\Iso}{\operatorname{Iso}}
\newcommand{\Ker}{\operatorname{Ker}}
\newcommand{\gr}{\operatorname{gr}}
\newcommand{\M}{\mathbb M}
\newcommand\Gr[1][]{{\operatorname{{\bf Gr}^{#1}-}}}
\newcommand\Modd[1][]{{\operatorname{{\bf Mod}^{#1}-}}}
\newcommand{\id}{\operatorname{id}}
\newcommand{\note}[1]{\textcolor{blue}{$\Longrightarrow$ #1}}
\begin{document}

\title[Homology of \'etale groupoids]{Homology of \'etale groupoids\\a graded approach}

\author{Roozbeh Hazrat}
\address{
Centre for Research in Mathematics\\
Western Sydney University\\
Australia} \email{r.hazrat@westernsydney.edu.au, h.li@westernsydney.edu.au}

\author{Huanhuan Li}

\subjclass[2010]{22A22, 18B40,19D55}

\keywords{\'etale groupoid, homology theory, graded homology theory, Leavitt path algebra, diagonal of Leavitt path algebra, graded Grothendieck group}

\date{\today}

\begin{abstract} We introduce a graded homology theory for graded \'etale groupoids. 
For $\Z$-graded groupoids, we establish an exact sequence relating the graded zeroth-homology to non-graded one. Specialising to the arbitrary graph groupoids, we prove that the graded zeroth homology group with constant coefficients $\Z$  is isomorphic to the graded Grothendieck group of the associated Leavitt path algebra. To do this, we consider the diagonal algebra of the Leavitt path algebra of the covering graph of the original graph and construct the group isomorphism directly. Considering the trivial grading, our result extends Matui's on zeroth homology of finite  graphs with no sinks (shifts of finite type) to all arbitrary graphs. We use our results to show that graded zeroth-homology group is a complete invariant for eventual conjugacy of shift of finite types and could be the unifying invariant for the analytic and the algebraic graph algebras. 
\end{abstract}

\maketitle

\section{Introduction}

The homology theory for \'etale groupoids was introduced by Crainic and Moerdijk~\cite{crainicmoerdijk} 
who showed that the homology groups are invariant under Morita equivalences of \'etale groupoids and established  spectral sequences which used for the computation of these groups. Matui \cite{matui,matui15,matui16} considered this homology theory in relation with the dynamical properties of groupoids and their full groups.   In~\cite{matui} Matui proved, using Lindon-Hochschild-Serre spectral sequence established by Crainic and Moerdijk, that for an \'etale groupoid $\mG$ arising from shifts of finite type, the homology groups $H_0(\mG)$ and $H_1(\mG)$ coincide with $K$-groups $K_0(C^*(\mG))$ and $K_1(C^*(\mG))$, respectively.  Here $C^*(\mG)$ is the groupoid $C^*$-algebra associated to $\mG$ which was first systematically studied by Renault in his seminal work~\cite{renault}. The current emerging picture is that the homology of groupoids should govern the behaviour of their associated algebras and dynamics. 

A groupoid $\mG$ is called \emph{graded} if one can naturally partition it by an index group $\Gamma$, namely, $\mG$ equipped with cocycle $c:\mG\rightarrow \Gamma$. The majority of the important \'etale groupoids arising from combinatorial or dynamical data are naturally graded. Taking into account the partitions and their rearrangements should give salient information about the groupoid and the structures associated to them.  

The theory of graded groupoids relates to the theory of graded rings via the vehicle of Steinberg algebras. These algebras are the algebraic version of groupoid $C^*$-algebras which have been recently introduced in~\cite{cfst,st}. For an ample groupoid $\mG$, the Steinberg algebra $A_R(\mG)$, with arbitrary coefficient ring $R$ with unit, is an algebra which  encompasses very interesting algebras such as inverse semigroup rings, Leavitt path algebras and their higher rank versions namely Kumjian-Pask algebras~\cite{pchr}. If the groupoid $\mG$ is $\Gamma$-graded then its associated Steinberg algebra $A_R(\mG)$ is naturally $\Gamma$-graded ring.  The graded structure of these algebras, in return, allows us to consider the graded invariants associated to these algebras such as graded $K$-theory, $K_n^{\gr}$, $n\geq 0$ \cite{grbook}. 

In this note, parallel to the theory of graded $K$-theory for graded rings,  we initiate and study graded homology theory
 for a $\G$-graded \'etale groupoid $\mG$, introducing the groups 
$H_n^{\gr}(\mG)$, $n\geq 0$,  which are naturally equipped with the structure of $\G$-modules. We prove that the graded homology of a strongly graded ample groupoid is isomorphic to the homology of its $\varepsilon$-th component with $\varepsilon$ the identity of the grade group. For $\mathbb Z$-graded ample groupoid $\mG$ we establish an exact sequence
\begin{align*}
    H_0^{\gr}(\mG) \longrightarrow  H_0^{\gr}(\mG) \longrightarrow  H_0(\mG) \longrightarrow 0,
\end{align*} 
which is similar to the van den Bergh exact sequence for graded $K$-theory of $\Z$-graded Noetherian regular rings (\cite[\S6.4]{grbook}).

Specialising to the graph groupoids which have a natural $\mathbb Z$-grading, we prove that for the graph groupoid $\mG_E$, arising from an arbitrary graph $E$, we have an order preserving $\Z[x,x^{-1}]$-module isomorphism $$H_0^{\gr}(\mG_E) \cong K_0^{\gr} (L_R(E)),$$ where $L_R(E)$ is the Leavitt path algebra associated to $E$ with the coefficient field $R$.  Considering the grade group to be trivial, since the $K_0$-group of Leavitt and $C^*$-algebras associated to a graph coincide we obtain that 
\begin{equation*}
H_0(\mG_E) \cong K_0 (A(\mG_E)) \cong K_0 (L(E)) \cong K_0 (C^*(E))\cong K_0 (C^*(\mG_E)).
\end{equation*}
Since shifts of finite type are characterised by finite graphs with no sinks~\cite{lindmarcus} (see also \cite[\S5]{matui16}), our result extends Matui's result to all arbitrary graphs. This is done, not by using Lindon-Hochschild-Serre spectral sequence, but by directly using the description of the (graded) monoid of graph algebras~\cite{lpabook,ahls}.  

This approach also allows us to relate the graded homology groups to invariants of symbolic dynamics.  Namely, 
for a two-sided shift $X$, denote by $X^{+}$ the one-sided shift associated to $X$, and by $\mG_{X^{+}}$ the groupoid associated to $X^{+}$ (see~\cite{carlseneilers} for a summary of concepts on symbolic dynamics). Then we establish that  $H_0^{\gr}(\mG_X^+)$ is a complete invariant characterising eventual conjugacy of $X$ (Theorem~\ref{colmorgen}).

Throughout the note the emphasis is on the module structure of the graded homology groups which carries substantial information. 
We believe that graded homology theory allows us to unify the invariants proposed for the classification of analytic and algebraic graph algebras, namely Leavitt path algebras and $C^*$-graph algebras. Combing previous work on this direction, it is plausible to formulate the following conjecture  (see~\cite{roozbehhazrat2013} and Theorem~\ref{sojoy}). 

\begin{conj}\label{conji1}
Let $E$ and $F$ be finite graphs and $R$ a field. Then the following are equivalent.

\begin{enumerate}[\upshape(1)]

\item There is a gauge preserving isomorphism $\phi: C^*(E) \rightarrow  C^*(F)$;

\smallskip

\item There is a graded ring isomorphism $\phi:L_R(E) \rightarrow L_R(F)$;

\smallskip

\item There is an order preserving $\Z[x,x^{-1}]$-module isomorphism 
$H_0^{\gr}(\mG_E) \rightarrow H_0^{\gr}(\mG_F)$ such that $\phi([1_{\mG_E^0}])=[1_{\mG_F^0}])$. 

\end{enumerate}

\end{conj}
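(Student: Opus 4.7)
The plan is to use the main result of this paper to translate condition (3) into a statement about graded $K$-theory, and then to address the three implications by combining standard uniqueness theorems with what is essentially Hazrat's open graded classification conjecture.

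First I would apply the paper's main theorem, which for a finite graph $E$ provides an order preserving, pointed $\Z[x,x^{-1}]$-module isomorphism $H_0^{\gr}(\mG_E)\cong K_0^{\gr}(L_R(E))$ sending $[1_{\mG_E^0}]$ to the class of the ring. Under this identification, (3) becomes the existence of an order preserving, pointed $\Z[x,x^{-1}]$-module isomorphism $K_0^{\gr}(L_R(E))\cong K_0^{\gr}(L_R(F))$, so it suffices to establish the three-way equivalence in this reformulated shape.

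Next I would handle (2) $\Rightarrow$ (3) and (1) $\Leftrightarrow$ (2) as the accessible directions. For (2) $\Rightarrow$ (3): a graded ring isomorphism $\phi:L_R(E)\to L_R(F)$ induces an equivalence of the categories of graded finitely generated projective modules, hence a pointed $\Z[x,x^{-1}]$-module isomorphism on $K_0^{\gr}$; the positivity is automatic, since $\phi$ sends graded projectives to graded projectives. For (1) $\Leftrightarrow$ (2) with $R=\mathbb{C}$, a gauge preserving $*$-isomorphism $C^*(E)\to C^*(F)$ sends the canonical Cuntz-Krieger $E$-family into the dense Leavitt path subalgebra of $C^*(F)$, hence restricts to a graded ring isomorphism $L_{\mathbb{C}}(E)\to L_{\mathbb{C}}(F)$; conversely, a graded $*$-preserving isomorphism of Leavitt path algebras extends across the $C^*$-completion via the gauge-invariant uniqueness theorem, and by construction intertwines the gauge actions.

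The hard part, and the real obstruction to a complete proof, is the implication (3) $\Rightarrow$ (2), which is precisely Hazrat's graded classification conjecture for Leavitt path algebras of finite graphs, and is open in general. The natural strategy would be to show that the pointed, ordered $\Z[x,x^{-1}]$-module $K_0^{\gr}(L_R(E))$ captures the strong shift equivalence class of the adjacency matrix of $E$ in the sense of Williams, and then to realise such an equivalence by a finite sequence of in- and out-splittings producing graded ring isomorphisms of the associated Leavitt path algebras. Partial results are known in restricted settings (for example polycephaly graphs and certain small rank cases), but a full proof would require genuinely new input to lift the purely module-theoretic datum to an explicit graded algebra isomorphism, and is precisely the reason the statement is formulated as a conjecture rather than a theorem.
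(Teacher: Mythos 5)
The statement you are proving is labelled a \emph{Conjecture} in the paper, and the paper offers no proof of it: it is formulated as plausible on the strength of \cite{roozbehhazrat2013} and Theorem~\ref{sojoy}, nothing more. Your opening move --- using Theorem~\ref{sojoy} to replace condition (3) by the existence of an order preserving, pointed $\Z[x,x^{-1}]$-module isomorphism $K_0^{\gr}(L_R(E))\cong K_0^{\gr}(L_R(F))$ --- is exactly the role the paper assigns to that theorem, and your identification of $(3)\Rightarrow(2)$ as the open graded classification conjecture is accurate; you are right that this cannot currently be proved. The direction $(2)\Rightarrow(3)$ is indeed the one genuinely easy implication.

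However, your claim that $(1)\Leftrightarrow(2)$ is an ``accessible direction'' is a genuine error, and the argument you sketch for it fails. A gauge preserving $*$-isomorphism $\phi: C^*(E)\to C^*(F)$ sends $s_e$ into the degree-one spectral subspace of the gauge action on $C^*(F)$, but that subspace is a \emph{completion} of the degree-one component of $L_{\mathbb C}(F)$; there is no reason $\phi(s_e)$ should be a polynomial in the generators of $C^*(F)$, so $\phi$ need not restrict to a map $L_{\mathbb C}(E)\to L_{\mathbb C}(F)$ at all. Conversely, condition (2) only supplies a graded \emph{ring} isomorphism, which need be neither $*$-preserving nor bounded, so it does not extend to the $C^*$-completions via the gauge-invariant uniqueness theorem as you assert. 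Both implications between (1) and (2) are open and are part of the content of the conjecture; the only implications that are currently within reach are $(2)\Rightarrow(3)$ and (by a $K$-theoretic argument you do not give) $(1)\Rightarrow(3)$.
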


This note is primarily concerned with the graded ample groupoids, parallel to graded ring theory. Three essential concepts in the study of graded ring theory are strongly graded rings, graded matrix rings and smash products~\cite{grbook,nastasesu-vanoystaeyen}. We recall them in Section~\ref{gradedring}. We develop and study these concepts in the setting of graded ample groupoids in Section~\ref{grgroupoidlabel}. In Section~\ref{subsection24}, we will then consider the Steinberg algebras of these groupoids and reconcile these constructions with the parallel concepts in graded ring theory. The comparison between these two theories lead us to the definition of graded homology for graded groupoids in Section~\ref{bellesebasatian}. In fact these correspondences allow us to compute the graded homology groups for certain groupoids, such as graph groupoids in Section~\ref{sectionsix}.

\section{Graded rings} \label{gradedring}

We briefly review three main concepts in graded ring theory, namely, grading on matrices, graded Morita theory and the smash product. We also recall the case of strongly graded rings. In Section~\ref{grgroupoidlabel} we do the parallel constructions in the setting of topological groupoids.  We will then observe that the concept of Steinberg algebras will tie together these concepts. They will be used to define graded homology theory in Section~\ref{bellesebasatian} and calculate them for graph groupoids in Section~\ref{sectionsix}. 

\subsection{Graded rings}
\label{grringslablel}

Let $\Gamma$ be a group with identity $\varepsilon$. A ring $A$ (possibly without unit)
is called a \emph{$\Gamma$-graded ring} if $ A=\bigoplus_{ \gamma \in \Gamma} A_{\gamma}$
such that each $A_{\gamma}$ is an additive subgroup of $A$ and $A_{\gamma}  A_{\delta}
\subseteq A_{\gamma\delta}$ for all $\gamma, \delta \in \Gamma$. The group $A_\gamma$ is
called the $\gamma$-\emph{homogeneous component} of $A.$ When it is clear from context
that a ring $A$ is graded by group $\Gamma,$ we simply say that $A$ is a  \emph{graded
ring}. If $A$ is an algebra over a ring $R$, then $A$ is called a \emph{graded algebra}
if $A$ is a graded ring and $A_{\gamma}$ is a $R$-submodule for any $\gamma \in \Gamma$.
A $\G$-graded ring $A=\bigoplus_{\g\in\G}A_{\g}$is called \emph{strongly graded} if
$A_{\g}A_{\delta}=A_{\g\delta}$ for all $\g,\delta$ in $\G$.

The elements of $\bigcup_{\gamma \in \Gamma} A_{\gamma}$ in a graded ring $A$ are called
\emph{homogeneous elements} of $A.$ The nonzero elements of $A_\gamma$ are called
\emph{homogeneous of degree $\gamma$} and we write $\deg(a) = \gamma$ for $a \in
A_{\gamma}\backslash \{0\}.$ The set $\Gamma_A=\{ \gamma \in \Gamma \mid A_\gamma \not =
0 \}$ is called the \emph{support}  of $A$. We say that a $\Gamma$-graded ring $A$ is
\emph{trivially graded} if the support of $A$ is the trivial group
$\{\varepsilon\}$---that is, $A_\varepsilon=A$, so $A_\gamma=0$ for $\gamma \in \Gamma
\backslash \{\varepsilon\}$. Any ring admits a trivial grading by any group. If $A$ is a
$\G$-graded ring and $s \in A$, then we write $s_\alpha, \alpha \in \G$ for the unique
elements $s_\alpha \in A_\alpha$ such that $s = \sum_{\alpha \in \G} s_\alpha$. Note that
$\{\alpha \in \G : s_\alpha \not= 0\}$ is finite for every $s \in A$.

We say a $\G$-graded ring $A$ has \emph{graded local units} if for any finite set of
homogeneous elements  $\{x_{1}, \cdots, x_{n}\}\subseteq A$, there exists a homogeneous
idempotent $e\in A$ such that $\{x_{1}, \cdots, x_{n}\}\subseteq eAe$. Equivalently, $A$
has graded local units, if $A_\varepsilon$ has local units with $\varepsilon$ the identity of $\G$ and $A_\varepsilon
A_{\g}=A_{\g}A_\varepsilon=A_{\g}$ for every $\g \in \G$.

For a $\G$-graded ring $A$ with graded local units, we denote by $\Gr A$ the category of unital graded right $A$-modules with morphisms preserving the grading. For a graded right $A$-module $M$, we define the $\a$-\emph{shifted} graded right 
$A$-module $M(\a)$ as
\begin{equation*}M(\a)=\bigoplus_{\g\in \G}M(\a)_{\g},
\end{equation*}
where $M(\a)_{\g}=M_{\a\g}$. That is, as an ungraded module, $M(\alpha)$ is a copy of
$M$, but the grading is shifted by $\alpha$. For $\a\in\G$, the \emph{shift functor}
\begin{equation*}
\mathcal{T}_{\a}: \Gr A\longrightarrow \Gr A,\quad M\mapsto M(\a)
\end{equation*}
is an isomorphism with the property $\mathcal{T}_{\a}\mathcal{T}_{\b}=\mathcal{T}_{\a\b}$
for $\a,\b\in\G$.

\subsection{Graded matrix rings}\label{frankairport}
Let $A=\bigoplus_{\sigma\in\G}A_{\sigma}$ be a $\G$-graded ring with $\G$ a group, $n$ is a positive integer, and $\M_n(A)$ the ring
of $n \times n$-matrices with entries in $A$. Fix some $\overline{\sigma} = (\sigma_1, \cdots , \sigma_n) \in \G^n$. To any
$\g \in \G$, we associate the following additive subgroup of $\M_n(A)$
\begin{equation*}
\M_n(A)_{\g}(\overline{\sigma})=
\begin{pmatrix}
A_{\sigma_1\g\sigma_1^{-1}}& A_{\sigma_1\g\sigma_2^{-1}}&\cdots &A_{\sigma_1\g\sigma_n^{-1}}\\
A_{\sigma_2\g\sigma_1^{-1}}&A_{\sigma_2\g\sigma_2^{-1}}&\cdots &A_{\sigma_2\g\sigma_n^{-1}}\\
\vdots&\vdots&&\vdots\\
A_{\sigma_n\g\sigma_1^{-1}}&A_{\sigma_n\g\sigma_2^{-1}}&\cdots &A_{\sigma_n\g\sigma_n^{-1}}\\
\end{pmatrix}
\end{equation*} The family of additive subgroups $\big \{\M_n(A)_{\g}(\overline{\sigma})\;|\;  \g\in \G \big \}$ defines a $\G$-grading
of the ring $\M_n(A)$ (see \cite[Proposition 2.10.4]{nastasesu-vanoystaeyen}). We will denote this graded ring by $\M_n(A)(\overline{\sigma})$.

We denote by $\M_{\G}(A)$ the ring of $|\G|\times|\G|$-matrices with finitely many nonzero entries. Here, $|\G|$ is the cardinality of $\G$. To any $\g\in \G$, we associate the following additive subgroup of $\M_{\G}(A)$ 
 $$\M_\G(A)_{\g}=\{(x_{\a\b})_{\a,\b\in\G}\;|\; x_{\a\b}\in A_{\a\g\b^{-1}}\}.$$ The family of additive subgroups $\{\M_\G(A)_\g\;|\; \g\in\G\}$ defines a $\G$-grading for the ring $\M_\G(A)$, denoted by $\M_\G(A)(\G)$. 

\subsection{Graded Morita theory}\label{coincigftr}

We say a functor $F: \Gr A \xra \Gr B $ is a \emph{graded} functor provided $F$ commutes with the shift functors, namely $F(M(\alpha))=F(M)(\alpha)$, for every $\alpha \in \Gamma$ and $M\in \Gr A $. A graded functor $F: \Gr A \xra \Gr B $ is said to be a \emph{graded equivalence} provided there is a graded functor $G: \Gr B \xra \Gr A$ which is the inverse to $F$. When such a graded equivalence exists, we say that $A$ and $B$ are \emph{graded Morita equivalent rings}.

We also need to recall the definition of \emph{graded Morita context} from \cite[Definition 2.5]{haefner}. Let $A$ and $B$ be $\G$-graded rings with graded local units. The six tuple $(A, B, M, N, \psi: M\otimes_B N \xra A, \varphi: N\otimes_A M\xra B)$ is a \emph{graded Morita context} provided:
\begin{itemize}
\item[(1)] $_AM_B$ and $_BN_A$ are graded bimodules in the following sense:
$$A_\alpha M_\beta B_ \g \subseteq M_{\alpha\beta\g} \text{~~~~and~~~~} B_{\alpha}N_\beta A_\g \subseteq N_{\alpha\beta\g},$$ for all $\alpha, \beta, \g\in \G$.

\smallskip 

\item[(2)]  $\psi: M\otimes_B N \xra A$ and $\varphi: N\otimes_A M\xra B$ are graded homomorphisms in the sense that $$\psi(M_{\alpha}\otimes N_{\beta})\sub A_{\alpha\beta} \text{~~~~and ~~~~} \varphi(N_{\alpha}\otimes M_{\beta})\sub B_{\alpha\beta}$$ for all $\alpha, \beta\in\G$
 and that satisfy the following two conditions: $$\psi(m\otimes n)m'=m\varphi(n\otimes n') \text{~~~~and~~~~} \varphi(n\otimes m)n'=n\psi(m\otimes n'),$$
for all $m,m'\in M$ and $n,n'\in N$.
 \end{itemize}
 
 By \cite[Theorem 2.6]{haefner} the graded rings $A$ and $B$ are graded Morita equivalent if and only if there is a Morita context with $\psi$ and $\phi$ surjective. This will be used in the setting of Steinberg algebras (Theorem~\ref{hfhgwolfsheim}).

\subsection{The smash product of a graded ring}\label{onboardtodenver}

For a $\G$-graded ring $A$, the smash product ring $A \#\G$ was first defined by Cohen and Montgomery for a unital ring $A$ and a finite group $\Gamma$ in their seminal paper~\cite{cm1984} (see also~\cite[Chapter 7]{nastasesu-vanoystaeyen}). They established that the category of graded $A$-modules are equivalent to the category of $A\#\G$-modules. In turn,
this allowed them to relate the graded structure of $A$ to non-graded properties of $A$.

Recall that for a $\G$-graded ring $A$ (possibly without unit), the \emph{smash product} ring $A\#\G$
is defined as the set of all formal sums $\sum_{\gamma \in \G} r^{(\gamma)} p_\gamma $,
where $r^{(\gamma)}\in A$ and $p_\gamma$ are symbols. Addition is defined component-wise
and multiplication is defined by linear extension of the rule
\begin{equation}\label{morningfaero}
(rp_{\a})(sp_{\b})=rs_{\a\b^{-1}}p_{\b},
\end{equation}  where $r,s\in A$ and $\a,\b\in\G$.

In \cite[Proposition 2.5]{ahls}, the Cohen-Montgomery result on the equivalences of categories of graded modules of unital rings graded by finite groups extended as follows:  Let $A$ be a $\G$-graded ring with graded local units. Then there is an isomorphism of categories 
\begin{equation}\label{cohenmont}
 \Gr A\longrightarrow \Modd A\#\G.
\end{equation}

The Equation~\eqref{cohenmont} allows us to describe the graded functors for the graded ring $A$ as a corresponding non-graded version on the category of $A\#\G$. As an example we can describe the graded Grothendieck group of $A$ as a usual $K_0$-group of $A\#\G$, namely, $K^{\gr}_0(A)\cong K_0(A\#\G)$.

\section{Graded Groupoids} \label{grgroupoidlabel}

In this section we develop the concepts of graded matrix groupoids and strongly graded groupoids. We then observe that semi-direct product of groupoids will replace the concept of smash product in the classical ring theory.

\subsection{Graded groupoids}\label{subsection21}

A groupoid is a small category in which every morphism is invertible. It can also be viewed as a generalisation of a group which has partial binary operation.  Let $\mG$ be a
groupoid. If $g\in\mG$, $s(g)=g^{-1}g$ is the \emph{source} of $g$ and $r(g)=gg^{-1}$ is
its \emph{range}. The pair $(g_1,g_2)$ is composable if and only if $r(g_2)=s(g_1)$. The set
$\mG^{(0)}:=s(\mG)=r(\mG)$ is called the \emph{unit space} of $\mG$. Elements of
$\mG^{(0)}$ are units in the sense that $gs(g)=g$ and $r(g)g=g$ for all $g \in \mG$. The \emph{isotropy group} at a unit $u$ of $\mG$ is the group ${\rm Iso}(u) =\{g\in\mG \mid s(g)=r(g)=u\}$. Let ${\rm Iso}(\mG)=\bigsqcup_{u\in\mG^{(0)}}{\rm Iso}(u)$. For
$U,V\in\mG$, we define
\[
    UV=\big \{g_1 g_2 \mid g_1\in U, g_2\in V \text{ and } r(g_2)=s(g_1)\big\}.
\]

A topological groupoid is a groupoid endowed with a topology under which the inverse map
is continuous, and such that composition is continuous with respect to the relative
topology on $\mG^{(2)} := \{(g_1,g_2) \in \mG \times \mG : s(g_1) = r(g_2)\}$ inherited from
$\mG\times \mG$. An \emph{\'etale} groupoid is a topological groupoid $\mG$ such that the
domain map $s$ is a local homeomorphism. In this case, the range map $r$ is also a local
homeomorphism. In this article, by an \'etale groupoid we mean a locally compact Haudsorff groupoid such that the source map $s$ is a local homeomorphism.

An \emph{open bisection} of $\mG$ is an open subset $U\subseteq \mG$ such
that $s|_{U}$ and $r|_{U}$ are homeomorphisms onto an open subset of $\mG^{(0)}$. If $\mG$ is a \'etale groupoid, then there is a
base for the topology on $\mG$ consisting of open bisections with compact closure. As demonstrated in \cite{cfst,st}, if $\mG^{(0)}$ is totally disconnected and $\mG$ is \'etale, then there is a basis for the topology on $\mG$ consisting of compact open bisections. We say
that an \'etale groupoid $\mG$ is \emph{ample} if there is a basis consisting of compact open bisections for its topology.

Let $\G$ be a discrete group and $\mG$ a topological groupoid. A $\G$-grading of $\mG$ is
a continuous function $c : \mG \to \G$ such that $c(g_1)c(g_2) = c(g_1g_2)$ for all $(g_1,g_2)
\in \mG^{(2)}$; such a function $c$ is called a \emph{cocycle} on $\mG$. For $\g \in \G$, setting $\mG_\g=c^{-1}(\g)$ which are clopen subsets of $\mG$, we have 
$\mG=\bigsqcup_{\g \in \G}  \mG_\g$ such that $\mG_\b \mG_\g \subseteq \mG_{\b\g}$.
 In this case, we call $\mG$ a $\G$-graded groupoid.  In this paper,
we shall also refer to $c$ as the \emph{degree map} on $\mG$. 
For $\g\in \G$, we say that $X\subseteq \mG$ is $\g$-graded if $X\subseteq \mG_\g$. We
always have $\mG^{(0)} \subseteq \mG_\varepsilon$, so $\mG^{(0)}$ is
$\varepsilon$-graded where $\varepsilon$ is the identity of $\G$. We write $B^{\rm co}_{\g}(\mG)$ for the collection of all
$\g$-graded compact open bisections of $\mG$ and
\begin{equation}\label{hdgftdggd}
B_{*}^{\rm co}(\mG)=\bigcup_{\g\in\G} B^{\rm co}_{\g}(\mG).
\end{equation}

A groupoid is trivially graded by considering the map $c:\mG \rightarrow \{\varepsilon\}$.  Throughout this paper, we consider the groupoids to be $\G$-graded. As soon as our grade group $\G$ is trivial, then we obtain the results in the non-graded setting. Many important examples of \'etale groupoids have a natural graded structure.

\begin{example}[{\it Transformation groupoids}]\label{transgo}
Let $\phi:\Gamma\curvearrowright X$ be 
an action of a countable discrete group $\Gamma$ 
on a Cantor set $X$ by homeomorphisms, equivalently there is a group homomorphism $\G\xra {\rm Home}(X)$, where ${\rm Home}(X)$ consists of homeomorphisms from $X$ to $X$ with the multiplication given by the composition of maps. Let $\mG_\phi=\Gamma\times X$ and define the groupoid structure: 
$(\gamma, \gamma' x')\cdot(\gamma',x')=(\gamma\gamma',x')$, 
and $(\gamma,x)^{-1}=(\gamma^{-1}, \gamma x)$. 
Then $\mG_\phi$ is an \'etale groupoid,  
called the transformation groupoid 
arising from $\phi:\Gamma\curvearrowright X$. 
The unit space $\mG_\phi^{(0)}$ is canonically identified with $X$ 
via the map $(\varepsilon,x)\mapsto x$. The natural cocyle $\mG_\phi\rightarrow \G, (\gamma,x)\mapsto \gamma$ makes $\mG_\phi$ a $\Gamma$-graded groupoid. 
\end{example}

\begin{example}\label{exmsemidirect}
Let $\phi:\Gamma\curvearrowright \mG$ be an action of a discrete group $\G$ on an \'etale groupoid $\mG$, i.e., there is a group homomorphism $\alpha: \G\xra {\rm Aut}(\mG)$ with ${\rm Aut}(\mG)$ the group of homeomorphisms between 
$\mG$ which respect the composition of $\mG$. The \emph{semi-direct product} $\mG\rtimes_{\phi}\G$ is $\mG\times\G$ with the following groupoid structure: $(g, \g)$ and $(g',\g')$ are composable if and only if $g$ and $\phi_{\g}(g')$ are composable, $$(g, \g)(g',\g')=(g\phi_{\g}(g'), \g\g'),$$ and $$(g,\g)^{-1}=(\phi_{\g^{-1}}(g^{-1}), \g^{-1}).$$ The unit space of $\mG\rtimes_{\phi}\G$ is $\mG^{(0)}\times \{\varepsilon\}$. The source map $s:\mG\rtimes_{\phi}\G\xra \mG^{(0)}\times \{\varepsilon\}$ is given by $s(g,\g)=(s(\phi_{\g^{-1}}(g)), \varepsilon)$ and the range map $r:\mG\rtimes_{\phi}\G\xra \mG^{(0)}\times \{\varepsilon\}$ is given by $r(g,\g)=(r(g), \varepsilon)$ for $g\in\mG$ and $\g\in\G$. The semi-direct product $\mG\rtimes_{\phi}\G$ has a natural $\G$-grading given by the cocycle $c: \mG\rtimes_{\phi}\G\xra\G$,  $c(g,\g)=\g$. 

If $\mG$ is an ample groupoid and $\phi:\Gamma\curvearrowright \mG$ is an action of a discrete group $\G$ on $\mG$, then 
$\mG\rtimes_{\phi}\G$  is again ample under the product topology on $\mG\times \G$. We will determine the Steinberg algebras of these construction in~\S\ref{intheairfaroe}. 

\end{example}

\begin{example}[{\it Groupoid of a dynamical system}]\label{dynsysgr}
Let $X$ be a locally compact Hausdorff space and $\sigma:X\rightarrow X$ a local homeomorphism. Further suppose that $d:X\rightarrow \Gamma$ is a continuous map. We then have a $\G$-graded groupoid defined as follows:
\[
\mG(X,\sigma) := \Big \{ (x,s,y) \mid \sigma^n(x)=\sigma^m(y) \text{ and } s=\prod_{i=0}^{n-1}d(\sigma^i(x)) \prod_{j=0}^{m-1}d(\sigma^j(y))^{-1}, n,m \in \N \Big \}.
\]
Here the cocyle is defined by $\mG(X,\sigma) \rightarrow \G, (x,s,y)\mapsto s$. 

Let $E$ be a finite graph with no sinks and sources, and $E^\infty$ the set of all infinite paths in $E$.  Let $\sigma: E^\infty \rightarrow E^\infty; e_1e_2e_3\dots \mapsto e_2e_3\dots$ and the constant map $d:E^\infty \rightarrow \Z, x\mapsto 1$. Then we have $\mG(E^\infty,\sigma)=\mG_E$, where $\mG_E$ is the graph groupoid associated to the graph $E$ (see Section \ref{sectionsix}).

\end{example}

\subsection{Graded matrix groupoids}\label{subsection22} 
Recall from \S\ref{frankairport} that if a ring $A$ is $\G$-graded, then for a collection $\{\g_1,\dots,\g_n\}$ of elements of $\G$, we have a $\G$-graded matrix ring  
$\M_n(A)(\g_1,\dots,\g_n)$.

In this section we develop the parallel concept in the setting of groupoids, namely graded matrix groupoids with a given shift. We will see that the Steinberg algebra of a graded matrix groupoid coincides with the graded matrix ring of the Steinberg algebra with the same shift (see Proposition ~\ref{stegrrecon}).


Let $\mG$ be an \'etale groupoid.
For $f:\mG^{(0)}\xra \Z$ a map with $f\geq 0$, where $\Z$ is the discrete abelian group of integers,
we let $$\mG_f=\{x_{ij}\;|\; x\in \mG, \, 0\leq i\leq f(r(x)), 0\leq j\leq f(s(x))\},$$ and equip $\mG_f$ with the induced topology from the product topology on $\mG\times \Z\times \Z$. We endow $\mG_f$ with the groupoid structure as follows:
$$\mG_f^{(0)}=\{x_{ii}\;|\; x\in \mG^{(0)}, 0\leq i\leq f(x)\},$$ $(x_{ij})^{-1}=(x^{-1})_{ji}$, two elements are $x_{ij}$ and $y_{kl}$ are composable if and only if $s(x)=r(y)$, $j=k$ and the product is given by $x_{ij}y_{jl}=(xy)_{il}$. We call $\mG_f$ the \emph{matrix groupoid} of $\mG$ with respect to $f$. 

Suppose that $\mG$ is a $\G$-graded groupoid with the grading map $c:\mG\xra \G$ and $f:\mG^{(0)}\xra\Z$ satisfies $f\geq 0$. Fix a continuous map $\psi: \mG_f^{(0)}\xra \G$. Define  a map 
\begin{align}
\label{gradingmap}
c_{\psi}: \mG_f & \longrightarrow \G,\\
 x_{ij}& \longmapsto \psi(r(x)_{ii})c(x)\psi(s(x)_{jj})^{-1}. \notag
\end{align} 
We claim that $c_{\psi}$ is a continuous cocycle for $\mG_f$ and thus $\mG_f$ is $\G$-graded. Indeed, we observe that $c_{\psi}$ is the composition map 
$$\mG_f \stackrel{\mu}\longrightarrow\mG_f^{(0)}\times \G\times \mG_f^{(0)} \stackrel{\nu}\longrightarrow\G,$$ where $\mu(x_{ij})=(r(x)_{ii}, c(x), s(x)_{jj})$ for $x_{ij}\in\mG_f$ and $\nu(y_{kk}, \g, z_{ll})=\psi(y_{kk})\g\psi(z_{ll})^{-1}$ for $y_{kk}, z_{ll}\in\mG_f^{(0)}$ and $\g\in\G$. Since $\psi$, the source and range maps for the groupoid $\mG_f$ are continuous, $\mu$ and $\nu$ are continuous. Thus $c_{\psi}$ is continuous. Since the grading map $c_{\psi}$ of the $\G$-graded groupoid $\mG_f$ is related to the continuous map $\psi$, we denote the $\G$-graded groupoid $\mG_f$ by $\mG_f(\psi)$.

For a locally compact Hausdorff space $X$ and a toplogical group $G$, we denote by $C_c(X, G)$ the set of $G$-valued continuous functions with compact support.

For an \'etale groupoid $\mG$, we denote $s^{-1}(X)\cap r^{-1}(X)$ by $\mG|_{X}$ for a subset $X\sub \mG^{(0)}$. A subset $F\sub \mG^{(0)}$ is said to be \emph{$\mG$-full}, if $r^{-1}(x)\cap s^{-1}(F)$ is not empty for any $x\in \mG^{(0)}$.

The following lemma is the graded version of \cite[Lemma 4.3]{matui}.

\begin{lemma} \label{gradedversion}Let $\mG$ be an \'etale $\G$-graded groupoid, graded by the cocyle $c:\mG\xra \G$, whose unit space is compact and totally disconnected and let $Y\sub \mG^{(0)}$ be a $\mG$-full clopen subset. There exist $f\in C_c(Y, \Z)$ and a continuous map $\psi: (\mG|_Y)_f^{(0)}\xra \G$ such that $\pi: (\mG|_Y)_f\xra \mG$ satisfying $\pi(x_{00})=x$ for all $x\in \mG|_Y$ is a graded isomorphism. 
\end{lemma}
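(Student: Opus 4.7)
The plan is to reduce to the non-graded analogue, namely \cite[Lemma~4.3]{matui}, and then equip the resulting matrix groupoid with the right cocycle transported from $\mG$. Forgetting the grading on $\mG$, that lemma produces $f\in C_c(Y,\Z)$ with $f\geq 0$ together with a topological-groupoid isomorphism $\pi\colon (\mG|_Y)_f \to \mG$ satisfying $\pi(x_{00})=x$ for every $x\in\mG|_Y$. The remaining job is to manufacture a continuous $\psi\colon (\mG|_Y)_f^{(0)}\to\G$ whose induced cocycle $c_\psi$ from~\eqref{gradingmap} equals the pull-back $c\circ\pi$; once this is done, $\pi$ automatically becomes a graded isomorphism.

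For each unit $y_{ii}\in(\mG|_Y)_f^{(0)}$, observe that $y_{i0}$ is the canonical morphism of $(\mG|_Y)_f$ from $y_{00}$ to $y_{ii}$ (concretely, the point $(y,i,0)\in\mG\times\Z\times\Z$), and define
\begin{equation*}
\psi(y_{ii}) := c\bigl(\pi(y_{i0})\bigr).
\end{equation*}
Because $\pi(y_{00})=y\in\mG^{(0)}\subseteq\mG_\varepsilon$, one has $\psi(y_{00})=\varepsilon$. Using the composition and inverse rules of $\mG_f$, every morphism factors inside $(\mG|_Y)_f$ as $x_{ij}=r(x)_{i0}\cdot x_{00}\cdot\bigl(s(x)_{j0}\bigr)^{-1}$. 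Applying the groupoid homomorphism $\pi$ and then the cocycle $c$ therefore yields
\begin{equation*}
c\bigl(\pi(x_{ij})\bigr) \;=\; c\bigl(\pi(r(x)_{i0})\bigr)\, c(x)\, c\bigl(\pi(s(x)_{j0})\bigr)^{-1} \;=\; \psi(r(x)_{ii})\, c(x)\, \psi(s(x)_{jj})^{-1} \;=\; c_\psi(x_{ij}),
\end{equation*}
which is exactly the equality required.

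The only nontrivial check is continuity of $\psi$. Since $Y$ is clopen in the compact space $\mG^{(0)}$, it is itself compact, so the locally constant function $f$ takes only finitely many values, and the sets $\{y_{ii}:y\in f^{-1}(n)\}$ with $0\leq i\leq n$ partition $(\mG|_Y)_f^{(0)}$ into finitely many clopen pieces. On each such piece the assignment $y_{ii}\mapsto y_{i0}$ is continuous in the subspace topology inherited from $\mG\times\Z\times\Z$, and composing with the continuous maps $\pi$ and $c$ into the discrete group $\G$ renders $\psi$ locally constant on each piece, hence continuous. There is no substantial obstacle beyond this bookkeeping: the geometric heart of the argument---cutting $\mG^{(0)}$ into finitely many pieces via compact open bisections with sources in $Y$---is wholly absorbed into Matui's non-graded lemma, and the graded structure is recovered afterwards by the explicit formula for $\psi$.
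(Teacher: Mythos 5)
Your proof is correct and follows essentially the same route as the paper: both reduce to Matui's non-graded \cite[Lemma~4.3]{matui} and then transport the cocycle by setting $\psi(y_{ii})=c(\theta(y_{ii}))$, where your $\pi(y_{i0})$ is exactly the paper's section $\theta(y_{ii})$. The only differences are presentational: you obtain continuity of $\psi$ from the continuity of the topological isomorphism $\pi$ (together with the clopen finite partition of $(\mG|_Y)_f^{(0)}$ coming from compactness of $Y$), whereas the paper instead verifies continuity of $\theta$ directly by analysing preimages of bisections; and your factorization $x_{ij}=r(x)_{i0}\cdot x_{00}\cdot (s(x)_{j0})^{-1}$ makes explicit the check that $c_\psi=c\circ\pi$, which the paper simply declares evident.
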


\begin{proof} Let $X=\mG^{(0)}$. There are compact open bisections $V_1, V_2, \cdots, V_n$ such that $s(V_j)\sub Y$ for each $j=1, \cdots, n$,  $r(V_1), r(V_2), \cdots, r(V_n)$ are mutually disjoint and their union is equal to $X\setminus Y$ (see \cite[Lemma 4.3]{matui}). For each subset $\lambda\sub \{1,2, \cdots, n\}$ we fix a bijection $\alpha_{\lambda}: \{k\;|\;1\leq k\leq |\lambda|\}\xra \lambda$. For $y\in Y$ put $$\lambda(y)=\{k\in \{1,2, \cdots, n\}\;|\; y\in s(V_k)\}.$$ We define $f\in C_c(Y, \mathbb{Z})$ by $f(y)=|\lambda(y)|$. Since each $s(V_k)$ is clopen, $f$ is continuous. A map $\theta: (\mG|_Y)_f^{(0)}\xra \mG$ is defined by 
\begin{equation*}
\theta(y_{ii})=
\begin{cases} y, & \text{if~} i=0;\\
(s|_{V_l})^{-1}(y) & \text{otherwise},
\end{cases} 
\end{equation*}
where $l=\alpha_{\lambda(y)}(i)$ (see \cite[Lemma 4.3]{matui}).

We claim that $\theta:(\mG|_Y)_f^{(0)}\xra \mG$ defined above is continuous. Recall that the collection of all the bisections of the \'etale groupoid $\mG$ forms a basis 
for its topology (see \cite[Proposition 3.5]{exel2008} and \cite[Proposition 3.4]{st}). It suffices to show that for any open bisection $T$ of $\mG$, $\theta^{-1}(T)$ is open. Observe that $$\theta^{-1}(T)=\theta^{-1}(T\cap \cup_{j=1}^nV_j) \cup (T\cap Y)_{00},$$ where $(T\cap Y)_{00}=\{x_{00}\;|\; x\in T\cap Y\}$. Indeed, clearly 
$$\theta^{-1}(T\cap \cup_{j=1}^nV_j) \cup (T\cap Y)_{00}\sub \theta^{-1}(T).$$ 
On the other hand, take any $y_{ii}\in \theta^{-1}(T)$. If $i=0$, then $\theta(y_{00})=y\in T\cap Y$, implying $y_{00}\in (T\cap Y)_{00}$. If $i\neq 0$, then $\theta(y_{ii})=(s|_{V_l})^{-1}(y)\in V_l$ with $l=\alpha_{\lambda(y)}(i)$. Since $\theta(y_{ii})$ belongs to $T$, we have $\theta(y_{ii})\in T\cap \cup_{j=1}^nV_j$, implying $y_{ii}\in \theta^{-1}(T\cap \cup_{j=1}^nV_j)$. Thus $$\theta^{-1}(T)\sub\theta^{-1}(T\cap \cup_{j=1}^nV_j) \cup (T\cap Y)_{00}.$$ Since $(T\cap Y)_{00}$ is an open subset of $(\mG|_Y)_f^{(0)}$, it suffices to show that $\theta^{-1}(T\cap \cup_{j=1}^nV_j)$ is an open subset of $(\mG|_Y)_f^{(0)}$. In fact, we have 
\begin{equation*}
\begin{split}
\theta^{-1}(T\cap \cup_{j=1}^nV_j)
=\big((T\cap \cup_{j=1}^nV_j)\cap Y\big)_{00}
\sqcup\sqcup_{j=1}^n s(T\cap V_j)_{11}
\end{split}
\end{equation*} with $s(T\cap V_j)_{11}=\{z_{11}\;|\; z\in s(T\cap V_j)\}$, since for each element $x\in T\cap \cup_{j=1}^nV_j$,  we have $f(s(x))=1$. 

 Let $\psi$ be the composition map $(\mG|_Y)_f^{(0)}\xrightarrow[]{\theta} \mG\xrightarrow[]{c}\G$. By \eqref{gradingmap}, $\mG_f$ is $\G$-graded. By the proof of  \cite[Lemma 4.3]{matui}, there is an isomorphism $\pi: (\mG|_Y)_f\xra \mG$ such that $\pi(x_{ij})=\theta(r(x)_{ii})\cdot g\cdot \theta(s(x)_{jj})^{-1}$. It is evident that the isomorphism $\pi: (\mG|_Y)_f\xra \mG$ preserves the grading. The proof is completed.
\end{proof}

Let $\G$ and $\Z$ be discrete groups, $\mG$ a $\G$-graded groupoid with $c: \mG\xra \G$ the grading map and  $f\in C_c(\mG^{(0)}, \Z)$. Suppose that $\sigma:\Z\xra \Z$ is a bijection such that for any $x\in \mG^{(0)}$, $\sigma (D_x)=D_x$ where $D_x=\{0, 1,\cdots, f(x)\}$. Suppose that $\psi: \mG_f^{(0)}\xra \G$ is a continuous map. Let $\psi_{\sigma}:\mG_f^{(0)}\xra \G$ be the continuous map given by $\psi_{\sigma}(x_{ii})=\psi(x_{\sigma(i)\sigma(i)})$ for any $x_{ii}\in \mG_f^{(0)}$.

In the following theorem, by $Z(\Gamma)$ we mean the centre of the group $\Gamma$. 

\begin{prop} Keep the above notation. We have 

\begin{enumerate}[\upshape(1)]
\item$\mG_f(\psi)\cong\mG_f(\psi_{\sigma})$ as $\G$-graded groupoids;
\smallskip

\item Suppose that $f:\mG^{(0)}\xra \Z$ is a constant function such that $f(x)=n$ with $n $ a positive integer for any $x\in \mG^{(0)}$. Let $\tau$ be a permutation of $\{0, 1, \cdots, n\}$. Let $\psi:\mG_f^{(0)}\xra \G$ be a continuous map and $\psi_{\tau}:\mG_f^{(0)}\xra \G$ be given by $\psi_{\tau}(x_{ii})=\psi(x_{\tau(i)\tau(i)})$ for any $x_{ii}\in \mG_f^{(0)}$. Then $\mG_f(\psi)\cong\mG_f(\psi_{\tau})$ as $\G$-graded groupoids;

\smallskip

\item Let $\g\in Z(\G)$ and $\psi: \mG_f^{(0)}\xra \G$ a continuous map. Let $\psi_{\g}:\mG_f^{(0)}\xra \G$ be the composition of $\psi$ and the multiplication map $\G\xra\G, \beta\mapsto \g\beta$ with $\beta\in \G$. Then $\mG_f(\psi)\cong\mG_f(\psi_{\g})$ as $\G$-graded groupoids.
\end{enumerate}
\end{prop}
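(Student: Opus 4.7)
My plan is to exhibit explicit continuous graded isomorphisms in each case, with the verifications reducing to substitution into the formula \eqref{gradingmap} for the cocycle $c_\psi$. Parts (2) and (3) will fall out quickly once (1) is in hand.

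For (1), the natural candidate is $\Phi: \mG_f(\psi_\sigma) \to \mG_f(\psi)$ defined by $\Phi(x_{ij}) = x_{\sigma(i)\sigma(j)}$. The hypothesis $\sigma(D_x) = D_x$ for every $x \in \mG^{(0)}$ is precisely what makes $\Phi$ well-defined on the index set, and the obvious inverse is $x_{ij} \mapsto x_{\sigma^{-1}(i)\sigma^{-1}(j)}$. Compatibility of $\Phi$ with the source, range, product, and inversion on the matrix groupoid follows immediately from the defining formulas $s(x_{ij}) = s(x)_{jj}$, $r(x_{ij}) = r(x)_{ii}$, $x_{ij}y_{jk} = (xy)_{ik}$, and $(x_{ij})^{-1} = (x^{-1})_{ji}$. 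Continuity is automatic since the topology on $\mG_f$ is inherited from $\mG \times \Z \times \Z$ with $\Z$ discrete, and $\Phi$ acts as the identity on the $\mG$-factor and by a fixed bijection on the discrete factors. The grading is preserved by the one-line identity
\[
c_\psi(\Phi(x_{ij})) = \psi(r(x)_{\sigma(i)\sigma(i)})\, c(x)\, \psi(s(x)_{\sigma(j)\sigma(j)})^{-1} = c_{\psi_\sigma}(x_{ij}),
\]
where the last equality is just the definition $\psi_\sigma(y_{kk}) = \psi(y_{\sigma(k)\sigma(k)})$.

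For (2), when $f$ is the constant function $n$, every $D_x$ is the same set $\{0, 1, \ldots, n\}$, so any permutation $\tau$ of this set extends by the identity to a bijection $\sigma$ of $\Z$ satisfying $\sigma(D_x) = D_x$, with $\psi_\sigma = \psi_\tau$; then (1) applies directly. For (3), the identity map on $\mG_f$ is already a graded isomorphism: using centrality of $\gamma$ in $\G$,
\[
c_{\psi_\gamma}(x_{ij}) = \gamma\, \psi(r(x)_{ii})\, c(x)\, \psi(s(x)_{jj})^{-1}\, \gamma^{-1} = \gamma\, c_\psi(x_{ij})\, \gamma^{-1} = c_\psi(x_{ij}),
\]
so the two cocycles coincide on the nose and no nontrivial map is needed.

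No substantive obstacle arises; the only conceptual points worth flagging are the well-definedness of $\Phi$ in (1), which is exactly what the constraint $\sigma(D_x) = D_x$ is there to supply, and the appeal to centrality in (3) which is what allows the conjugation $\gamma (-) \gamma^{-1}$ to collapse so that the identity map suffices.
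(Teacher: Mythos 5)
Your proof is correct and follows essentially the same route as the paper: the paper's isomorphism in (1) is $x_{ij}\mapsto x_{\sigma^{-1}(i)\sigma^{-1}(j)}$, i.e.\ the inverse of your $\Phi$, and (2) and (3) are handled identically (reduction to (1) via extension of $\tau$, and the identity map using centrality of $\g$, respectively). Your write-up merely supplies the routine verifications that the paper leaves to the reader.
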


\begin{proof} (1) Observe that $\mG_f(\psi)\xra \mG_f(\psi_{\sigma}), x_{ij}\mapsto x_{\sigma^{-1}(i)\sigma^{-1}(j)}$ is the desired $\G$-graded groupoid isomorphism. For (2), we observe that in this case $D_x=\{0, 1, \cdots, n\}=$ for all $x\in\mG^{(0)}$. Let $\sigma:\Z\xra \Z$ be a  bijection such that $\sigma(\{0, 1, \cdots, n\})=\{0, 1, \cdots, n\}$. Applying (1), the result follows directly. For (3), $\mG_f(\psi)\xra \mG_f(\psi_{\g}), x_{ij}\mapsto x_{ij}$ is a $\G$-graded groupoid isomorphism. 
\end{proof}

\subsection{Strongly graded groupoids}\label{stgrgroupids}

Let $\G$ be a group with identity $\varepsilon$ and let $\mG$ be a $\G$-graded groupoid, graded by the cocycle $c: \mG\xra \G$. Parellel to the classical graded ring theory, we say $\mG$ is a {\it strongly $\G$-graded groupoid} if $\mG_\alpha \mG_\beta=\mG_{\alpha \beta}$  for all $\alpha,\beta \in \G$. In \cite{clarkhazratrigby} such groupoids were systematically studied. It was  shown that  $\mG$ is strongly graded if and only if $s(\mG_\alpha)=\mG^{(0)}$ if and only if $r(\mG_\alpha)=\mG^{(0)}$, for all $\alpha \in \Gamma$ (see \cite[Lemma 2.3]{clarkhazratrigby}). When $\mG$ is strongly graded, the $\varepsilon$-component of $\mG$, namely, $\mG_\varepsilon$, contains substantial information about the groupoid $\mG$. In \cite[Theorem 2.5]{clarkhazratrigby} it was proved that an ample $\G$-graded groupoid $\mG$ is strongly graded if and only if the Steinberg algebra $A_R(\mG)$ is strongly $\G$-graded. 

\subsection{Skew-product of groupoids}\label{subsection23}

In this subsection we first demonstrate that for the $\G$-graded groupoid $\mG$, graded by the cocycle $c:\mG\xra \G$, the skew-product $\mG \times_c \G$ takes the role of the smash product in the graded ring theory as described in \S\ref{onboardtodenver}. Namely we prove that 
\begin{equation*}
\Gr_R \mG  \cong \Modd_R \mG\times_c \G,
\end{equation*}
where $\Gr_R \mG$ is the category of graded $\mG$-sheaves of $R$-modules with graded morphisms and $\Modd_R \mG$ is the category  of $\mG$-sheaves of $R$-modules (see \S\ref{nesfdte} and \eqref{keymom}). Comparing this with the situation in ring theory given in \eqref{cohenmont}
\begin{equation*}
 \Gr A\longrightarrow \Modd A\#\G,
\end{equation*}
leads us to define the graded homology theory of a $\G$-graded groupiod $\mG$ as the homology theory of skew-product $\mG\times_c\G$ (see Definition~\ref{homology}).

\begin{deff} \label{defsp}
Let $\mG$ be a locally compact Hausdorff groupoid, $\G$ a discrete group and $c : \mG \to \G$ a cocycle. The \emph{skew-product} of $\mG$ by $\G$, denoted by $\mG \times_c \G$,  is the groupoid $\mG
\times \G$ such that $(x,\a)$ and $(y, \b)$ are composable if $x$ and $y$ are
composable and $\a=c(y)\b$. The composition is then given by $$\big (x,
c(y)\b\big)\big(y,\b\big)=(xy, \b)$$ with the inverse $$(x, \a)^{-1}=(x^{-1},c(x)\a).$$ 
\end{deff}

The source map $s:\mG\times_c \G\xra \mG^{(0)}\times \G$ and the range map $r:\mG\times_c \G\xra \mG^{(0)}\times \G$ of the skew-product $\mG\times_c \G$ are given by $s(x, \a)=(s(x), \a)$ and $r(x, \a)=(r(x), c(x)\a)$ for $(x,\a)\in \mG\times_c \G$.

The skew-product $\mG\times_c \G$ is
a $\Gamma$-graded topological groupoid under the product topology on $\mG \times \G$ and with
degree map $\tilde{c}(x,\gamma) := c(x)$.

\begin{rmk} Note that our convention for the composition of the skew-product here is slightly
different from that in \cite[Definition 1.6]{renault} and \cite[\S3.2]{matui}. Since in Section \ref{sectionsix} we will use the representation of the monoid of graded finitely generated projective modules over a Leavitt path algebra which was produced in \cite[\S5]{ahls},  our convention for the composition of the skew-product is the same as that in \cite[Definition 3.2]{ahls}.\end{rmk}

Next we describe the bisections of a skew-product groupoid as they are needed in the sequel. For a $\G$-graded ample groupoid $\mG$, we observe that a graded compact open bisection $U\in B_\a^{\rm co}(\mG\times_c \G)$ with $\a\in\G$ can be written as 
\begin{equation}
\label{normform}
U = \bigsqcup^l_{i=1} U_i \times \{\g_i\},
\end{equation} 
with $\g_i\in\G$ distinct and $U_i$ an $\alpha$-graded compact open bisection of $\mG$. Indeed, if $U \subseteq \tilde{c}^{-1}(\alpha)$, then for
each $\g\in\G$, the set $U \cap \mG \times \{\g\}$ is a compact open bisection. Since
these are mutually disjoint and $U$ is compact, there are finitely many (distinct) $\g_1,
\dots, \g_l \in \G$ such that \[U = \bigsqcup^l_{i=1} U \cap (\mG \times \{\g_i\}).\] Each $U
\cap \mG \times \{\g_i\}$ has the form $U_i \times \{\g_i\}$, where $U_i \subseteq \mG$ is
compact open. The $U_i$ have mutually disjoint sources because the source map on $\mG
\times_c \G$ is just $s \times \operatorname{id}$, and $U$ is a bisection. So each $U_i
\in B^{\rm co}_\alpha(\mG)$, and $U = \bigsqcup^l_{i=1} U_i \times \{\g_i\}$. We observe that \begin{equation*}
s(U)=\bigsqcup_{i=1}^l s(U_i)\times \{\g_i\} \text{~~~~and~~~~} r(U)=\bigsqcup_{i=1}^l r(U_i)\times \{\a\g_i\}.
\end{equation*}

Recall that for a locally compact, Hausdorff groupoid $\mG$, we say that $\mG$ is \emph{effective} if the interior $\Iso(\mG)^\circ$ of $\Iso(\mG)$ is $\mG^{(0)}$. We say that $\mG$ is \emph{principal} if $\Iso(\mG)=\mG^{(0)}$ (\cite{cep}).

\begin{lemma}\label{ghghgrt} Let $\mG$ be a locally compact Hausdorff $\Gamma$-graded groupoid,  with the cocycle $c:\mG\rightarrow \G$.  We have $\Iso(\mG \times_c \G)=\Iso(\mG_\varepsilon)\times \G$. Thus the groupoid  $\mG_\varepsilon$ is effective if and only if $\mG \times_c \G$ is effective. 
Furthermore, $\mG_\varepsilon$ is principal if and only if $\mG \times_c \G$ is principal.
\end{lemma}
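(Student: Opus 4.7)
The plan is to first establish the identity $\Iso(\mG\times_c\G)=\Iso(\mG_\varepsilon)\times\G$ by a direct unpacking of definitions, then derive both the effective and principal statements as essentially immediate consequences, using the fact that $\G$ carries the discrete topology.

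First I would fix $(x,\a)\in\mG\times_c\G$ and use the source and range formulas recorded just after Definition~\ref{defsp}: $s(x,\a)=(s(x),\a)$ and $r(x,\a)=(r(x),c(x)\a)$. Requiring $(x,\a)\in\Iso(\mG\times_c\G)$ is therefore equivalent to the two simultaneous conditions $s(x)=r(x)$ and $\a=c(x)\a$; the second, upon right multiplication by $\a^{-1}$, forces $c(x)=\varepsilon$, i.e.\ $x\in\mG_\varepsilon$. Combined with $s(x)=r(x)$, this says precisely $x\in\Iso(\mG)\cap\mG_\varepsilon$. Since $\mG^{(0)}\subseteq\mG_\varepsilon$, the unit space of the subgroupoid $\mG_\varepsilon$ coincides with $\mG^{(0)}$, and hence $\Iso(\mG)\cap\mG_\varepsilon=\Iso(\mG_\varepsilon)$. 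This yields the identity $\Iso(\mG\times_c\G)=\Iso(\mG_\varepsilon)\times\G$, and also shows that the unit space of the skew-product is $\mG^{(0)}\times\G$.

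From here the principal statement is immediate: $\mG\times_c\G$ is principal precisely when $\Iso(\mG_\varepsilon)\times\G=\mG^{(0)}\times\G$, which holds iff $\Iso(\mG_\varepsilon)=\mG^{(0)}$, i.e.\ iff $\mG_\varepsilon$ is principal. For the effective statement, I would observe that because $\G$ is discrete, for any subset $A\subseteq\mG$ the interior of $A\times\G$ in the product topology on $\mG\times\G$ equals $A^\circ\times\G$, where $A^\circ$ denotes the interior of $A$ in $\mG$. Applying this to $A=\Iso(\mG_\varepsilon)$ and using that $\mG_\varepsilon=c^{-1}(\varepsilon)$ is clopen in $\mG$ (so the interior of $\Iso(\mG_\varepsilon)$ taken in $\mG$ agrees with its interior taken in the subspace topology of $\mG_\varepsilon$, which is the topology of the groupoid $\mG_\varepsilon$), I obtain
\[
\Iso(\mG\times_c\G)^\circ=\Iso(\mG_\varepsilon)^\circ\times\G.
\]
Thus $\Iso(\mG\times_c\G)^\circ=\mG^{(0)}\times\G$ if and only if $\Iso(\mG_\varepsilon)^\circ=\mG^{(0)}$, which is exactly the statement that $\mG_\varepsilon$ is effective.

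The only point requiring a small bit of care, and the mild obstacle to watch for, is the identification of interiors across the three ambient spaces $\mG_\varepsilon$, $\mG$, and $\mG\times_c\G$; but the discreteness of $\G$ together with the clopenness of $\mG_\varepsilon$ in $\mG$ makes this routine, so no substantive difficulty arises.
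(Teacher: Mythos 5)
Your proposal is correct and follows essentially the same route as the paper: compute $s(x,\a)$ and $r(x,\a)$ from the formulas after Definition~\ref{defsp}, equate them to get $s(x)=r(x)$ and $c(x)=\varepsilon$, and deduce the identity $\Iso(\mG\times_c\G)=\Iso(\mG_\varepsilon)\times\G$. The paper dismisses the effective and principal claims with ``the rest follows easily,'' whereas you supply the (correct) details, in particular the interior computation $\Iso(\mG\times_c\G)^\circ=\Iso(\mG_\varepsilon)^\circ\times\G$ using discreteness of $\G$ and clopenness of $\mG_\varepsilon$.
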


\begin{proof}
Let $(x,\alpha) \in \Iso(\mG \times_c \G)$. Then $(s(x),\alpha)=s(x,\alpha)=r(x,\alpha)=(r(x),c(x)\alpha)$. It immediately follows that 
$x\in \Iso(\mG_{\varepsilon})$ and so $\Iso(\mG \times_c \G) \subseteq \Iso(\mG_\varepsilon)\times \G$. The reverse inclusion is immediate. The rest of Lemma follows easily. 
\end{proof}

\subsection{Groupoid spaces and groupoid equivalences}
In this subsection we recall the concepts of imprimitive groupoids, linking groupoids and equivalence spaces between two groupoids. In \S\ref{grgrim} we develop the graded version of these concepts. 

Let $\mG$ be a locally compact groupoid with Hausdorff unit space and $X$ a locally compact space. We say \emph{$\mG$ acts on the left} of $X$ if there is a map $r_X$ from $X$ onto $\mG^{(0)}$ and a map $(\g, x)\mapsto \g\cdot x$ from $$\mG*X:=\{(\g, x)\in \mG\times X\;|\; s(\g)=r_X(x)\}$$ to $X$ such that 

\begin{itemize}
\item[(1)] if $(\eta, x)\in \mG*X$ and $(\g,\eta)$ is composable in $\mG$, then $(\g\eta, x), (\g,\eta\cdot x)\in \mG*X$ and $\g\cdot(\eta\cdot x)=(\g\eta)\cdot x$;

\smallskip

\item[(2)] $r_X(x)\cdot x=x$ for all $x\in X$.
\end{itemize} We call $X$ a \emph{continuous left $\mG$-space} if $r_X$ is an open map and both $r_X$ and $(\g, x)\mapsto \g\cdot x$ are continuous. The action of $\mG$ on $X$ is \emph{free} if $\g\cdot x=x$ implies $\g=r_X(x)$. It is \emph{proper} if the map from $\mG*X\to X\times X$ given by $(\g, x)\mapsto (\g\cdot x, x)$ is a proper map in the sense that inverse images of compact sets are compact. Right actions can be defined similarly, writing $s_X$ for the map from $X$ to $\mG^{(0)}$, and $$X*\mG:=\{(x, \g)\in X\times \mG\;|\; s_X(x)=r(\g)\}.$$

Suppose $X$ is a left $\mG$-space. Define the \emph{orbit equivalence relation} on $X$ determined by $\mG$ to be $x \sim y$ if and only if there exists $\g\in\mG$ such that $\g\cdot x =y$. The quotient space with respect to this relation is denoted $\mG\setminus X$, the elements of $\mG\setminus X$ are denoted $[x]$, and the canonical quotient map is denoted by $q$. When $X$ is a continuous $\mG$-space we will give $\mG\setminus X$ the quotient topology with respect to $q$. When $X$ is a right $\mG$-space the orbit equivalence relation is defined similarly and we will use exactly the same notation.
In some cases $X$ will be both a left $\mG$-space and a right $\HH$-space, where $\mG$ and $\HH$ are groupoids, and in this situation we will denote the orbit space with respect to the $\mG$-action by $\mG\setminus X$ and the orbit space with respect to the $\HH$-action by $X/\HH$.

\begin{deff} Let $\mG$ and $\HH$ be locally compact groupoids. A locally compact space $Z$ is called a \emph{$(\mG, \HH)$-equivalence} if 

\begin{enumerate}[\upshape(1)]
\item $Z$ is a free and proper left $\mG$-space;

\smallskip

\item $Z$ is a free and proper right $\HH$-space;

\smallskip

\item the actions of $\mG$ and $\HH$ on $Z$ commute;

\smallskip

\item $r_Z$ induces a homeomorphism of $Z/\HH$ onto $\mG^{(0)}$;

\smallskip

\item $s_Z$ induces a homeomorphism of $\mG\setminus Z$ onto $\HH^{(0)}$.
\end{enumerate}
\end{deff}

 Let $\HH$ be a locally compact Hausdorff groupoid, and let $Z$ be a free and proper right $\HH$-space. Then $\HH$ acts diagonally on 
\begin{equation}
\label{defzz}
Z *_s Z=\{(x, y)\in Z\times Z\;|\; s(x)=s(y)\},
\end{equation}
and we define $Z^{\HH}= (Z *_s Z)/\HH$. Two equivalence classes $[z,w]_{\HH},[x,y]_{\HH}\in Z^{\HH }$ are composable when $[w] = [x]$ in $Z/\HH$, and we define
$[z,w]_{\HH} [w,y]_{\HH}= [z,y]_{\HH}$ and $[z,w]_{\HH}^{-1} = [w,z]_{\HH}.$
It can be checked that these operations make $Z^{\HH}$ into a locally compact Hausdorff groupoid (see \cite[Proposition 1.92]{goehle}). Furthermore, the range and source maps are given by
$r([z,w]_{\HH}) = [z,z]_{\HH}$ and $s([z,w]_{\HH}) = [w,w]_{\HH}$,
which allows us to identify the unit space of $Z^{\HH}$ with $Z/\HH$. The groupoid $Z^{\HH}$ is called the \emph{imprimitivity groupoid} associated to the $\HH$-space $Z$. The imprimitivity groupoid associated to a left action is defined analogously. The groupoid $Z^{\HH}$ admits a natural left action on $Z$, which makes $Z$ into a $(Z^{\HH}, \HH)$-equivalence (see \cite[Proposition 1.93]{goehle}). Thus any free and proper groupoid space gives rise to a groupoid equivalence in a canonical way.  Indeed, this construction is prototypical: if $Z$ is a $(\mG,\HH)$-equivalence, then $Z^{\HH}$ and $\mG$ are naturally isomorphic via the map
\begin{equation}
\label{isogroupoid}
[z, w]_{\HH} \longmapsto _{\mG}[z, w],
\end{equation} 
where $_{\mG}[z, w] \in \mG$ is the unique element satisfying $_{\mG}[z, w] \cdot w = z$ (see \cite[Proposition 1.94]{goehle}).

Given a $(\mG, \HH)$-equivalence $Z$, define the opposite space $Z^{\rm op}=\{\overline{z} : z\in Z\}$ to be a homeomorphic copy of $Z$, but let $\mG$ and $\HH$ act on the right and left of $Z^{\rm op}$ as follows:
   $$r(\overline{z})=s(z), s(\overline{z})=r(z), \eta\cdot \overline{z}=\overline{z\cdot \eta^{-1}}, \overline{z}\cdot y=\overline{y^{-1}\cdot z}$$
for $\eta \in H, y\in\mG$. It is straightforward to check that this makes $Z^{\rm op}$ into an $(\HH, \mG)$-equivalence. We then define the \emph{linking groupoid} to be the topological disjoint union $$L= \mG \sqcup Z \sqcup Z^{\rm op} \sqcup \HH $$ with 
$r,s: L\xra L^{(0)}: =\mG^{(0)} \sqcup \HH^{(0)}$ inherited from the range and source maps on each of $\mG, \HH, Z$ and $Z^{\rm op}$.
The set of composable pairs is
$L^{(2)} =\{(x,y)\in L\times L: s(x)=r(y)\}$ and multiplication $(k, l)\mapsto kl$ in $L$ is given by
\begin{itemize}
\item multiplication in $\mG$ and $\HH$ when $(k, l)$ is a composable pair in $\mG$ or $\HH$;

\smallskip

\item $kl=k\cdot l$ when $(k, l)\in (Z* H)\sqcup (\mG*Z)\sqcup (H*Z^{\rm op})\sqcup (Z^{\rm op}*\mG)$;

\smallskip

\item $kl=_{\mG}[k, h]$ if $k\in Z$ and $l=\overline{h}\in Z^{\rm op}$, and $kl=[h, l]_{\HH}$ if $j\in Z$ and $k=\overline{h}\in Z^{\rm op}$.
\end{itemize} The inverse map is the usual inverse map in each of $\mG$ and $\HH$ and is given by $z\mapsto \overline{z}$ on $Z$ and $\overline{z}\mapsto z$ on $Z^{\rm op}$. It is shown in \cite[Lemma 2.1]{sw} that these operations make $L$ into a locally compact Hausdorff groupoid.

\subsection{Graded groupoid equivalences}\label{grgrim}
Let $\G$ be a group and let $\mG$ be a $\G$-graded groupoid, graded by the cocycle $c: \mG\xra \G$. Recall from \cite[\S3.4]{clarkhazratrigby} that a $\mG$-space $X$ is called graded $\mG$-space if there is a continuous map $k: X\xra \G$ such that $k(\g\cdot x)=c(\g)k(x)$, whenever $(\g,x)\in \mG*X$.

Let $\mG$ and  $\HH$ be $\G$-graded groupoids. A $(\mG,\HH)$-equivalence $Z$ is called a \emph{$\G$-graded $(\mG,\HH)$-equivalence} if $Z$ is a $\G$-graded $\mG$-space and also a $\G$-graded $\HH$-space with respect to a continuous map $k: Z\xra \G$.

Let $\HH$ be a $\G$-graded groupoid graded by the cocycle $c: \HH\xra \G$ and $X$ a $\G$-graded right $\HH$-space with $k:X\xra \G$ the grading map. Then the imprimitivity groupoid $X^{\HH}$ is a $\G$-graded groupoid graded by the cocycle $k: X^{\HH}\xra \G$ given by 
$$k([x, y])=k(x)k(y)^{-1}$$ for $[x,y]\in X^{\HH}$. The map $k: X^{\HH}\xra \G$ is well defined. Observe that for $[x',y']=[x,y]$ in $X^{\HH}$ there exists $\g\in\HH$ such that $x'=x\cdot \g$ and $y'=y\cdot \g$, and thus $k([x',y'])=k(x')k(y')^{-1}=k(x\cdot \g)k(y\cdot \g)=k([x, y])$. We observe that $k$ is a continuous map. Indeed, we have the commutative diagram 
\begin{displaymath}
\xymatrix{X\times X\ar[r]^<<<<<{\phi}\ar[d]_{q}& \G \\X *_sX\ar[ur]_<<<<<<<<{\psi}}
\end{displaymath} where $\phi(x, y)=k(x)k(y)^{-1}$ for $(x, y)\in X\times X$, $ X *_sX$ is defined in \eqref{defzz}, $q: X\times X\xra X *_sX$ is the quotient map, and $\psi(x, y)=k(x)k(y)^{-1}$. Observe that $\phi$ is continuous and that $\phi^{-1}(\g)=q^{-1}(\psi^{-1}(\g))$ for any $\g\in\G$. Thus $\psi^{-1}(\g)$ is open, since $\phi^{-1}(\g)$ is open and $q$ is the quotient map. Hence $\psi$ is continuous. Similarly, we have the following commutative diagram \begin{displaymath}
\xymatrix{X*_sX\ar[r]^<<<<<{\psi}\ar[d]_{q}& \G \\X^{\HH}\ar[ur]_<<<<<<<<{k}}
\end{displaymath} and thus $k: X^{\HH}\xra \G$ is continuous. In the case that $Z$ is a $\G$-graded $(\mG, \HH)$-equivalence, the groupoid isomorphism given in \eqref{isogroupoid} preserves the grading and thus $\mG$ and $Z^{\HH}$ are isomorphic as $\G$-graded groupoids.

If $Z$ is a $\G$-graded $(\mG,\HH)$-equivalence, then the linking groupoid $L$ of $Z$ is $\G$-graded with the cocycle map $k: L\xra \G$ inherited from the grading maps of $\mG$, $\HH$  and $Z$, and $k(\overline{z})=k(z)^{-1}$ for $z\in Z$.

\subsection{The category of graded $\mG$-sheaves} \label{nesfdte} Let $\mG$ be a $\Gamma$-graded groupoid, graded by the cocycle $c:~\mG~\rightarrow~\Gamma$
and 
$E$ a right $\mG$-space.  For $x\in \mG^0$, the fibre $s_E^{-1}(x)$ of $E$ is denoted by $E_x$ and is called the \emph{stalk} of $E$ at $x$. 

A $\mG$-space $E$ is called a \emph{$\mG$-sheaf of sets} if $s_E\colon E\to \mG^ {(0)}$ is a local homeomorphism. For a commutative ring $R$ with unit, we say the sheaf $E$ is a \emph{graded $\mG$-sheaf of $R$-modules}  if 
each stalk $E_x$ is a (left) $R$-module such that 

\begin{enumerate}[\rm(1)]

	\item the zero section sending $x\in \mG^ {(0)}$ to the zero of $E_x$ is continuous;
	
	\smallskip

	\item addition $E\times_{\G^{( 0)}} E\rightarrow E$ is continuous;
	
	\smallskip

	\item scalar multiplication $R\times E\rightarrow E$ is continuous, where $R$ has the discrete topology;
	
	\smallskip

	\item for each $g\in \mG$, the map $\psi_g\colon E_{r(g)}\rightarrow E_{s(g)}$ given by $\psi_g(e) = eg$ is $R$-linear.

	\item for any $x\in \mG^{(0)}$, $E_x=\bigoplus_{\gamma\in \Gamma} 
	(E_x)_\gamma$, where $(E_x)_\gamma$ are $R$-submodules of $E_x$;	
	\smallskip
	
	\item  $E_\gamma := \bigcup_{x \in \mG^{(0)}} (E_x)_\gamma$ is open in $E$ for every $\gamma \in \Gamma$; and
	\smallskip
	\item $E_\gamma \mG_\delta\subseteq E_{\gamma\delta}$ for every $\gamma, \delta \in \Gamma$.
\end{enumerate}
We call $(E_x)_\gamma$ the $\gamma$-\textit{homogeneous component} of $E_x$, and denote the \textit{homogeneous elements} of $E$ by 
\[E^h:=\bigcup_{x \in \mG^{(0)},\gamma \in \Gamma} (E_x)_\gamma.\] Note that the map $k: E^h \to \Gamma$, $s \mapsto \gamma$, where $ s\in (E_x)_\gamma$, is continuous, and (7) can be interpreted as $k(eg) = k(e)c(g)$ for every homogeneous $e \in E^h$ and any $g \in \mG$ such that $s_E(e) = r(g)$. Unlike graded $\mG$-spaces, it does not make sense to define a degree $k$ on all of $E$. A morphism $\phi:E\to F$ of $\mG$-sheaves of $R$-modules is a \emph{graded morphism} if $\phi(E_\g)\subseteq F_\g$ for any $\g \in \Gamma$. The category of graded $\mG$-sheaves of $R$-modules with graded morphisms will be denoted $\Gr_R\mG$. If from the outset we set the grade group $\Gamma$ the trivial group, then our construction gives the category of $\mG$-sheaves of $R$-modules, denoted by $\Modd_R \mG$.

\subsection{Graded Kakutani equivalences and stable groupoids}
\label{subsection38}

Matui \cite{matui} defines Kakutani equivalence for ample groupoids with compact unit spaces. In \cite{carlsenruizsims}, the authors extend this notion to ample groupoids with non-compact unit spaces. In this subsection, we consider graded groupoids and graded Kakutani equivalences.  This allows us to show that for a $\G$-graded ample groupoid $\mG$, 
\begin{equation*}
\Gr_R \mG  \cong \Modd_R \mG\times_c \G,
\end{equation*}
(see \eqref{keymom} and compare this with \eqref{cohenmont}).
This justifies the use of skew-product $\mG\times_c \G$ for the definition of the graded homology of the $\G$-graded groupoid $\mG$ (see Definition~\ref{homology}).

Recall that two ample groupoids $\mG$ and $\HH$ are \emph{Kakutani equivalent} if there are a $\mG$-full clopen $X\sub\mG^{(0)}$ and an $\HH$-full clopen $Y\sub\HH^{(0)}$ such that $\mG|_X\cong 
\HH|_Y$.

\begin{deff}\label{gradedkak}
Let $\G$ be a discrete group. Suppose that $\mG_1$ and $\mG_2$ are $\G$-graded groupoids. The groupoids $\mG_1$ and $\mG_2$ are called \emph{$\G$-graded Kakutani equivalent } if there are full clopen subsets $Y_i\subseteq \mG_i^{0}$, for $i=1,2$, such that $\mG_1|_{Y_1}$ is graded isomorphic to $\mG_2|_{Y_2}$. 
\end{deff}

Let $\mG$ be a $\G$-graded groupoid graded by the cocycle $c:\mG \rightarrow \G$. The \emph{stable groupoid} $\mG_\Gamma$ is defined as 
\begin{equation}\label{stagrome}
\mG_\Gamma=\big \{g_{\alpha,\beta} \mid \alpha,\beta\in \G, g\in \mG\big \},
\end{equation}
with composition given by $g_{\alpha,\beta} h_{\beta,\gamma}=(gh)_{\alpha,\gamma}$, if $g$ and $h$ are composable and inverse given by $(g_{\alpha, \beta})^{-1}=g^{-1}_{\beta, \alpha}$. Here $\mG_\Gamma^{(0)}=\{x_{\alpha,\alpha}\mid \alpha \in \G, x\in \mG^{(0)}\}$, $s(g_{\alpha,\beta})=s(g)_{\beta,\beta}$ and $r(g_{\alpha,\beta})=r(g)_{\alpha,\alpha}$. Observe that if $\mG$ is an ample groupoid, so is $\mG_\G$.

One defines a cocycle  (called $c$ again) 
\begin{align*}
c: \mG_\G&\longrightarrow \Gamma,\\
g_{\alpha,\beta} &\longmapsto \alpha^{-1} c(g)\beta.\notag
\end{align*}
 This makes  $\mG_\G$ a $\G$-graded groupoid. Furthermore, the $\varepsilon$-component of $\mG_\G$,
$(\mG_\G)_\varepsilon=\{ g_{\alpha,\beta} \mid c(g)=\alpha\beta^{-1} \}$,  is $\G$-graded groupoid by the cocyle
\begin{align*}
c: (\mG_\G)_\varepsilon&\longrightarrow \Gamma,\\
g_{\alpha,\beta} &\longmapsto c(g).\notag
\end{align*}
 
 Recall that for a $\G$-graded \'etale groupoid $\mG$, the skew-product $\mG\times_c \G$ given in Definition \ref{defsp} is $\Gamma$-graded with components 
 $(\mG\times_c \G)_\gamma=\mG_\gamma\times \G$, $\gamma \in \Gamma$ (see \cite[\S 3.3]{ahls} and  \cite[Definition~1.4]{renault}). 
 
\begin{lemma}\label{prinp}
Let $\mG$ be a $\G$-graded groupoid. We have the following. 

\begin{enumerate}[\upshape(1)]

\item If $\Gamma$ is a trivial group then $\mG_\G=\mG.$

\smallskip

\item  The groupoid  $\mG_\G$ is strongly $\G$-graded. 

\smallskip 

\item There is a $\G$-graded isomorphism $\mG \times_c \G \cong (\mG_\G)_\varepsilon$. 
\smallskip

\item The groupoids $\mG$ and $\mG_\G$ are graded Kakutani equivalent.  
\end{enumerate}

\end{lemma}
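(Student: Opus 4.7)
My plan is to treat the four parts in order, working from the explicit definition~\eqref{stagrome} of $\mG_\G$ and its cocycle $g_{\a,\b} \mapsto \a^{-1} c(g)\b$. Part (1) is a tautology: if $\G=\{\v\}$ the only admissible indices are $\a=\b=\v$, so the assignment $g \mapsto g_{\v,\v}$ is the required identification. For (2), I will invoke the criterion \cite[Lemma 2.3]{clarkhazratrigby} and verify that $s\bigl((\mG_\G)_\a\bigr) = \mG_\G^{(0)}$ for every $\a\in\G$. Given a unit $x_{\sigma,\sigma} \in \mG_\G^{(0)}$, the element $x_{\sigma\a^{-1},\,\sigma}$ (where $x \in \mG^{(0)} \subseteq \mG_\v$) has source $x_{\sigma,\sigma}$ and degree $(\sigma\a^{-1})^{-1}\cdot\v\cdot\sigma = \a$, which completes the check.

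For (3), I will define
\[
\Phi:\mG \times_c \G \longrightarrow (\mG_\G)_\v,\qquad (g,\a) \longmapsto g_{c(g)\a,\,\a},
\]
with inverse $g_{\a,\b} \mapsto (g,\b)$; the inverse is well defined because on $(\mG_\G)_\v$ the defining relation $c(g)=\a\b^{-1}$ forces $\a = c(g)\b$. A direct calculation confirms $c(g_{c(g)\a,\a}) = (c(g)\a)^{-1}c(g)\a = \v$, so $\Phi$ lands in $(\mG_\G)_\v$, and continuity of $\Phi$ follows from continuity of $c$ and the product topology on $\mG\times\G\times\G$. To verify that $\Phi$ preserves the groupoid operations I will note that the composable pair $\bigl((x,c(y)\b),(y,\b)\bigr)$ in $\mG \times_c \G$ is sent to $x_{c(x)c(y)\b,\,c(y)\b}$ and $y_{c(y)\b,\,\b}$, which match up in $\mG_\G$ (the shared index $c(y)\b$ agrees) with product $(xy)_{c(xy)\b,\,\b} = \Phi(xy,\b)$; parallel checks handle inverses, sources and ranges. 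The grading is preserved since $\tilde c(g,\a)=c(g)$ equals the value of the restricted cocycle on $(\mG_\G)_\v$ at $g_{c(g)\a,\a}$.

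For (4), I will take $Y=\mG^{(0)}$ inside $\mG$ and $Y' = \mG^{(0)}\times\{\v\}$ inside $\mG_\G^{(0)}$, the latter being clopen because $\G$ carries the discrete topology. The restriction $\mG_\G|_{Y'}$ is exactly $\{g_{\v,\v}\mid g\in\mG\}$, and the bijection $g\leftrightarrow g_{\v,\v}$ is a graded groupoid isomorphism since the stable cocycle evaluates to $c(g)$ on these elements. Fullness of $Y'$ in $\mG_\G$ follows because, for any unit $x_{\a,\a}$, the element $x_{\a,\v}$ lies in $\mG_\G$ with range $x_{\a,\a}$ and source $x_{\v,\v}\in Y'$. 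The main obstacle I anticipate is the index bookkeeping in part (3): one must keep straight the two cocycles at play—the full $\G$-grading on $\mG_\G$ given by $\a^{-1}c(g)\b$, and its restriction to $(\mG_\G)_\v$ which becomes itself a $\G$-grading via $g_{\a,\b}\mapsto c(g)$—and ensure that it is the second of these that is transported to $\tilde c$ on the skew-product by $\Phi$.
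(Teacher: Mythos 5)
Your proposal is correct and follows essentially the same route as the paper: the same isomorphism $(g,\a)\mapsto g_{c(g)\a,\a}$ for part (3), the same source-surjectivity criterion from \cite[Lemma 2.3]{clarkhazratrigby} for part (2), and the same identification $g\leftrightarrow g_{\v,\v}$ with $Y=\mG^{(0)}$ for part (4); you merely supply more of the routine verifications (composability, degrees, fullness) that the paper leaves implicit.
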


\begin{proof}

(1) This follows from the construction~(\ref{stagrome}).  

(2) A $\G$-groupoid $\mG$ is strongly graded if and only if $s(\mG_\gamma)=\mG^{(0)}$ for every $\gamma \in \Gamma$ (see \cite[Lemma 2.3]{clarkhazratrigby}). 
Let $x_{\alpha,\alpha} \in \mG_\Gamma^{(0)}$, where $\alpha \in \Gamma$ and $x\in \mG^{(0)}$. Then $x_{\alpha\g,\alpha} \in (\mG_\Gamma)_{\gamma^{-1}}$, with $s(x_{\a \g,\alpha})=x_{\alpha,\alpha}$. Thus $\mG_\G$ is strongly graded.

(3) Consider the map 
\begin{align}
\label{stableiso}
\phi: \mG\times_c \G  &\longrightarrow  (\mG_\G)_\varepsilon,\\
(g,\gamma) &\longmapsto g_{c(g) \g,\gamma}. \notag
\end{align}
It is easy to show that this map is a $\G$-graded groupoid isomorphism. 

(4) Note that if $Y$ is a full clopen subset of $\mG^{(0)}$ if and only if  $Y_\varepsilon:=\{x_{\varepsilon,\varepsilon} \mid x \in \ Y\}$ is a full clopen subset in $\mG_\G$. The map 
\begin{align}
\label{griso}
\psi: \mG|_{Y} & \longrightarrow \mG_\G|_{Y_\varepsilon},\\
g &\longmapsto g_{\varepsilon,\varepsilon},  \notag
\end{align}
is a $\Gamma$-graded groupoid isomorphism. Now considering $Y=\mG^{(0)}$ the claim follows. 
\end{proof}

In~\cite[Theorem~2.12]{cm1984} it was shown that for a $\G$-graded ring $A$, where $\G$ is a finite group, $A$ is strongly graded if and only if $A_{\varepsilon}$ and $A \#\G$ are Morita equivalent. We have a similar statement in the groupoid setting, confirming skew-product groupoids are the right replacement for smash products.

\begin{thm} \label{tfse}Let $\mG$ be a $\G$-graded \'etale groupoid. The following statements are equivalent:
\begin{enumerate}[\upshape(1)]
\item The groupoid $\mG$ is strongly graded;

\smallskip

\item $\mG_{\varepsilon}$ and $\mG\times_c \G$ are Kakatuni equivalent with respect to some full clopen subset $Y\subseteq \mG_{\varepsilon}^{(0)}$ in $\mG_{\varepsilon}$ and $Y_{\varepsilon}=\{(y, \varepsilon)\;|\; y\in Y\}$ a full clopen subset of $\mG^{(0)}\times \G$ in $\mG\times_c \G$.
\end{enumerate}
\end{thm}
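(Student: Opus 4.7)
The plan is to prove each direction by unwinding the definitions of strong grading, the skew-product (Definition~\ref{defsp}), and Kakutani equivalence (Definition~\ref{gradedkak}), together with the criterion from \cite[Lemma~2.3]{clarkhazratrigby} that $\mG$ is strongly $\G$-graded if and only if $r(\mG_\alpha) = \mG^{(0)}$ for every $\alpha \in \G$.

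For $(1)\Rightarrow(2)$, I would take $Y = \mG^{(0)} = \mG_\varepsilon^{(0)}$, which is automatically full clopen in $\mG_\varepsilon$, and verify that $Y_\varepsilon = \mG^{(0)}\times\{\varepsilon\}$ witnesses the Kakutani equivalence. The restriction $(\mG\times_c\G)|_{Y_\varepsilon}$ consists of those $(g,\beta)$ with $s(g,\beta)=(s(g),\beta)\in Y_\varepsilon$ and $r(g,\beta)=(r(g),c(g)\beta)\in Y_\varepsilon$. These two conditions together force $\beta=\varepsilon$ and $c(g)=\varepsilon$, so
\begin{equation*}
(\mG\times_c\G)|_{Y_\varepsilon} \;=\; \mG_\varepsilon\times\{\varepsilon\},
\end{equation*}
and the projection $(g,\varepsilon)\mapsto g$ is an isomorphism onto $\mG_\varepsilon|_Y=\mG_\varepsilon$ that manifestly preserves the $\G$-grading. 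The set $Y_\varepsilon$ is clopen as a product of clopens in $\mG^{(0)}\times\G$, so only its fullness requires work: given $(x,\alpha)\in(\mG\times_c\G)^{(0)}=\mG^{(0)}\times\G$, strong gradedness furnishes $g\in\mG_\alpha$ with $r(g)=x$, and then $(g,\varepsilon)$ has range $(x,\alpha)$ and source $(s(g),\varepsilon)\in Y_\varepsilon$.

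For $(2)\Rightarrow(1)$, the isomorphism of the restricted groupoids is not used at all; it is enough to exploit the fullness of $Y_\varepsilon$ inside $\mG\times_c\G$. Given arbitrary $\alpha\in\G$ and $x\in\mG^{(0)}$, apply fullness to the unit $(x,\alpha)$ to produce $(g,\beta)\in\mG\times_c\G$ with $r(g,\beta)=(x,\alpha)$ and $s(g,\beta)\in Y_\varepsilon$. The source condition forces $\beta=\varepsilon$, and then the range condition forces $c(g)=\alpha$ and $r(g)=x$, so $x\in r(\mG_\alpha)$. Hence $r(\mG_\alpha)=\mG^{(0)}$ for every $\alpha\in\G$, and \cite[Lemma~2.3]{clarkhazratrigby} concludes that $\mG$ is strongly graded.

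There is no genuine obstacle: both directions reduce to a careful bookkeeping of the source and range formulas for the skew-product. The only subtle point is to notice that restricting $\mG\times_c\G$ to $Y_\varepsilon$ collapses the second coordinate to $\varepsilon$ and simultaneously forces the first coordinate to lie in $\mG_\varepsilon$. This is what makes the restricted groupoid canonically isomorphic to $\mG_\varepsilon|_Y$ in the forward direction, and what lets the backward direction follow from the mere fullness of $Y_\varepsilon$.
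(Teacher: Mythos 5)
Your proof is correct and follows essentially the same route as the paper: fullness of $Y_\varepsilon$ is deduced from strong gradedness, the restricted groupoid $(\mG\times_c\G)|_{Y_\varepsilon}$ is identified with $\mG_\varepsilon|_Y$, and the converse uses only fullness of $Y_\varepsilon$ together with the criterion $r(\mG_\alpha)=\mG^{(0)}$ from \cite[Lemma~2.3]{clarkhazratrigby}. The only (harmless) difference is that you fix $Y=\mG^{(0)}$ and compute $(\mG\times_c\G)|_{Y_\varepsilon}=\mG_\varepsilon\times\{\varepsilon\}$ directly from the source and range formulas, whereas the paper treats a general full clopen $Y$ and obtains the isomorphism by passing through the stable groupoid $\mG_\G$.
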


\begin{proof} (1)$\Rightarrow$ (2): Suppose that $\mG$ is strongly graded. We first claim that $Y_{\varepsilon}=\{(y, \varepsilon)\;|\; y\in Y\}$ is a full clopen subset of $\mG^{(0)}\times \G$ in $\mG\times_c \G$, when $Y\subseteq \mG_{\varepsilon}^{(0)}$ is clopen full in $\mG_{\varepsilon}$. Take any $(y, \g)\in \mG^{(0)}\times \G$. For $y\in \mG^{(0)}$, there exists $g\in\mG_{\g}$ such that $r(g)=y$, since $\mG$ is strongly graded. For $s(g)\in \mG^{(0)}$, there exists $g'\in \mG_{\varepsilon}$ such that $s(g')\in Y$ and $r(g')=s(g)$, since $Y\sub\mG^{(0)}$ is full in $\mG_{\varepsilon}$. Then we have $(gg', \varepsilon)\in \mG\times_c\G$ such that $r(gg', \varepsilon)=(y, \g)$ and $s(gg', \varepsilon)=(s(g'), \varepsilon)\in Y_{\varepsilon}$. Since $\mG\times_c\G$ has product topology, $Y_{\varepsilon}$ is clopen in $\mG^{(0)}\times \G$ when $Y$ is clopen in $\mG^{(0)}$. Thus we complete the proof of the claim. Observe that $(\mG|_Y)_{\varepsilon}=\mG_{\varepsilon}|_Y$ and $(\mG_{\G}|_{Y_{\varepsilon}})_{\varepsilon}=(\mG_{\G})_{\varepsilon}|_{Y_{\varepsilon}}$. By \eqref{griso} and \eqref{stableiso} , we have $(\mG_{\varepsilon})|_{Y}\cong (\mG\times_c\G)|_{Y_{\varepsilon}}$ as groupoids. The unit space $\mG^{(0)}$ is a full clopen subset for $\mG_{\varepsilon}$. Hence, (ii) holds.

(2)$\Rightarrow$(1): Suppose that (ii) holds. Then there exist $Y\subseteq \mG_{\varepsilon}^{(0)}$ a full clopen in $\mG_{\varepsilon}$ and $Y_{\varepsilon}=\{(y, \varepsilon)\;|\; y\in Y\}$ a full clopen subset of $\mG^{(0)}\times \G$ in $\mG\times_c \G$. We need to show that $r(\mG_{\g})=\mG^{(0)}$ for any $\g\in\G$. Take any $\g\in\G$ and any $u\in\mG^{(0)}$. There exists $(g, \eta)\in \mG\times_c\G$ such that $s(g, \eta)\in Y_{\varepsilon}$ and $r(g, \eta)=(u, \g)$. This implies $\eta=\varepsilon$ and $g\in\mG_{\g}$. Thus we have $r(\mG_{\g})=\mG^{(0)}$ for any $\g\in\G$.
\end{proof}

\section{Steinberg algebras}
\label{subsection24}

In this section we briefly recall the construction of the Steinberg algebra associated to an ample groupoid. We will then tie together the concepts between graded groupoid theory and graded ring theory via Steinberg theory. 

Let $\mG$ be an ample topological groupoid. Suppose that $R$ is a commutative ring with unit. Consider $A_R(\mG) := C_c(\mG, R)$, the space of
compactly supported continuous functions from $\mG$ to $R$ with $R$ given the discrete topology.
Then $A_R(\mG)$ is an $R$-algebra with addition defined point-wise and multiplication by 
$$(f*g)(\g) = \sum_{\a\b=\g}f(\a)g(\b).$$
It is useful to note that $$1_{U}*1_{V}=1_{UV}$$ for compact open bisections $U$ and $V$
(see \cite[Proposition 4.5(3)]{st}). With this structure, $A_R(\mG)$ is an algebra called the \emph{Steinberg algebra}  associated to $\mG$. The algebra $A_R(\mG)$ can also be realised as the span of characteristic functions of the form $1_U$, where $U$ is a compact open bisection. By \cite[Lemma 2.2]{cm} and \cite[Lemma
3.5]{cfst}, every element $f\in A_{R}(\mG)$ can be expressed as
\begin{equation}
\label{express}
f=\sum_{U\in F}a_{U}1_{U},
\end{equation}
where $F$ is a finite subset of mutually disjoint compact open bisections.

Recall from \cite[Lemma 3.1]{cs} that if $\mG=\bigsqcup_{\g\in \G}\mG_\g$ is a $\G$-graded ample groupoid, then the Steinberg algebra $A_R(\mG)$ is a $\G$-graded
algebra with homogeneous components
\begin{equation*}\label{hgboat}
    A_{R}(\mG)_{\g} = \{f\in A_{R}(\mG)\mid \supp(f)\subseteq \mG_\g\}.
\end{equation*}
The family of all idempotent elements of $A_{R}(\mG^{(0)})$ is a set of local units for
$A_{R}(\mG)$ (see \cite[Lemma 2.6]{cep}). Here, $A_{R}(\mG^{(0)})\subseteq A_{R}(\mG)$ is a
subalgebra. Note that any ample 
groupoid admits the trivial cocycle from $\mG$ to the trivial group $\{\varepsilon\}$,
which gives rise to a trivial grading on $A_{R}(\mG)$.

 Let $A$ be an $R$-algebra. A representation of $B^{\rm co}_*(\mG)$ in $A$ is a family 
$$\big \{t_U \mid U\in B^{\rm co}_*(\mG)\big \}\sub A$$ satisfying
\begin{itemize}
\item[(R1)] $t_{\emptyset}= 0$;

\smallskip

\item[(R2)] $t_Ut_V = t_{UV}$ for all $U,V\in B^{\rm co}_*(\mG)$, and

\smallskip
\item[(R3)] $t_U +t_V = t_{U\cup V}$, whenever $U,V \in B^{\rm co}_\g(\mG)$ for some $\g\in\G$, $U\cap V =\emptyset$ and $U\cup V\in B^{\rm co}_\g(\mG)$.
\end{itemize}

We have the following statement that $A_R(\mG)$ is a universal algebra (see \cite[Theorem 3.10]{cfst} and \cite[Proposition 2.3]{cm}), which will be used throughout the paper. 

\begin{lemma}\label{copenhagenairport} 
Let $\mG$ be a $\G$-graded ample groupoid. Then 
$\big \{1_U \mid U\in B^{\rm co}_*(\mG)\big \}\sub A_R(\mG)$ is a representation of 
$B^{\rm co}_*(\mG)$ which spans $A_R(\mG)$. Moreover, $A_R(\mG)$ is universal for representations of 
$B^{\rm co}_*(\mG)$ in the
sense that for every representation 
$\big \{t_U \mid U\in B^{\rm co}_*(\mG)\big \}$ of $
B^{\rm co}_*(\mG)$ in an $R$-algebra $A$, there
is a unique $R$-algebra homomorphism $\pi: A_R(\mG)\xra A$ such that $\pi(1_U) = t_U$, for all $U\in B^{\rm co}_*(\mG)$.
\end{lemma}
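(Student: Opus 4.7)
The plan is to verify the three relations (R1)--(R3) for the family $\{1_U\}$, establish the spanning property from the representation~\eqref{express}, and then construct and verify the universal map via a common refinement argument.

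First, I will check that $\{1_U\mid U\in B_{*}^{\rm co}(\mG)\}$ really is a representation. Relation (R1) is immediate from the definition of $1_{\emptyset}$. Relation (R2), that $1_U\ast 1_V=1_{UV}$, is exactly the identity recalled in the text from \cite[Proposition~4.5(3)]{st}. Relation (R3) is pointwise: if $U,V\in B_{\g}^{\rm co}(\mG)$ are disjoint and $U\cup V\in B_{\g}^{\rm co}(\mG)$, then $1_U+1_V$ and $1_{U\cup V}$ take the same value at each $g\in\mG$. The spanning statement is precisely~\eqref{express}.

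For universality, given a representation $\{t_U\}$ in an $R$-algebra $A$, I would try to define $\pi:A_R(\mG)\to A$ by
\begin{equation*}
\pi\Bigl(\sum_{U\in F}a_U 1_U\Bigr)=\sum_{U\in F}a_U t_U,
\end{equation*}
where $F$ is a finite set of mutually disjoint compact open bisections, and extend $R$-linearly. The heart of the argument -- and the step I expect to be the main obstacle -- is well-definedness: two distinct expressions $\sum_{U\in F}a_U1_U=\sum_{V\in F'}b_V1_V$ must be sent to the same element of $A$. The standard device is to pass to a common refinement. Using that each $U\cap V$ and $U\setminus V$ (for $U\in F$, $V\in F'$) is again a compact open bisection, together with the fact that the cocycle $c$ is constant on any connected piece contained in some $\mG_\gamma$, one produces a finite family $\{W_1,\dots,W_n\}$ of pairwise disjoint compact open bisections, each contained in a single $\mG_{\gamma}$, such that every $U\in F$ (respectively $V\in F'$) is a disjoint union of some $W_i$'s in a common fixed grading. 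Applying (R3) repeatedly, one obtains $t_U=\sum_{W_i\subseteq U}t_{W_i}$, and similarly for $V$; comparing coefficients of each $W_i$ on both sides (using that the $1_{W_i}$ are $R$-linearly independent in $A_R(\mG)$) then forces the two sums $\sum a_U t_U$ and $\sum b_V t_V$ to agree.

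Once $\pi$ is well-defined, $R$-linearity is built in, and multiplicativity reduces to the generators: for compact open bisections $U,V$,
\begin{equation*}
\pi(1_U\ast 1_V)=\pi(1_{UV})=t_{UV}=t_U t_V=\pi(1_U)\pi(1_V),
\end{equation*}
where the middle equality uses (R2); bilinearity of both multiplications then extends this to arbitrary sums. Uniqueness follows immediately from the spanning property, since any $R$-algebra homomorphism $A_R(\mG)\to A$ sending $1_U\mapsto t_U$ is determined on a spanning set. The subtlety in the refinement argument -- ensuring that the $W_i$ can be chosen compact open bisections each lying in a single homogeneous component so that (R3) is applicable -- is what distinguishes the graded case from the ungraded one, but it is handled by intersecting an arbitrary refinement with the clopen sets $\mG_\g$ before applying (R3).
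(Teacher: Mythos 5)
Your proof is correct and follows the standard disjointification/common-refinement argument; the paper itself does not prove this lemma but cites \cite[Theorem 3.10]{cfst} and \cite[Proposition 2.3]{cm}, whose proofs proceed exactly as you describe (verify (R1)--(R3) pointwise and via $1_U*1_V=1_{UV}$, then establish well-definedness of $\pi$ by refining to pairwise disjoint homogeneous compact open bisections and applying (R3) iteratively). Your observation that the refinement must be intersected with the clopen sets $\mG_\g$ so that (R3) applies is precisely the only point where the graded case differs from the ungraded one, and you handle it correctly.
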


\subsection{The Steinberg algebra of a semi-direct product groupoid} \label{intheairfaroe}

Let $\mG$ be a $\Gamma$-graded ample groupoid, graded by the cocycle $c:\mG\xra \G$, $R$ a unital commutative
ring and $\mG \times_c \Gamma$ the skew product groupoid (see Definition~\ref{defsp}). By~\cite[Theorem 3.4]{ahls} there is an isomorphism of $\G$-graded algebras 
\begin{align}\label{saiso}
A_{R}(\mG\times_c\G) &\longrightarrow  A_{R}(\mG)\#\G,\\
1_{U} &\longmapsto \sum_{i=1}^l1_{U_i} p_{\g_i}, \notag
\end{align} 
where $U=  \bigsqcup^l_{i=1} U_i \times \{\g_i\}  \in B_*^{\rm co}(\mG\times_c \G)$ (see~(\ref{normform})). This will be used in the proof of Theorem~\ref{ses} to calculate the graded homology of graph groupoids.


In this subsection we establish the dual of this statement. We show that the Steinberg algebra of the semi-direct product groupoid of an ample groupoid (Example~\ref{exmsemidirect}) is graded isomorphic to the partial skew group ring of its Steinberg algebra (Proposition~\ref{jdjdjdue3}).

Recall from \cite[\S 3.1]{hl} the partial action $\phi=(\phi_\g, X_\g, X)_{\g \in \G}$ of a discrete group $\G$ on a locally compact Hausdorff topological space $X$. In the case $X$ is an algebra or a ring then the subsets $X_\g$ should also be ideals and the maps $\phi_\g$ should be isomorphisms of algebras. 
There is a $\G$-graded groupoid \begin{equation*}
 \label{groupoid}
 \mG_X=\bigcup_{\g \in \G} \g \times X_\g,
 \end{equation*} associated to a partial action $(\phi_\g, X_\g, X)_{\g \in \G}$, whose composition and inverse maps are given by $(\g,x)(\g',y)=(\g\g',x)$ if $y=\phi_{\g^{-1}}(x)$ and $(\g,x)^{-1}=(\g^{-1},\phi_{\g^{-1}}(x))$.

The action $\phi:\Gamma\curvearrowright \mG$ is a partial action of $\G$ on $\mG$ such that $\phi_\g$ is a bijection from $\mG$ to $\mG$. Thus the action $\phi$ induces an action of $\G$ on $A_R(\mG)$, still denoted by $\phi$, such that 
\begin{equation}
\label{inducedaction}
\phi_\g(f)=f\circ \phi_{\g^{-1}}
\end{equation} for $\g\in\G$ and $f\in A_R(\mG)$ (see \cite[\S 3.1]{hl}).

For an action $\phi:\Gamma\curvearrowright \mG$ with $\mG=\mG^{(0)}$, observe that the semi-direct product coincides with the groupoid $\mG_{X}=\bigcup_{\g \in \G} \g \times X$ with $X=\mG^{(0)}$. 

Let $\phi=(\phi_\g, A_\g, A)_{\g\in\G}$ be a partial action of the discrete group $\G$ on an algebra $A$. The \emph{partial skew group ring} $A\rtimes_{\phi}\G$ consists of all formal forms $\sum_{\g\in \G}a_\g\d_\g$ (with finitely many $a_\g$ nonzero), where $a_\g\in A_\g$ and $\d_\g$ are symbols, with addition defined in the obvious way and multiplication being the linear extension of $$(a_\g\d_\g) (a_{\g'}\d_{\g'})=\phi_\g\big(\phi_{\g^{-1}}(a_\g)a_{\g'}\big)\d_{\g\g'}.$$ Observe that $A \rtimes_{\phi}\G$ is always a $\G$-graded ring with $(A \rtimes_{\phi}\G)_\g=A_\g  \d_\g$ for $\g\in\G$.

Recall from \cite[Proposition 3.7]{hl} that the Steinberg algebra of $\mG_X$ is the partial skew group ring $C_R(X)\rtimes_{\phi}\G$. We show here that the Steinberg algebra of the semi-direct product $\mG\rtimes_{\phi}\G$ is the partial skew group ring $A_R(\mG)\rtimes_{\phi}\G$.

\begin{prop}\label{jdjdjdue3}
Let $\mG$ be an ample groupoid and $\phi:\Gamma\curvearrowright \mG$ an action of a discrete group $\G$ on $\mG$ and $R$ a commutative ring with unit. Then there is a $\G$-graded $R$-algebra isomorphism $$A_R(\mG\rtimes_{\phi}\G)\cong A_R(\mG)\rtimes_{\phi}\G.$$ 
\end{prop}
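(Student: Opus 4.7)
The plan is to apply the universal property of the Steinberg algebra (Lemma~\ref{copenhagenairport}) by constructing a representation of $B^{\rm co}_*(\mG\rtimes_{\phi}\G)$ inside the partial skew group ring $A_R(\mG)\rtimes_{\phi}\G$, and then to show the induced homomorphism is graded and bijective. First I would observe that, since $\G$ is discrete and the topology on $\mG\rtimes_\phi \G$ is the product topology, every compact open bisection $W$ of $\mG\rtimes_\phi \G$ decomposes uniquely as $W=\bigsqcup_{i=1}^{l}U_i\times\{\g_i\}$ with distinct $\g_i\in\G$ and each $U_i$ a compact open bisection of $\mG$ (this is the direct analogue of~\eqref{normform}, and uses that $\phi_{\g^{-1}}$ is a groupoid homeomorphism so that the bisection condition on $U\times\{\g\}$ reduces to the bisection condition on $U$).

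Next I would define
\begin{equation*}
t_{U\times\{\g\}}:=1_U\,\d_{\g}\quad\text{and}\quad t_{\bigsqcup_{i}U_i\times\{\g_i\}}:=\sum_{i=1}^{l}1_{U_i}\,\d_{\g_i},
\end{equation*}
and verify axioms (R1)--(R3). Axiom (R1) is immediate, and (R3) is straightforward because two disjoint same-degree bisections are both of the form $U\times\{\g\}$ and $V\times\{\g\}$ with $U\cap V=\emptyset$. The core computation is (R2): for compact open bisections $U\times\{\g\}$ and $V\times\{\g'\}$, the multiplication in the partial skew group ring gives
\begin{equation*}
(1_U\,\d_\g)(1_V\,\d_{\g'})=\phi_\g\!\bigl(\phi_{\g^{-1}}(1_U)\cdot 1_V\bigr)\d_{\g\g'}=1_U\cdot \phi_\g(1_V)\,\d_{\g\g'}=1_{U\cdot \phi_\g(V)}\,\d_{\g\g'},
\end{equation*}
while from the definition of the semi-direct product one computes $(U\times\{\g\})(V\times\{\g'\})=U\cdot\phi_\g(V)\times\{\g\g'\}$. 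The identity $\phi_\g(1_V)=1_{\phi_\g(V)}$ (valid because $\phi_\g$ is a homeomorphism) then matches the two sides. By Lemma~\ref{copenhagenairport}, this representation extends uniquely to an $R$-algebra homomorphism $\pi: A_R(\mG\rtimes_\phi\G)\to A_R(\mG)\rtimes_\phi\G$ with $\pi(1_W)=t_W$.

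Finally, I would check that $\pi$ is $\G$-graded and bijective. Gradedness is immediate: a homogeneous element of degree $\g$ in $A_R(\mG\rtimes_\phi\G)$ has support in $\mG\times\{\g\}$, and the formula sends it into $A_R(\mG)\d_\g=(A_R(\mG)\rtimes_\phi\G)_\g$. Surjectivity is clear since any $a\,\d_\g$ with $a=\sum_j r_j 1_{U_j}\in A_R(\mG)$ is the image of $\sum_j r_j 1_{U_j\times\{\g\}}$. For injectivity I would use the disjoint-bisection normal form~\eqref{express}: writing $f=\sum_i r_i 1_{W_i}$ with the $W_i$ mutually disjoint, refine so each $W_i=V_i\times\{\g_i\}$, and group by $\g$; disjointness of the $W_i$ forces disjointness of the $V_i$ within each fixed $\g$-block, so if $\pi(f)=0$ then each $\g$-component $\sum_{i:\g_i=\g}r_i 1_{V_i}=0$ in $A_R(\mG)$ forces every coefficient to vanish. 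The main obstacle is just the bookkeeping in step two (checking (R2) on the composition formula for the semi-direct product against the partial skew-product multiplication); everything else is a direct consequence of the universal property and the disjoint-support normal form of elements of a Steinberg algebra.
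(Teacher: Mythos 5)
Your proposal is correct and follows essentially the same route as the paper's proof: define $t_{U\times\{\g\}}=1_U\d_\g$, verify (R1)--(R3) with the same (R2) computation matching $(U\times\{\g\})(V\times\{\g'\})=U\phi_\g(V)\times\{\g\g'\}$ against the partial skew-product multiplication, invoke the universal property, and check gradedness and bijectivity via the disjoint-support normal form. The only cosmetic difference is that you decompose general bisections into several $\g$-slices, whereas the paper notes that the grading $c(g,\g)=\g$ forces each \emph{graded} bisection to be a single slice $U_\g\times\{\g\}$, which is all the universal property requires.
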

\begin{proof} We first define a representation $\{t_{U} \mid  U\in B_{*}^{\rm co}(\mG\rtimes_{\phi} \G)\}$ in the algebra $A_{R}(\mG)\rtimes_{\phi}\G$. For any graded compact open bisection $U\in B^{\rm co}_{\a}(\mG\rtimes_{\phi}\G)$ of $\mG\rtimes_{\phi}\G$, we may write $U=U\cap \mG\times\{\a\}=U_{\a}\times\a$, where $U_{\a}\sub \mG$ is a compact open bisection of $\mG$. We define $t_U=1_{U_{\a}}\delta_{\a}$. We show that these elements $t_U$ satisfy (R1)--(R3). Certainly if $U=\emptyset$, then
$t_{U}=0$, giving~(R1). For~(R2), take $V \in B^{\rm co}_\b(\mG \rtimes_{\phi} \G)$, and
write $V = V_{\b}\times \{\b\}$. Then
\begin{equation} \label{tt}
\begin{split}
t_{U}t_{V}
    &=1_{U_{\a}}\delta_{\a}\cdot1_{V_{\b}}\delta_{\b}\\
    &=\phi_{\a}(\phi_{\a^{-1}}(1_{U_\a})1_{V_{\b}})\delta_{\a\b}\\
    &=1_{U_\a}*\phi_{\a}(1_{V_\b})\delta_{\a\b}\\
    &=1_{U_\a}*1_{\phi_\a(V_\b)}\delta_{\a\b}\\
    &=1_{U_\a\phi_\a(V_\b)}\delta_{\a\b}.
\end{split}
\end{equation} Here, the second last equality holds since $\phi_{\a}(1_{V_\b})=1_{\phi_{\a}(V_\b)}$ by \eqref{inducedaction}. On the other hand, by the composition of the semi-direct product $\mG\rtimes_{\phi} \Gamma$, we have
\begin{equation*}
\begin{split}
UV  &=U_{\a}\times\{\a\}\cdot V_{\b}\times\{\b\}=U_\a\phi_\a(V_\b)\times\{\a\b\}.
\end{split}
\end{equation*} Observe that $U_\a \phi_\a(V_\b)$ is compact since $U_\a$ and $\phi_\a(V_\b)$ are compact. It follows that $t_{UV}=1_{U_a \phi_\a(V_\b)}\delta_{\a\b}$. Comparing with \eqref{tt}, we have $t_{U}t_V=t_{UV}$. For (R3), suppose that $U$ and $V$ are disjoint elements of $B^{\rm
co}_{\g}(\mG\rtimes_{\phi} \G)$ for some $\g\in \G$ such that $U\cup V$ is a bisection
of $\mG\rtimes_{\phi} \Gamma$. Write them as $U=U_{\g}\times \{\g\}$ and $V=V_{\g}\times \{\g\}$ such that $U_\g$ and $V_{\g}$ are disjoint. 
We have $t_{U\cup V}=1_{U_\g\cup V_\g}\delta_\g=(1_{U_\g}+1_{V_\g})\delta_\g=t_U+t_V$.

By the universality of Steinberg algebras, we have an $R$-homomorphism,
\begin{equation*}
\Phi:A_{R}(\mG\rtimes_{\phi}\G) \longrightarrow A_{R}(\mG)\rtimes_{\phi}\G
\end{equation*}
such that $\Phi(1_{B\times \{\a\}})=1_{B}\delta_{\a}$ for each compact open bisection $B$ of $\mG$ and $\a\in\G$.  From the
definition of $\Phi$, it is evident that $\Phi$ preserves the grading. Hence, $\Phi$ is a
homomorphism of $\G$-graded algebras.

It remains to show that $\Phi$ is an isomorphism. For injectivity, take $f\in A_R(\mG\rtimes_{\phi}\G)_\g$ with $\g\in\G$ such that $\Phi(f)=0$. We may write $f=\sum_{i=1}^nr_i1_{U_i}$ with $U_i\in B^{\rm co}_{\g}(\mG\rtimes_{\phi}\G)$ such that $U_i$'s are mutually disjoint. Each $U_i=W_i\times\{\g\}$ with $W_i\sub \mG$ is a compact open bisection of $\mG$ and $W_i$'s are mutually disjoint. Then we have $\Phi(f)=\sum_{i=1}^nr_i1_{W_i}\delta_{\g}=0$, and thus $\sum_{i=1}^nr_i1_{W_i}=0$, implying that each $r_i$ is zero. Hence $f=0$. For the surjectivity of $\Phi$, take $f'\delta_\g\in A_R(\mG)\rtimes_{\phi}\G$ with $\g\in\G$ such that $f'=\sum_{j=1}^mt_j1_{W_j}$ with $t_j\in R$ and $W_j$ disjoint compact open bisections of $\mG$. We have $\Phi(\sum_{j=1}^mt_j1_{W_j\times \{\g\}})=f'$. 
\end{proof}


 \subsection{Steinberg algebras of stable groupoids and graded matrix groupoids}

Next we relate the notion of graded matrix rings (\S\ref{frankairport}) with stable groupoids (\S\ref{subsection38}) and graded matrix groupoids (\S\ref{subsection22}) via Steinberg algebras. 
 
 
 \begin{prop}\label{algebrafor}
 Let $\G$ be a discrete group, $\mG$ a $\G$-graded ample groupoid and $R$ a commutative ring with unit. We have $A_R(\mG_\G)\cong \M_{\G}(A_R(\mG))(\G)$ as $\G$-graded $R$-algebras. 
 \end{prop}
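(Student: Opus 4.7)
The plan is to invoke the universal property of Steinberg algebras (Lemma~\ref{copenhagenairport}) to produce the map $\Phi: A_R(\mG_\G)\longrightarrow \M_\G(A_R(\mG))(\G)$. Since $\G$ is discrete, the topology on $\mG_\G$ inherited from $\mG\times\G\times\G$ makes each slice $\mG\times\{(\a,\b)\}$ clopen, so every compact open bisection $W$ of $\mG_\G$ decomposes as a finite disjoint union $W = \bigsqcup_{(\a,\b)\in F} U^{(\a,\b)}_{\a,\b}$, where $U^{(\a,\b)}\sub \mG$ is a compact open bisection and $U_{\a,\b}:=\{g_{\a,\b}\mid g\in U\}$.

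For each compact open bisection $U\sub \mG$ and $(\a,\b)\in \G^2$, let $E_{\a,\b}(1_U)\in \M_\G(A_R(\mG))$ denote the matrix with entry $1_U$ in position $(\a,\b)$ and zero elsewhere, and set $t_{U_{\a,\b}}:=E_{\a,\b}(1_U)$, extended additively via the decomposition above. To apply Lemma~\ref{copenhagenairport} I would check the three representation axioms: (R1) is immediate; (R2) follows because $g_{\a,\b}$ and $h_{\g,\d}$ are composable exactly when $\b=\g$ and $g,h$ are composable in $\mG$, and the matrix product $E_{\a,\b}(1_U)\,E_{\g,\d}(1_V)$ obeys the same rule, reducing to $1_U * 1_V = 1_{UV}$ in $A_R(\mG)$; (R3), for disjoint $\g$-graded bisections whose union is again a bisection, reduces slice-by-slice to (R3) in $A_R(\mG)$ via additivity of $E_{\a,\b}(-)$ in each coordinate. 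This delivers an $R$-algebra homomorphism $\Phi$ with $\Phi(1_W) = t_W$.

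Next, $\Phi$ preserves grading: by the cocycle $c(g_{\a,\b}) = \a^{-1}c(g)\b$ of \eqref{stagrome} on $\mG_\G$, the element $1_{U_{\a,\b}}$ with $U\sub \mG_\d$ has degree $\a^{-1}\d\b$; on the other hand, $E_{\a,\b}(1_U)$ lies in $\M_\G(A_R(\mG))(\G)_\g$ precisely when $1_U\in A_R(\mG)_{\a\g\b^{-1}}$, i.e.\ $\g=\a^{-1}\d\b$. These agree.

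For surjectivity, any homogeneous element of $\M_\G(A_R(\mG))(\G)_\g$ is a finite sum of matrices $E_{\a,\b}(a)$ with $a\in A_R(\mG)_{\a\g\b^{-1}}$, and each such $a$ is an $R$-linear combination of $1_U$ for compact open bisections $U\sub \mG$, hence lies in the image of $\Phi$. For injectivity, any $f\in A_R(\mG_\G)$ can be written via \eqref{express} as $\sum_i r_i 1_{W_i}$ with the $W_i$ mutually disjoint compact open bisections; refining the sum along the $(\a,\b)$-slice decomposition above and applying $\Phi$ yields a matrix whose $(\a,\b)$-entry is a sum $\sum r_i 1_{U^{(i)}}$ over disjoint compact open bisections of $\mG$, so vanishing of the matrix forces each $r_i=0$ by the corresponding uniqueness of expansion in $A_R(\mG)$. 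The main obstacle is the careful bookkeeping for (R3) across multiple $(\a,\b)$-slices, which is handled cleanly by the clopen slice decomposition noted at the outset.
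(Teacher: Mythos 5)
Your proposal is correct and follows essentially the same route as the paper: decompose each compact open bisection of $\mG_\G$ into its clopen $(\a,\b)$-slices, send each slice to a matrix unit with entry $1_{U'_{\a\b}}$, verify (R1)--(R3) and the degree computation $c(g_{\a,\b})=\a^{-1}c(g)\b$ against the grading of $\M_\G(A_R(\mG))(\G)$, and invoke the universal property plus the same surjectivity/injectivity checks. The only remark is that the ``careful bookkeeping'' you attribute to (R3) is in fact most delicate in (R2), where distinct slice products can land in the same matrix position and one must use that $W$ is a bisection to see the corresponding sets are disjoint --- exactly the point the paper's proof spends most of its effort on.
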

 \begin{proof}We first define a representation $\{t_{U} \mid  U\in B_{*}^{\rm co}(\mG_\G)\}$ in
the algebra $\M_{\G}(A_{R}(\mG))(\G)$. If $U$ is a $\g$-graded compact open bisection of $\mG_\G$, then for $\a, \b\in\G$, the set $U \cap \mG_{\a\b}$ is a compact open bisection, where $\mG_{\a\b}=\{g_{\a\b}\;|\; g\in\mG\}$. Since these are mutually disjoint and $U$ is compact, there are finitely many (distinct) pairs $(\a, \b)$ such that 
\begin{equation*}
\label{union}
U = \bigsqcup_{(\a,\b)\in\G\times\G} U \cap \mG_{\a\b}.
\end{equation*} Each $U
\cap \mG_{\a\b}$ has the form $U_{\a\b}=\{g_{\a\b}\;|\; g\in U'_{\a\b}\}$ for $U'_{\a\b} \subseteq \mG$ a compact open subset. Observe that $U'_{\a\b}$ is a bisection, since $U=\bigsqcup_{\a,\b\in\G} U_{\a\b}$ is a bisection. Take any $x_{\a\b}\in U_{\a\b}$. We have $c(x_{\a\b})=\a^{-1}c(x)\b=\g$, implying $c(x)=\a\g\b^{-1}$ for all $x\in U'_{\a\b}$. So each $U'_{\a\b}
\in B^{\rm co}_{\a\g\b^{-1}}(\mG)$, and $U = \bigsqcup_{(\a,\b)\in\G\times\G} U_{\a\b}$. Using this
decomposition, we define
\begin{equation}
\label{deffor}
    t_{U}=\sum_{(\a,\b)\in\G\times\G} A_{\a\b}, 
\end{equation} where $A_{\a\b}$ is the matrix with $1_{U'_{\a\b}}$ in the $(\a,\b)$-th position and all the other entries zero. Observe that each $A_{\a\b}$ is in $\M_{\G}(A_R(\mG))_\g$.

Obviously, $t_{\emptyset}=0$ and thus (R1) holds. For (R2), take $U, V\in B^{\rm co}_*(\mG_\G)$. We assume that $U=\bigsqcup_{(\a,\b)}U_{\a\b}$ for finitely many distinct pairs $(\a,\b)\in\G\times\G$ and $V=\bigsqcup_{(\mu,\nu)}V_{\mu\nu}$ finitely many distinct pairs $(\mu,\nu)\in\G\times\G$. On one hand, we have \begin{equation}
\label{multi}
\begin{split}
t_Ut_V
&=\sum_{(\a, \b)}\sum_{(\mu, \nu)} A_{\a\b}A_{\mu\nu}\\
&=\sum_{(\a, \b)}\sum_{(\mu, \nu)} A_{\a\b}A_{\mu\nu}\\
&=\sum_{(\a,\b)} \sum_{\{(\mu, \nu), \b=\mu\}} A_{\a\b}A_{\mu\nu}.
\end{split}
\end{equation} On the other hand, we have 
\begin{equation}
\label{mm}
\begin{split}
UV
&=\bigsqcup_{(\a, \b)} U_{\a\b}\bigsqcup_{(\mu, \nu)}V_{\mu\nu}\\
&=\bigsqcup_{(\a, \b)}\bigsqcup_{(\mu, \nu)} U_{\a\b}V_{\mu\nu}\\
&=\bigsqcup_{(\a, \b)}\bigsqcup_{\{(\mu, \nu), \b=\mu\}} U_{\a\b}V_{\mu\nu},
\end{split}
\end{equation} where the last equality follows from the multiplication of the groupoid $\mG_\G$. If all the pairs $(\a, \nu)$ appearing in \eqref{mm} are pairwise distinct, then by comparing \eqref{multi} and \eqref{mm} we obtain $t_{UV}=t_Ut_V$, since $1_{U'_{\a\b}V'_{\mu\nu}}=1_{U'_{\a\b}}*1_{V'_{\mu\nu}}$ in $A_R(\mG)$. Assume that there are pairs $(\a_1, \b_1)\neq (\a_2,\b_2)$ and $(\mu_1,\nu_1)\neq (\mu_2,\nu_2)$ such that $(\a_1,\nu_1)=(\a_2,\nu_2)$. In this case, we have $\b_1=\mu_1$, $\b_2=\mu_2$, $\b_1\neq \b_2$ and $\mu_1\neq \mu_2$. Since the ranges of $U_{\a_1\b_1}$ and $U_{\a_2\b_2}$ are disjoint, so are $U_{\a_1\b_1}V_{\mu_1\nu_1}$ and $U_{\a_2\b_2}V_{\mu_2\nu_2}$. Observe that the matrix $A_{\a_1\b_1}A_{\mu_1\nu_1}+A_{\a_2\b_2}A_{\mu_2\nu_2}$ has $1_{U_{\a_1\b_1}V_{\mu_1\nu_1}}+1_{U_{\a_2j\b_2}V_{\mu_2\nu_2}}$ in the $(\a_1, \nu_1)$-th position and all its other entries are zero. After combining pairs where $(\a_i,\nu_i)=(\a_{i'},\nu_{i'})$ in \eqref{mm}, we have $t_{UV}=t_Ut_V$. For (R3), suppose that $U$ and $V$ are disjoint elements in $B^{\rm co}_\g(\mG_\G)$ with $\g\in\G$ such that $U\cup V$ is a bisection. Write $U=\bigsqcup_{(\a,\b)} U_{\a\b}$ and $V=\bigsqcup_{(\mu,\nu)}V_{\mu\nu}$ as above. It follows that $U\cup V=\bigsqcup_{(\a,\b)}\bigsqcup_{(\mu,\nu)} U_{\a\b}\cup V_{\mu\nu}$. Since $U\cup V$ is a bisection, we have $U'_{\a\b}\cap V'_{\mu\nu}=\emptyset$
if $(\a,\b)=(\mu,\nu)$. In this case, $A_{\a\b}+A_{\mu\nu}$ is the matrix with $1_{U'_{\a\b}\cup V'_{\mu\nu}}=1_{U'_{\a\b}}+1_{V'_{\mu\nu}}$ in the $(\a, \b)$th-position. This shows that after combining pairs that $(\a, \b)=(\mu,\nu)$, we have $t_{U\cup V}=t_U+t_V$. 

By the universality of Steinberg algebra, there exists an $R$-algebra homomorphism $\phi: A_R(\mG_\G)\xra \M_{\G}(A_R(\mG))(\G)$ such that $\phi(1_U)=\sum_{(\a,\b)\in\G\times\G} A_{\a\b}$ given in \eqref{deffor} for each $U\in B^{\rm co}_*(\mG_\G)$. The homomorphism $\phi$ preserves the grading.  

It remains to show that $\phi$ is an isomorphism. For the surjectivity of $\phi$, for any $\a, \b\in\G$, take a matrix $E_{\a\b}\in \M_{\G}(A_R(\mG))(\G)$ with $f'\in A_R(\mG)$ in the $(\a,\b)$-th position and all the other entries zero. It suffices to show that $E_{\a\b}\in {\rm Im} \phi$. We can write $f'=\sum_{w\in F} r_w1_{B_w}$ with $F$ a finite set, $r_w\in R$ and $B_w\in B_*^{\rm co}(\mG)$. Let $X_w=\{g_{\a, \b}\;|\; g\in B_w\}$ for each $w\in F$. Then $X_w$ is a graded compact open bisection of $\mG_\G$. Observe that $\phi(\sum_{w\in F}r_w1_{X_w})=\sum_{w\in F}\phi(r_w1_{X_w})=E_{\a\b}\in {\rm Im} \phi$.

To check the injectivity of $\phi$, suppose that $f''\in A_R(\mG_\G)$ such that $\phi(f'')=0$. Recall from \eqref{express} that we can write $f''=\sum_{i}r_i1_{U_i}$ with $r_i\in R$ and $U_i\in B_*^{\rm co}(\mG_\G)$ mutually disjoint graded compact open bisections. For any $\a,\b\in\G$, let $Y_i=U_i\cap \{g_{\a\b}\;|\; g\in \mG\}$ for each $i$. There exists $Y_i'\subseteq \mG$ such that $Y_i=\{g_{\a\b}\;|\; g\in Y_i'\}$. Observe that $Y_{i}'\cap Y_{i'}'=\emptyset$ for $i\neq i'$; otherwise, $U_i\cap U_{i'}\neq \emptyset$. We write $\phi(f'')=E\in \M_{\G}(A_R(\mG))(\G)$. It follows that the entry of $E$ in the $(\a, \b)$-th position is $\sum_ir_i1_{Y_{i}'}\in A_R(\mG)$. Thus $\sum_ir_i1_{Y_{i}'}=0$ in $A_R(\mG)$. Since all the $Y_i'$'s are disjoint, we obtain $r_i=0$ for all $i$, implying $f''=0$ in $A_R(\mG_\G)$. Hence, $\phi$ is injective.
 \end{proof}

\begin{prop} \label{stegrrecon}
Let $\G$ be a discrete group, $\mG$ a $\G$-graded ample groupoid and $R$ a commutative ring with unit.  Suppose that $f:\mG^{(0)}\xra \Z$ with $\Z$ a discrete group is a constant function such that $f(x)=n$ with $n$ a positive integer for any $x\in \mG^{(0)}$.
Take $(\g_1,\g_2, \cdots, \g_{n+1})\in\G^{n+1}$. Let $\psi: \mG_f^{(0)}\xra \G$ be the continuous map given by $\psi(x_{ii})=\g_{i+1}$ for $x\in \mG^{(0)}$ and $0\leq i\leq n$. Then \[A_R(\mG_f(\psi))\cong \M_{n+1}(A_R(\mG))(\g_1,\g_2, \cdots, \g_{n+1}),\] as $\G$-graded $R$-algebras.
\end{prop}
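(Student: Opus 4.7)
The plan is to follow the pattern of Proposition~\ref{algebrafor} closely, but now with a \emph{finite} matrix ring indexed by $\{0,1,\ldots,n\}$ (rather than all of $\G$), and with the shift $(\g_1,\ldots,\g_{n+1})$ entering through the continuous map $\psi$. As in that proof, the strategy is to construct a representation of $B_*^{\mathrm{co}}(\mG_f(\psi))$ inside $\M_{n+1}(A_R(\mG))(\g_1,\ldots,\g_{n+1})$ and then invoke the universality of Steinberg algebras (Lemma~\ref{copenhagenairport}) to obtain a graded algebra homomorphism, whose bijectivity will follow by tracking the grading.

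Since $f$ is constant with value $n$, the indexing set for matrix positions is the finite set $\{0,\ldots,n\}^2$. Any $U\in B^{\mathrm{co}}_*(\mG_f(\psi))$ decomposes uniquely as a finite disjoint union $U=\bigsqcup_{(i,j)} U_{ij}$ where each $U_{ij}=\{x_{ij}\mid x\in U^{(i,j)}\}$ for a compact open bisection $U^{(i,j)}\subseteq \mG$. If $U$ is $\g$-graded, then since $c_\psi(x_{ij})=\g_{i+1}c(x)\g_{j+1}^{-1}$, each $U^{(i,j)}$ lies in $\mG_{\g_{i+1}^{-1}\g\g_{j+1}}$. Define the representation by
\[
t_U=\sum_{(i,j)} E_{i+1,j+1}\bigl(1_{U^{(i,j)}}\bigr),
\]
where $E_{a,b}(h)$ denotes the $(n{+}1)\times(n{+}1)$ matrix with entry $h\in A_R(\mG)$ at position $(a,b)$ and zero elsewhere. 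The compatibility $E_{a,b}(h)\cdot E_{b,c}(h')=E_{a,c}(h*h')$ mirrors the groupoid composition $x_{ij}y_{jl}=(xy)_{il}$, so verifying axioms (R1)--(R3) reduces exactly to the calculations in the proof of Proposition~\ref{algebrafor} (with the simplification that our indexing sums are finite and the decomposition of $UV$ and $U\cup V$ into pieces $U^{(i,j)}V^{(j,l)}$ and $U^{(i,j)}\cup V^{(i,j)}$ respectively is straightforward).

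Universality then yields an $R$-algebra homomorphism $\phi:A_R(\mG_f(\psi))\to\M_{n+1}(A_R(\mG))(\g_1,\ldots,\g_{n+1})$. That $\phi$ preserves the grading is the one place where the shift $(\g_1,\ldots,\g_{n+1})$ is essential: each summand $E_{i+1,j+1}(1_{U^{(i,j)}})$ with $U^{(i,j)}\subseteq \mG_{\g_{i+1}^{-1}\g\g_{j+1}}$ lies, by definition of the grading on $\M_{n+1}(A_R(\mG))(\g_1,\ldots,\g_{n+1})$, in the $\g$-component, matching the degree of $1_{U_{ij}}$. Injectivity and surjectivity follow as in Proposition~\ref{algebrafor}: for surjectivity, any $E_{i+1,j+1}(f')$ with $f'=\sum_w r_w 1_{B_w}$ is the image of $\sum_w r_w 1_{(B_w)_{ij}}$; for injectivity, expand $f\in\ker\phi$ using~\eqref{express} with mutually disjoint bisections and read off each position $(i+1,j+1)$ of the resulting matrix separately, using that the decomposition pieces at different positions are disjoint in $\mG$. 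The main (minor) obstacle is simply keeping the grading conventions straight between the matrix side and the groupoid side; once the indexing map $x_{ij}\leftrightarrow E_{i+1,j+1}$ and the relationship $c_\psi(x_{ij})=\g_{i+1}c(x)\g_{j+1}^{-1}$ are aligned with the matrix-ring shift, the rest is a routine adaptation of the stable-groupoid argument.
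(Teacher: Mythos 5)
Your argument is correct in outline, but it takes a genuinely different route from the paper's. The paper disposes of this proposition in two lines: it observes that $x_{ij}\mapsto x_{\g_{i+1}^{-1},\g_{j+1}^{-1}}$ is a graded groupoid embedding of $\mG_f(\psi)$ into the stable groupoid $\mG_\G$ and then cites Proposition~\ref{algebrafor} (which identifies $A_R(\mG_\G)$ with $\M_\G(A_R(\mG))(\G)$), the finite matrix algebra being read off as the corner of $\M_\G(A_R(\mG))(\G)$ supported on the indices $\g_1^{-1},\dots,\g_{n+1}^{-1}$. You instead rerun the whole representation-plus-universality argument of Proposition~\ref{algebrafor} directly in the finite setting, with positions indexed by $(i,j)\in\{0,\dots,n\}^2$. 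Your version is longer but arguably more robust: the paper's map $x_{ij}\mapsto x_{\g_{i+1}^{-1},\g_{j+1}^{-1}}$ fails to be injective when the $\g_i$ are not pairwise distinct (if $\g_1=\g_2$ then $x_{00}$ and $x_{11}$ collide), so the reduction as written only covers distinct shifts, whereas your direct construction is insensitive to repetitions; the cost is that you must reverify (R1)--(R3) and the bijectivity, which you correctly note are routine adaptations. One caveat, which you inherit from the statement itself rather than introduce: with the grading of $\M_{n+1}(A)(\overline{\sigma})$ as defined in \S\ref{frankairport} (the $(k,l)$ entry of the $\g$-component lies in $A_{\sigma_k\g\sigma_l^{-1}}$), an entry of degree $\g_{i+1}^{-1}\g\g_{j+1}$ placed at position $(i+1,j+1)$ sits in the $\g$-component of $\M_{n+1}(A)(\g_1^{-1},\dots,\g_{n+1}^{-1})$, not of $\M_{n+1}(A)(\g_1,\dots,\g_{n+1})$; so the step ``lies, by definition of the grading, in the $\g$-component'' requires either inverting the shift in the target or replacing $\psi(x_{ii})=\g_{i+1}$ by $\g_{i+1}^{-1}$. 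This convention mismatch is already present in the paper and does not affect the substance of your argument.
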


\begin{proof} 
We observe that $\mG_{f}(\psi)\xra\mG_\G, x_{ij}\mapsto x_{\g_{i+1}^{-1}\g_{j+1}^{-1}}$ is a $\G$-graded isomorphism as groupoids. The proof now follows from  Proposition \ref{algebrafor}. 
\end{proof}

If $\G$ is a trivial group, we have the non-graded version of the above result.

\begin{cor} Let $\mG$ be an ample  groupoid and $R$ a commutative ring with unit.  Suppose that $f:\mG^{(0)}\xra \Z$ with $\Z$ a discrete group is a constant function such that $f(x)=n$ with $n$ a positive integer for any $x\in \mG^{(0)}$. Then $A_R(\mG_f)\cong \M_{n+1}(A_R(\mG))$.
\end{cor}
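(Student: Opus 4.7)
The plan is to deduce this corollary immediately by specialising Proposition \ref{stegrrecon} to the trivial grading. Every ample groupoid $\mG$ admits the trivial $\{\varepsilon\}$-grading via the constant cocycle $c: \mG \xra \{\varepsilon\}$, and the corollary is just the ungraded statement corresponding to this situation.

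Taking $\G = \{\varepsilon\}$ in Proposition \ref{stegrrecon}, the tuple $(\g_1, \ldots, \g_{n+1})$ is forced to be $(\varepsilon, \ldots, \varepsilon)$ and the map $\psi: \mG_f^{(0)} \xra \{\varepsilon\}$ is constant. The induced cocycle $c_\psi$ on $\mG_f$ from \eqref{gradingmap} then sends everything to $\varepsilon$, so $\mG_f(\psi)$ is simply $\mG_f$ equipped with the trivial grading and its Steinberg algebra inherits the trivial $\{\varepsilon\}$-grading. On the algebra side, the graded matrix ring $\M_{n+1}(A_R(\mG))(\varepsilon, \ldots, \varepsilon)$ defined in \S\ref{frankairport} has every homogeneous piece indexed by conjugates $\varepsilon \g \varepsilon^{-1}$, forcing $\g = \varepsilon$, so the whole $\varepsilon$-component equals the full matrix ring and the grading carries no extra information; as a ring this is nothing but $\M_{n+1}(A_R(\mG))$.

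Applying Proposition \ref{stegrrecon} under this specialisation therefore yields precisely the required $R$-algebra isomorphism $A_R(\mG_f) \cong \M_{n+1}(A_R(\mG))$. No genuine obstacle arises since the corollary is a direct translation of the proposition to the trivially graded setting; all the substantive work (constructing the representation of $B^{\rm co}_*(\mG_f)$ in the matrix ring, verifying relations (R1)--(R3), and checking that the induced map from the universal property is both injective and surjective) has already been carried out in the proof of the proposition.
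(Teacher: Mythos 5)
Your proposal is correct and is exactly the paper's intended argument: the corollary is introduced with the remark that it is the non-graded version of Proposition~\ref{stegrrecon}, obtained by taking $\G$ to be the trivial group, which is precisely the specialisation you carry out. Your additional checks (that $c_\psi$ becomes the trivial cocycle and that the graded matrix ring $\M_{n+1}(A_R(\mG))(\varepsilon,\dots,\varepsilon)$ collapses to the ordinary matrix ring) are sound and merely make explicit what the paper leaves implicit.
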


\subsection{Steinberg algebras and graded Morita context}
Let $\mG$ be a $\G$-graded ample groupoid and $ R$ a commutative ring with unit. Recall that the characteristic functions of compact open subsets of $\mG^{(0)}$ are graded local units for the Steinberg algebra $A_R(\mG)$.

We show that the graded $(\mG, \HH)$-equivalence induces a graded Morita equivalence on the Steinberg algebra level. 
 Let $\mG$ and $\HH$ be $\G$-graded ample groupoids. Suppose that $Z$ is a $\G$-graded $(\mG,\HH)$-equivalence with linking
groupoid $L$. Let $i$ denote the inclusion maps from $A_R(\mG)$ and $A_R(\HH)$  into $A_R(L)$. Define 
$$M :=\big \{f \in A_R(L)\;|\; \supp(f) \sub Z \big \} \, \, \text{ and } \, \,  N :=\big \{ f \in A_R(L)\;|\;  \supp(f) \sub Z^{\rm op}\big \}.$$
Let $A_R(\mG)$ and $A_R(\HH)$ act on the left and right of $M$ and on the right and left of $N$ by $a\cdot f = i(a)*f$ and $f\cdot a = f* i(a)$. Then there are $\G$-graded bimodule homomorphisms
$$\psi:M\otimes_{i(A_R(\HH))}N\xra A_R(\mG) \, \, \text{ and } \, \,  \varphi:N\otimes_{i(A_R(\mG))}M\xra A_R(\HH)$$ determined by     
$i \psi(f\otimes g) =f*g$ and $i \varphi(g\otimes f) =g*f$.

We are in a position to relate the graded equivalence of groupoids to equivalence of their associated Steinberg algebras. 

\begin{thm}\label{hfhgwolfsheim}
 Let $\mG$ and $\HH$ be $\G$-graded ample groupoids. Suppose that $Z$ is a $\G$-graded $(\mG,\HH)$-equivalence with $\G$-graded linking groupoid $L$.
The tuple $(A_R(\mG), A_R(\HH), M, N, \psi, \varphi)$ is a surjective graded Morita context, and so $A_R(\mG)$ and $A_R(\HH)$ are graded Morita equivalent.
\end{thm}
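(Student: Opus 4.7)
The plan is to verify that the tuple satisfies the two conditions in the definition of a graded Morita context from~\S\ref{coincigftr}, establish that $\psi$ and $\varphi$ are surjective, and then invoke~\cite[Theorem 2.6]{haefner}. Since $\mG$, $\HH$, $Z$ and $Z^{\rm op}$ are pairwise disjoint clopen subsets of the linking groupoid $L$, the inclusion $i$ realises $A_R(\mG)$ and $A_R(\HH)$ as graded subalgebras of $A_R(L)$, and $M$, $N$ are the $R$-submodules of $A_R(L)$ spanned by $1_B$ with $B$ a compact open bisection of $L$ contained in $Z$ (respectively $Z^{\rm op}$). The required bimodule structures on $M$ and $N$ are simply the restrictions of convolution in $A_R(L)$, well-defined because $\mG\cdot Z\cdot\HH\subseteq Z$ and $\HH\cdot Z^{\rm op}\cdot\mG\subseteq Z^{\rm op}$ in $L$. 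Declaring $M_\gamma=\{m\in M : \supp(m)\subseteq k^{-1}(\gamma)\}$ and similarly for $N$, the cocycle identity on $L$ gives $A_R(\mG)_\alpha M_\beta A_R(\HH)_\gamma\subseteq M_{\alpha\beta\gamma}$ and its analogue for $N$, verifying condition~(1).

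The maps $\psi$ and $\varphi$ are themselves restrictions of convolution, so $R$-bilinearity and balancing over $A_R(\HH)$ (respectively $A_R(\mG)$) are automatic, as are the grading compatibilities $\psi(M_\alpha\otimes N_\beta)\subseteq A_R(\mG)_{\alpha\beta}$ and $\varphi(N_\alpha\otimes M_\beta)\subseteq A_R(\HH)_{\alpha\beta}$. The two associativity identities $\psi(m\otimes n)m'=m\varphi(n\otimes m')$ and $\varphi(n\otimes m)n'=n\psi(m\otimes n')$ both reduce to associativity of convolution in $A_R(L)$ together with the fact that $i$ is an algebra embedding, so condition~(2) also holds.

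The main obstacle is surjectivity of $\psi$; surjectivity of $\varphi$ will follow by the symmetric argument using $Z^{\rm op}$. It suffices to show that every $1_U$ with $U$ a compact open bisection of $\mG$ lies in the image of $\psi$. Fix $g\in U$: since $r_Z\colon Z\to \mG^{(0)}$ is surjective, pick $h\in Z$ with $r_Z(h)=s(g)$; then in the linking groupoid $(g\cdot h)\,\overline{h}={}_\mG[g\cdot h,h]=g$, with $g\cdot h\in Z$ and $\overline{h}\in Z^{\rm op}$. Ampleness of $L$ yields compact open bisections $V\ni g\cdot h$ with $V\subseteq Z$ and $W\ni \overline{h}$ with $W\subseteq Z^{\rm op}$, and then $1_V*1_W=1_{VW}$, where $VW$ is a compact open bisection of $\mG$ containing $g$. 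Shrinking $V$ and $W$ if necessary, we may assume $VW\subseteq U$. By compactness, finitely many such products cover $U$, and the Boolean algebra of compact open bisections of $\mG$ lets us refine the cover to a disjoint one and express $1_U$ as a finite $\Z$-linear combination of the $\psi(1_{V_i}\otimes 1_{W_i})$. With surjectivity established, \cite[Theorem 2.6]{haefner} yields that $A_R(\mG)$ and $A_R(\HH)$ are graded Morita equivalent.
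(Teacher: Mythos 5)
Your proof is correct, but it takes a genuinely different route from the paper. The paper's proof is essentially a citation: it invokes \cite[Theorem 5.1]{cs} for the fact that $(A_R(\mG), A_R(\HH), M, N, \psi, \varphi)$ is a surjective (ungraded) Morita context, asserts that the graded compatibility is ``evident'', and then applies \cite[Theorem 2.6]{haefner}. You instead reconstruct the entire verification from scratch: you check conditions (1) and (2) of the graded Morita context directly from the convolution structure on $A_R(L)$ and the cocycle $k$ on the linking groupoid, and you prove surjectivity of $\psi$ by producing, for each $g$ in a compact open bisection $U\sub\mG$, a factorisation $g=(g\cdot h)\overline{h}$ with $h\in Z$, covering $U$ by products $V_iW_i$ of compact open bisections $V_i\sub Z$, $W_i\sub Z^{\rm op}$, and refining. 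What your approach buys is self-containedness and transparency about exactly where the grading enters (through $k$ and the identity $k(\overline{z})=k(z)^{-1}$), which the paper's ``evident'' elides; what the paper's approach buys is brevity and reuse of an established result whose proof is essentially the one you gave. One small imprecision at the end of your argument: after disjointifying the cover $U=\bigcup_i V_iW_i$, the pieces $U_i'$ of the disjoint refinement are not literally $\Z$-linear combinations of the original $\psi(1_{V_i}\otimes 1_{W_i})$; rather, each $U_i'$ (or each intersection occurring in an inclusion--exclusion expansion) is a compact open sub-bisection of some $V_iW_i$ and must itself be rewritten as $V_i''W_i''$ with $V_i''=r|_{V_i}^{-1}(r(U_i'))$ and $W_i''=s|_{W_i}^{-1}(s(U_i'))$, whence $1_{U_i'}=\psi(1_{V_i''}\otimes 1_{W_i''})$. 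This is routine and does not affect the validity of the proof.
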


\begin{proof}
We refer to \cite[Theorem 5.1]{cs} for the proof  that $(A_R(\mG), A_R(\HH), M, N, \psi, \varphi)$ is a surjective Morita context. It is evident that the tuple $(A_R(\mG), A_R(\HH), M, N, \psi, \varphi)$ is graded surjective Morita context. By \cite[Theorem 2.6]{haefner}, $A_R(\mG)$ and $A_R(\HH)$ are graded Morita equivalent as $\G$-graded rings.
\end{proof}

\section{Graded homology of \'etale groupoids}\label{bellesebasatian}

In this section, we introduce graded homology groups for a graded \'etale groupoid $\mG$ and prove that when the groupoid is strongly graded and ample, its graded homology groups are isomorphic to the homology groups for the $\varepsilon$-th component $\mG_{\varepsilon}$ (Theorem \ref{strongly}). 
 We then establish a short exact sequence \[H_0^{\gr}(\mG)\longrightarrow H_0^{\gr}(\mG) \longrightarrow H_0(\mG) \longrightarrow 0,\] for a graded ample groupoid $\mG$ (Theorem~\ref{ses}). 
This is in line with the van den Bergh exact sequence for graded $K$-theory of a $\Z$-graded regular Noetherian ring $R$ (\cite[\S6.4]{grbook}) 
\[ \longrightarrow K_n^{\gr}(R)\longrightarrow K_n^{\gr}(R)\longrightarrow K_{n}(R) \longrightarrow K_{n-1}^{\gr}(R) \longrightarrow \cdots,\]
and suggests that we might have a long exact sequence
\[ \longrightarrow H_n^{\gr}(\mG)\longrightarrow H_n^{\gr}(\mG)\longrightarrow H_{n}(\mG) \longrightarrow H_{n-1}^{\gr}(\mG) \longrightarrow \cdots .\]

In Section~\ref{sectionsix} we show that for the graph groupoid $\mG_E$ associated to an arbitrary graph $E$, we have $H_0^{\gr}(\mG_E) \cong K_0^{\gr}(L_R(E))$, where $L_R(E)$ is the Leavitt path algebra with the coefficient field $R$.

\begin{thm}\label{KakutaniMori}
Let $R$ be a commutative ring with unit and let $\mG_1$ and $\mG_2$ be $\G$-graded ample groupoids. If $\mG_1$ and $\mG_2$ are $\G$-graded Kakutani equivalent, then the categories $\Gr_R \mG_1$ and $\Gr_R \mG_2$ are graded equivalent. 
\end{thm}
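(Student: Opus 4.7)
The plan is to build a graded equivalence of groupoid spaces connecting $\mG_1$ and $\mG_2$, then pass to Steinberg algebras via Theorem~\ref{hfhgwolfsheim}, and finally transfer to the sheaf categories.

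By Definition~\ref{gradedkak} there exist full clopen subsets $Y_i \subseteq \mG_i^{(0)}$ for $i=1,2$ and a graded isomorphism $\varphi: \mG_1|_{Y_1} \xra \mG_2|_{Y_2}$. I would first build a $\G$-graded $(\mG_i, \mG_i|_{Y_i})$-equivalence, namely the space $Z_i := \{g \in \mG_i \mid s(g) \in Y_i\}$ with left $\mG_i$- and right $\mG_i|_{Y_i}$-actions by composition and grading induced by the cocycle $c: \mG_i \to \G$. The axioms of groupoid equivalence (see \S\ref{grgrim}) are straightforward to verify: freeness and properness come from the groupoid structure; the range map induces a homeomorphism $Z_i/\mG_i|_{Y_i} \cong \mG_i^{(0)}$ (surjectivity being exactly $\mG_i$-fullness of $Y_i$) and the source map induces $\mG_i \backslash Z_i \cong Y_i$; the grading compatibility is immediate since left and right actions preserve the cocycle value. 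The graded isomorphism $\varphi$ provides a (trivial) graded $(\mG_1|_{Y_1}, \mG_2|_{Y_2})$-equivalence. Composing the three equivalences via the standard tensor-product construction for equivalence bimodules produces a $\G$-graded $(\mG_1, \mG_2)$-equivalence.

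Applying Theorem~\ref{hfhgwolfsheim} to this equivalence yields that the Steinberg algebras $A_R(\mG_1)$ and $A_R(\mG_2)$ are graded Morita equivalent. Since graded Morita equivalence of rings with graded local units is, by \S\ref{coincigftr}, equivalent to an equivalence of their categories of unital graded modules, we obtain $\Gr A_R(\mG_1) \simeq \Gr A_R(\mG_2)$ as graded categories. The final step is to invoke the sheaf--module correspondence $\Gr_R \mG \simeq \Gr A_R(\mG)$ for a $\G$-graded ample groupoid, which reduces to the ungraded correspondence $\Modd_R \mG \simeq \Modd A_R(\mG)$ (standard for ample groupoids with local units) together with the identification \eqref{keymom} applied on both sides.

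The main obstacle I anticipate is twofold. First, the composition of graded groupoid equivalences must be shown to again be a graded equivalence; while natural, this requires verifying that the tensor-product quotient respects the grading and that all freeness, properness, and continuity conditions survive. Second, one must check that the entire chain of equivalences is compatible with the shift functors $\mathcal{T}_\alpha$, so that the resulting equivalence $\Gr_R \mG_1 \simeq \Gr_R \mG_2$ is genuinely a graded equivalence; this should follow routinely from the naturality of \eqref{keymom} under graded morphisms of groupoids, together with the observation that the shift on $\Gr_R\mG$ corresponds on the skew-product side to translation in the $\G$-coordinate, which is manifestly preserved by every functor in the chain.
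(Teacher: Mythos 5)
Your argument is essentially sound, but it is not the route the paper's primary proof takes; it coincides instead with the alternative proof the paper itself sketches immediately after the theorem. The paper's own proof stays entirely on the algebra side: it observes that $1_{Y_i}$ is a homogeneous \emph{full} idempotent of $A_R(\mG_i)$ (fullness of the idempotent being exactly $\mG_i$-fullness of $Y_i$), that the corner satisfies $1_{Y_i}A_R(\mG_i)1_{Y_i}\cong A_R(\mG_i|_{Y_i})$, and that a graded ring with graded local units is graded Morita equivalent to a full homogeneous corner; the chain $\Gr_R \mG_1\cong \Gr A_R(\mG_1)\cong \Gr 1_{Y_1}A_R(\mG_1)1_{Y_1}\cong \Gr A_R(\mG_1|_{Y_1})\cong \Gr A_R(\mG_2|_{Y_2})\cong \Gr A_R(\mG_2)\cong \Gr_R \mG_2$ then finishes the proof without ever constructing an equivalence space. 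Your geometric route --- building the $\G$-graded $(\mG_i,\mG_i|_{Y_i})$-equivalence $s^{-1}(Y_i)$, composing, and applying Theorem~\ref{hfhgwolfsheim} together with \cite[Theorem 2.6]{haefner} --- buys the stronger intermediate statement that $\mG_1$ and $\mG_2$ are graded groupoid equivalent, at the cost of having to verify transitivity of graded groupoid equivalence (the tensor-product composition), which you correctly identify as the main burden and which the paper's alternative proof likewise asserts without detail. The corner argument is shorter and needs no groupoid-equivalence machinery at all.

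One genuine wrinkle in your final step: you propose to obtain the graded sheaf--module correspondence $\Gr_R \mG\simeq \Gr A_R(\mG)$ by combining the ungraded correspondence with the identification~\eqref{keymom}. In this paper \eqref{keymom} is \emph{deduced from} Theorem~\ref{KakutaniMori} (applied to $\mG$ and $\mG_\G$), so that reduction would be circular. The correspondence you need is available directly --- the paper quotes \cite[Proposition~3.3]{clarkhazratrigby} for it --- so the repair is simply to cite that result on both ends of your chain instead of appealing to \eqref{keymom}; with that substitution your proof goes through.
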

\begin{proof}

There are full clopen subsets $Y_i\subseteq \mG_i^{(0)}$, for $i=1,2$, such that $\mG_1|_{Y_1}$ is graded isomorphic to $\mG_2|_{Y_2}$. By ~\cite[Proposition~3.3]{clarkhazratrigby} the categories  $\Gr_R \mG_1$ and $\Gr A_R(\mG)$ are equivalent. Since $1_{Y_{i}}$ are homogeneous full idempotent of $A_R(\mG_i)$,  and $1_{Y_{i}} A_R(\mG_i) 1_{Y_{i}}  \cong A_R(\mG_i|_{Y_i})$, where $i=1,2$, we have 
\begin{equation*}
\begin{split}
 \Gr_R \mG_1 & \cong \Gr A_R(\mG_1) \cong  \Gr 1_{Y_{1}} A_R(\mG_1) 1_{Y_{1}}  \cong \Gr A_R(\mG_1|_{Y_1})\\
 & \cong \Gr A_R(\mG_2|_{Y_2})
 \cong   \Gr 1_{Y_{2}} A_R(\mG_2) 1_{Y_{2}} \\
 &\cong    \Gr A_R(\mG_2) \cong  \Gr_R \mG_2. \qedhere
\end{split}
\end{equation*} 
\end{proof}

One can  provide an alternative proof for Theorem~\ref{KakutaniMori} as follows. There are full clopen subsets $Y_i\subseteq \mG_i^{(0)}$, for $i=1,2$, such that $\mG_1|_{Y_1}$ is graded isomorphic to $\mG_2|_{Y_2}$. By \cite[Lemma 6.1]{cs} $s^{-1}(Y_1)$ is $\G$-graded $(\mG_1, \mG_1|_{Y_1})$-equivalence. Similarly $r^{-1}(Y_2)$ is $\G$-graded $(\mG_2, \mG_2|_{Y_2})$-equivalence. Graded groupoid equivalence is an equivalence relation. It follows that $\mG_1$ and $\mG_2$ are graded groupoid equivalent. By Theorem \ref{hfhgwolfsheim} and \cite[Theorem 2.6]{haefner} we have the category equivalece $\Gr A_R(\mG_1)\cong \Gr A_R(\mG_2)$. Recall from~\cite[Proposition~3.3]{clarkhazratrigby} that the categories  $\Gr_R \mG_1$ and $\Gr A_R(\mG)$ are equivalent. Thus $\Gr_R \mG_1\cong \Gr A_R(\mG_1)\cong \Gr A_R(\mG_2)\cong\Gr_R \mG_1$.

Since $\mG$ and $\mG_\G$ are graded Kakutani and the latter groupoid is strongly graded (Lemma~\ref{prinp}), combining Theorem~\ref{KakutaniMori} and 
\cite[Theorem 3.10]{clarkhazratrigby}, we obtain
\begin{equation}\label{keymom}
\Gr_R \mG \cong \Gr_R \mG_\G \cong \Modd_R(\mG_\G)_\varepsilon \cong \Modd_R \mG\times_c \G.
\end{equation}

The equivalence~(\ref{keymom}) justifies the use of $\mG\times_c \G$ in order to define the graded version of homology of the $\G$-graded groupoid $\mG$.

\subsection{Graded homology groups for \'etale groupoids}\label{subsection41}

Let $X$ be a locally compact Hausdorff space and $R$ a topological abelian group. Let $\G$ be a discrete group acting from right continuously on $X$, i.e.,   there is a group homomorphism $\G\xra {\rm Home}(X)^{\rm op}$, where ${\rm Home}(X)$ consists of homeomorphisms from $X$ to $X$ with the multiplication given by the composition of maps and ${\rm Home}(X)^{\rm op}$ is the opposite group of ${\rm Home}(X)$. Denote by $C_c(X,R)$  the set of $R$-valued continuous functions with compact support. With point-wise addition, $C_c(X,R)$ is an abelian group. For $\gamma \in \Gamma$ and $f\in C_c(X,R)$, defining $\gamma f (x):=f(x\gamma)$, makes $C_c(X,R)$ a left $\G$-module. 

Let $\pi:X\to Y$ be a local homeomorphism 
between locally compact Hausdorff spaces which respect the right $\G$-action, namely $\pi(x \gamma)=\pi(x)\gamma$.  
For $f\in C_c(X,R)$, define the map $\pi_*(f):Y\to R$ by 
\[
\pi_*(f)(y)=\sum_{\pi(x)=y}f(x). 
\]
One can check that $\pi_*$ satisfies $\g\pi_*(f)=\pi_*(\g f)$ for $f\in C_c(X, R)$. Thus $\pi_*$ is a $\Gamma$-homomorphism from $C_c(X,R)$ to $C_c(Y,R)$ which makes $C_c(-,R)$  a functor from the category of right $\G$-locally compact Hausdorff spaces with $\G$-local homeomorphisms to the category of left $\G$-modules.  

For a groupoid $\mG$, we denote by ${\rm Aut}(\mG)$ the group of homeomorphisms between 
$\mG$ which respect the composition of $\mG$. We say that a discrete group $\G$ continuously acts on $\mG$ from right side if $\alpha: \G\xra {\rm Aut}(\mG)^{\rm op}$ is a group homomorphism with ${\rm Aut}(\mG)^{\rm op}$ the opposite group of ${\rm Aut}(\mG)$.

Let $\mG$ be an \'etale groupoid and $\G$ a discrete group acting continuously on $\mG$ from right side. For $n\in\N$, we write $\mG^{(n)}$ 
for the space of composable strings of $n$ elements in $\mG$, that is, 
\[
\mG^{(n)}=\{(g_1,g_2,\dots,g_n)\in\mG^n\mid
s(g_i)=r(g_{i+1})\text{ for all }i=1,2,\dots,n{-}1\}. 
\]
For $i=0,1,\dots,n$, with $n\geq 2$
we let $d_i:\mG^{(n)}\to \mG^{(n-1)}$ be a map defined by 
\[
d_i(g_1,g_2,\dots,g_n)=\begin{cases}
(g_2,g_3,\dots,g_n) & i=0 \\
(g_1,\dots,g_ig_{i+1},\dots,g_n) & 1\leq i\leq n{-}1 \\
(g_1,g_2,\dots,g_{n-1}) & i=n. 
\end{cases}
\]
When $n=1$, we let $d_0,d_1:\mG^{(1)}\to\mG^{(0)}$ be 
the source map and the range map, respectively. 
Clearly the maps $d_i$ are local homeomorphisms which respects the right $\G$-actions.

Define the $\G$-homomorphisms $\partial_n:C_c(\mG^{(n)},R)\to C_c(\mG^{(n-1)},R)$ 
by 
\begin{equation} \label{hdtfjdjjd}
\partial_1=s_*-r_* \text{~~~~and~~~~}
\partial_n=\sum_{i=0}^n(-1)^id_{i*}. 
\end{equation}
One can check that the sequence 
\begin{equation}\label{moorcomlex}
0\stackrel{\partial_0}{\longleftarrow}
C_c(\mG^{(0)},R)\stackrel{\partial_1}{\longleftarrow}
C_c(\mG^{(1)},R)\stackrel{\partial_2}{\longleftarrow}
C_c(\mG^{(2)},R)\stackrel{\partial_3}{\longleftarrow}\cdots
\end{equation}
is a chain complex of left $\G$-modules.

The following definition comes from \cite{crainicmoerdijk,matui} with an added structure of a group acting on the given groupoid from right.

\begin{deff}\label{homology}({\it Homology groups of a groupoid $\mG$})
Let $\mG$ be an \'etale groupoid and $\G$ a discrete group acting continuously from right on $\mG$. Define the homology groups of $\mG$ with coefficients $R$, $H_n(\mG,R)$, $n\geq 0$, to be the homology groups of the Moore complex~(\ref{moorcomlex}), 
i.e., $H_n(\mG,R)=\ker\partial_n/\Ima\partial_{n+1}$, which are left $\G$-modules.  
When $R=\Z$, we simply write $H_n(\mG)=H_n(\mG,\Z)$. 
In addition, we define 
\[
H_0(\mG)^+=\{[f]\in H_0(\mG)\mid f(x)\geq0\text{ for all }x\in\mG^{(0)}\}, 
\]
where $[f]$ denotes the equivalence class of $f\in C_c(\mG^{(0)},\Z)$. 
\end{deff}

We are in a position to define  a graded homology theory for a graded groupoid.

\begin{deff}\label{homology}({\it Graded homology groups of a groupoid $\mG$})
Let $\mG$ be a $\G$-graded \'etale groupoid. Then the skew product $\mG \times_c \G$ is an \'etale groupoid with a right $\G$-action 
$(g,\alpha)\gamma=(g,\alpha\gamma)$. We define the graded homology groups of $\mG$ as 
\[
H_n^{\gr}(\mG,R):=H_n(\mG\times_c \G,R), n\geq 0
.\]
\end{deff} When $R=\Z$, we simply write $H^{\gr}_n(\mG)=H_n^{\gr}(\mG,\Z)$. In addition, we define 
\[
{H_0^{\gr}(\mG)}^+={H_0(\mG\times_c\G)}^+.\] Note that $H_n^{\gr}(\mG,R)$ is a left $\G$-module.

For a strongly $\G$-graded ample groupoid $\mG$, we prove that the graded homology $H_n^{\gr}(\mG)$ of $\mG$ is isomorphic to the homology $H_n(\mG_{\varepsilon})$ for the groupoid $\mG_{\varepsilon}$, where $\varepsilon$ is the identity for the group $\G$. 


Recall that a space is $\sigma$-compact if it has a countable cover by compact sets. 

\begin{thm}\label{strongly} Let $\mG$ be a strongly $\G$-graded ample groupoid such that $\mG^{(0)}$ is $\sigma$-compact. Then $H_n^{\gr}(\mG)\cong H_n(\mG_{\varepsilon})$ for each $n\geq 0$, where $\varepsilon$ is the identity of the group $\G$. 
\end{thm}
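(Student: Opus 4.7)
The plan is to identify $\mG_\varepsilon$ with the restriction of $\mG \times_c \G$ to a full clopen subset of its unit space, and then invoke the invariance of groupoid homology under Kakutani (equivalently, Morita) equivalence.

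First I would set $Y := \mG^{(0)} \times \{\varepsilon\}$, a clopen subset of the unit space $\mG^{(0)} \times \G$ of $\mG \times_c \G$. Applying the implication (1)$\Rightarrow$(2) of Theorem~\ref{tfse}---available because $\mG$ is strongly graded---with $\mG^{(0)}$ itself playing the role of the full clopen subset of $\mG_\varepsilon^{(0)}$, we obtain that $Y$ is $(\mG\times_c\G)$-full. A direct check shows that $g \mapsto (g,\varepsilon)$ is a topological groupoid isomorphism from $\mG_\varepsilon$ onto $(\mG\times_c\G)|_Y$: any $(g,\alpha)\in (\mG\times_c\G)|_Y$ satisfies $s(g,\alpha)=(s(g),\alpha)\in Y$ and $r(g,\alpha)=(r(g),c(g)\alpha)\in Y$, forcing $\alpha=\varepsilon=c(g)$, i.e.\ $g\in\mG_\varepsilon$. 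Thus $\mG_\varepsilon$ and $\mG\times_c\G$ are Kakutani equivalent in the sense of Definition~\ref{gradedkak} (with trivial grading).

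Next I would invoke the fact that Kakutani equivalent ample \'etale groupoids with $\sigma$-compact unit spaces have isomorphic homology groups. When the unit spaces are compact and totally disconnected this is \cite[Theorem~3.6]{matui15}; the extension to the $\sigma$-compact setting follows either from a standard direct-limit argument exhausting the unit space by compact clopen subsets, or from the general Morita invariance of groupoid homology due to Crainic--Moerdijk~\cite{crainicmoerdijk} applied to the canonical equivalence bibundle $s^{-1}(Y)\subseteq \mG\times_c\G$. Since $\mG^{(0)}$ is $\sigma$-compact and $\G$ is discrete, both $\mG_\varepsilon^{(0)}=\mG^{(0)}$ and $(\mG\times_c\G)^{(0)}=\mG^{(0)}\times\G$ are $\sigma$-compact, and the invariance applies. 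Combining,
\[
H_n^{\gr}(\mG) \;=\; H_n(\mG\times_c\G) \;\cong\; H_n\bigl((\mG\times_c\G)|_Y\bigr) \;\cong\; H_n(\mG_\varepsilon),
\]
which is the desired isomorphism.

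The main obstacle will be the precise form of Kakutani invariance of homology needed here: the cleanest statements in the literature assume compact totally disconnected unit spaces, whereas the hypothesis in the theorem only asks for $\sigma$-compactness. If no ready-made statement can be cited at this level of generality, my fallback is to argue at chain-complex level and construct an explicit quasi-isomorphism $C_c((\mG\times_c\G)^{(n)},\Z)\to C_c(\mG_\varepsilon^{(n)},\Z)$, using the push-forward along the local homeomorphism $s\colon s^{-1}(Y)\to Y$ together with strong grading to produce the compatible section required to invert it up to chain homotopy.
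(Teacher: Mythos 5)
Your proposal is correct and follows essentially the same route as the paper: the paper's proof also invokes Theorem~\ref{tfse} (with $Y=\mG^{(0)}$) to exhibit $\mG_\varepsilon$ and $\mG\times_c\G$ as Kakutani equivalent via the full clopen subset $\mG^{(0)}\times\{\varepsilon\}$, identifies $\mG_\varepsilon\cong(\mG\times_c\G)|_{Y_\varepsilon}$, and then cites Matui's invariance of $H_n$ under restriction to full clopen subsets (\cite[Proposition~3.5(2) and Theorem~3.6(2)]{matui}) to conclude.
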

\begin{proof} By Theorem \ref{tfse}, $\mG_{\varepsilon}$ and $\mG\times_c \G$ is Kakutani equivalent with respect to some full clopen subset of $\mG^{(0)}$ in $\mG_{\varepsilon}$ and $Y_{\varepsilon}=\{(y, \varepsilon)\;|\; y\in Y\}$ a full clopen subset of $\mG^{(0)}\times \G$ in $\mG\times_c \G$. Since $\mG$ is ample, $\mG_{\varepsilon}$ and $\mG\times_c \G$ are ample groupoids. By \cite[Theorem 3.6(2)]{matui} and \cite[Proposition 3.5(2)]{matui}, we have 
$H_n(\mG_{\varepsilon})\cong H_n(\mG_{\varepsilon}|_{Y})$ and 
$H_n(\mG\times_c \G)\cong H_n((\mG\times_c\G)|_{Y_{\varepsilon}})$ for each $n\geq 0$. Since $\mG_{\varepsilon}|_{Y}\cong (\mG\times_c\G)|_{Y_{\varepsilon}}$ as groupoids, it follows that $$H_n(\mG\times_c \G)\cong H_n((\mG\times_c\G)|_{Y_{\varepsilon}})\cong H_n(\mG_{\varepsilon}|_{Y})\cong H_n(\mG_{\varepsilon})$$ for each $n\geq 0$.
\end{proof}



Next we determine the image of the map $\partial_1$ in the Moore complex~\eqref{moorcomlex}.  Here we consider the coefficients of the homology in the ring $R$, with discrete topology. Recall also from~(\ref{hdgftdggd}) that the 
$B_*^{\rm co}(\mG)$ is the collection of all graded compact open bisections of $\mG$.

\begin{lemma} \label{imlemma}Let $\mG$ be a $\G$-graded ample groupoid. We have 
\[{\rm Im} \partial_1=R\-{\rm span}\big \{1_{s(U)}-1_{r(U)}\;|\; U\in B_*^{\rm co}(\mG)\big \}\] as an abelian subgroup of $C_c(\mG^{(0)}, R)$.
\end{lemma}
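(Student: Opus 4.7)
The plan is to prove the two inclusions separately; both are short because $\partial_1 = s_* - r_*$ interacts cleanly with the characteristic functions of bisections.

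For the inclusion $\supseteq$, I would take an arbitrary graded compact open bisection $U \in B_*^{\rm co}(\mathcal{G})$ and compute $\partial_1(1_U)$ directly. Since $U$ is a bisection, $s|_U\colon U \to s(U)$ is a homeomorphism; hence for $y \in \mathcal{G}^{(0)}$,
\[
s_*(1_U)(y) = \sum_{s(x)=y} 1_U(x) = \begin{cases} 1 & y \in s(U), \\ 0 & y \notin s(U), \end{cases}
\]
so $s_*(1_U) = 1_{s(U)}$, and symmetrically $r_*(1_U) = 1_{r(U)}$. Therefore $\partial_1(1_U) = 1_{s(U)} - 1_{r(U)}$, which lies in the claimed $R$-span, and taking $R$-linear combinations yields the inclusion.

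For the inclusion $\subseteq$, I would start from the universal expression~\eqref{express}: every $f \in A_R(\mathcal{G}) = C_c(\mathcal{G}, R)$ can be written as a finite sum $f = \sum_{U \in F} a_U 1_U$ where $F$ is a finite collection of mutually disjoint compact open bisections. Each such $U$ need not be graded, but since $\mathcal{G} = \bigsqcup_{\gamma \in \Gamma} \mathcal{G}_\gamma$ with each $\mathcal{G}_\gamma$ clopen and $U$ compact, only finitely many $\gamma \in \Gamma$ yield $U \cap \mathcal{G}_\gamma \ne \emptyset$, and each $U \cap \mathcal{G}_\gamma$ is again a compact open bisection, now graded. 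Refining the decomposition accordingly, I may assume from the outset that each $U \in F$ lies in $B_*^{\rm co}(\mathcal{G})$. Applying $\partial_1$ and using the previous paragraph gives
\[
\partial_1(f) = \sum_{U \in F} a_U \bigl(1_{s(U)} - 1_{r(U)}\bigr),
\]
which is in the required $R$-span, proving the other inclusion.

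There is no real obstacle here: the only points that require care are the observation that $s|_U$ is a homeomorphism onto $s(U)$ (which is what makes the pushforward $s_*(1_U)$ collapse to the characteristic function of the image without any multiplicity) and the verification that the refinement $U \mapsto \{U \cap \mathcal{G}_\gamma\}_\gamma$ produces only finitely many nonempty pieces, which follows from compactness of $U$ combined with the fact that the clopen decomposition by degree is locally finite on compact sets.
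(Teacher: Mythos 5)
Your proposal is correct and follows essentially the same route as the paper: both reduce to the spanning of $C_c(\mG,R)$ by characteristic functions of graded compact open bisections (the paper invokes Lemma~\ref{copenhagenairport} directly where you rederive the graded refinement from~\eqref{express}) and then compute $\partial_1(1_U)=1_{s(U)}-1_{r(U)}$, the paper phrasing this via $1_{U^{-1}U}-1_{UU^{-1}}$ and you via the fact that $s|_U$ and $r|_U$ are injective.
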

\begin{proof} Since $\mG$ is an ample groupoid, by Lemma~\ref{copenhagenairport} it suffices to show that $\partial_1(1_U)=1_{s(U)}-1_{r(U)}$, for each $U\in B_*^{\rm co}(\mG)$. For each $x\in \mG^{(0)}$, we have 
\begin{align*}
\partial_1(1_U)(x)
&=\sum_{\{g\in \mG\;|\; s(g)=x\}}1_U(g)-\sum_{\{g\in \mG\;|\; r(g)=x\}}1_U(g)\\
&=\sum_{\{g\in \mG\;|\; s(g)=x\}}1_U(g)-\sum_{\{g\in \mG\;|\; s(g)=x\}}1_U(g^{-1})\\
&=1_{U^{-1}U}(x)-1_{UU^{-1}}(x)\\
&=(1_{s(U)}-1_{r(U)})(x). \qedhere
\end{align*}
\end{proof}

In the next section we will consider the homology groups of graph groupoids which have a natural $\Z$-graded structure. For the remaining of this section, we will work with $\Z$-graded groupoids. For a $\Z$-graded ample groupoid, we establish an exact sequence relating the graded zeroth homology group to the non-graded version (Theorem~\ref{ses}). In order to achieve this, we will use the calculus of smash products on the level of Steinberg algebras first studied in~\cite{ahls} (see also~\S\ref{intheairfaroe}).  
 
 The following fact is needed in the proof of Theorem~\ref{ses} and we record it here. 

\begin{lemma} \label{general}Let $G$ be an abelian group. Then the following sequence of abelian groups is exact
\begin{align*}
\label{sexact}
\CD
    \bigoplus_{\Z}G@>{\Lambda}>>  \bigoplus_{\Z}G @>{\Sigma}>>  G@>>>0, 
\endCD
\end{align*} where $\Lambda(\{x_i\}_{i\in \Z})=\{x_{i-1}\}_{i\in\Z}-\{x_i\}_{i\in\Z}$ and $\Sigma(\{x_i\}_{i\in\Z})=\sum_i x_i$ for any $\{x_i\}_{i\in\Z}$ in $\bigoplus_{\Z}G$.
\end{lemma}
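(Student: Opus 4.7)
The plan is to verify exactness at each of the two positions of the sequence, checking the three standard conditions: surjectivity of $\Sigma$, the composition $\Sigma \circ \Lambda = 0$, and $\ker\Sigma \subseteq \mathrm{Im}\,\Lambda$.

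Surjectivity of $\Sigma$ is immediate: given $g \in G$, let $\{x_i\}$ be the sequence with $x_0 = g$ and $x_i = 0$ for $i \neq 0$; then $\Sigma(\{x_i\}) = g$. For the composition, a direct telescoping calculation gives
\begin{equation*}
\Sigma(\Lambda(\{x_i\})) = \sum_{i \in \Z}(x_{i-1} - x_i) = 0,
\end{equation*}
where the sum makes sense because only finitely many $x_i$ are nonzero.

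The main content is the reverse inclusion $\ker \Sigma \subseteq \mathrm{Im}\,\Lambda$, which I would handle by explicitly constructing a preimage via partial sums. Given $\{x_i\} \in \bigoplus_\Z G$ with $\sum_i x_i = 0$, define
\begin{equation*}
y_i = -\sum_{j \leq i} x_j \quad \text{for each } i \in \Z.
\end{equation*}
Since $\{x_i\}$ has finite support, for $i$ sufficiently negative we have $y_i = 0$, and for $i$ sufficiently positive we have $y_i = -\sum_{j \in \Z} x_j = 0$ by the kernel hypothesis. Hence $\{y_i\} \in \bigoplus_\Z G$. A one-line check shows
\begin{equation*}
y_{i-1} - y_i = -\sum_{j \leq i-1} x_j + \sum_{j \leq i} x_j = x_i,
\end{equation*}
so $\Lambda(\{y_i\}) = \{x_i\}$, completing the proof.

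There is no real obstacle here; the only mildly subtle point is making sure that the candidate preimage $\{y_i\}$ genuinely lies in the direct sum rather than the direct product, and this is exactly where the hypothesis $\sum_i x_i = 0$ is used (to force $y_i = 0$ for $i$ large positive). Everything else is a routine telescoping argument.
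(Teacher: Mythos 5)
Your proof is correct and follows essentially the same route as the paper: both verify surjectivity and $\Sigma\circ\Lambda=0$ directly, and both exhibit a preimage of an element of $\ker\Sigma$ via the negated partial sums $y_i=-\sum_{j\le i}x_j$, using the vanishing of the total sum to ensure the result lies in the direct sum. No issues.
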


\begin{proof}  Obviously, $\Sigma$ is surjective and $\Sigma\circ \Lambda=0$. It suffices to show that $\Ker \Sigma\sub {\rm Im} \Lambda$. Take  $\{x_i\}_{i\in\Z}\in \Ker \Sigma$. Denote by $j$  the minimal integer $i$ such that $x_{i}$ is nonzero. Denote by $k$ the maximal integer $i$ such that $x_{i}$ is nonzero. We have $\sum_{i=j}^{k}x_{i}=0$, since $\{x_i\}_{i\in\Z}\in \Ker \Sigma$. Let $\{y_i\}_{i\in\Z}$ be given by 
\begin{equation*}
y_i=
\begin{cases}-\sum_{l=j}^i x_l, & \text{if~} j\leq i;\\
0, & \text{otherwise.}
\end{cases}
\end{equation*} Observe that if $i>k$, we have $y_i=-\sum_{l=j}^ix_l=-\sum_{l=j}^kx_l-\sum_{l=k+1}^ix_l=0$. Thus we have $\{y_i\}_{i\in\Z}\in \bigoplus_{\Z}G$. By the definition of $\Lambda$, we have $\Lambda(\{y_i\}_{i\in \Z})=\{x_i\}_{i\in\Z}$, implying $\Ker \Sigma\sub {\rm Im} \Lambda$.
\end{proof}

We are in a position to prove the main theorem of this section. 

\begin{thm} \label{ses} Let $\mG$ be a $\Z$-graded ample groupoid. Then we have an exact sequence, 
\begin{align*}\CD
    H_0^{\gr}(\mG)@>{\widetilde{\Lambda}}>>  H_0^{\gr}(\mG) @>{\widetilde{\Sigma}}>>  H_0(\mG)@>>>0.
\endCD
\end{align*} 
\end{thm}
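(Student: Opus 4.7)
The strategy is to identify the chain-level objects in terms of the elementary group-theoretic Lemma~\ref{general} and then combine it with the explicit description of $\Ima \partial_1$ in Lemma~\ref{imlemma}. Since $(\mG \times_c \Z)^{(0)} = \mG^{(0)} \times \Z$ and $\Z$ is discrete, we have
\begin{equation*}
C_c\bigl((\mG \times_c \Z)^{(0)}, \Z\bigr) \;\cong\; \bigoplus_{n \in \Z} C_c(\mG^{(0)}, \Z),
\end{equation*}
and I will write elements as $\{f_n\}_{n \in \Z}$ with only finitely many $f_n$ nonzero. For $a\in C_c(\mG^{(0)},\Z)$, write $a\,\iota_n$ for the element supported in the $n$-th slot. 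The right $\Z$-action $(g,\alpha)\cdot m = (g,\alpha+m)$ on $\mG\times_c\Z$ induces the shift $\{f_n\}\mapsto\{f_{n-1}\}$ on coefficients, which is exactly the $\Lambda$ of Lemma~\ref{general}; and $\Sigma:\{f_n\}\mapsto\sum_n f_n$ is the corresponding augmentation. I define $\widetilde \Lambda$ and $\widetilde \Sigma$ as the maps induced by $\Lambda$ and $\Sigma$ on the quotients $H_0^{\gr}(\mG)$ and $H_0(\mG)$. Well-definedness of $\widetilde \Lambda$ is immediate from $\Z$-equivariance of $\widetilde \partial_1$. For $\widetilde \Sigma$, I apply Lemma~\ref{imlemma} to $\mG\times_c\Z$: a generator $1_{s(U)}-1_{r(U)}$ with $U=\bigsqcup_{i=1}^l U_i\times\{\gamma_i\}$ of degree $\alpha$ (as in~(\ref{normform})) satisfies
\begin{equation*}
\Sigma\bigl(1_{s(U)}-1_{r(U)}\bigr) \;=\; \sum_{i=1}^l\bigl(1_{s(U_i)}-1_{r(U_i)}\bigr) \;=\; \partial_1\Bigl(\sum_{i=1}^l 1_{U_i}\Bigr) \;\in\; \Ima\partial_1.
\end{equation*}

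The easy pieces are then in hand: $\widetilde \Sigma$ is surjective because any $f\in C_c(\mG^{(0)},\Z)$ lifts to $f\,\iota_0$, and $\widetilde \Sigma\circ\widetilde \Lambda = 0$ follows from $\Sigma\circ\Lambda=0$ in Lemma~\ref{general}.

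The crux is exactness at the middle. Let $\{f_n\}$ represent a class in $\ker\widetilde \Sigma$. Then $\Sigma(\{f_n\}) = \sum_n f_n \in \Ima\partial_1$, so by Lemma~\ref{imlemma} applied to $\mG$ we can write
\begin{equation*}
\sum_n f_n \;=\; \sum_{k} r_k\bigl(1_{s(V_k)} - 1_{r(V_k)}\bigr), \qquad V_k\in B^{\rm co}_{\alpha_k}(\mG),\;r_k\in\Z,
\end{equation*}
a finite sum. Split
\begin{equation*}
\{f_n\} \;=\; \Bigl(\{f_n\} - \bigl(\textstyle\sum_n f_n\bigr)\iota_0\Bigr) \;+\; \bigl(\textstyle\sum_n f_n\bigr)\iota_0.
\end{equation*}
The first summand has zero $\Sigma$-image, so by Lemma~\ref{general} it already lies in $\Ima\Lambda$, and hence its class is in $\Ima\widetilde \Lambda$. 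For the second summand I lift each $V_k$ to the graded compact open bisection $V_k\times\{0\}$ of $\mG\times_c\Z$ of degree $\alpha_k$, with source $s(V_k)\times\{0\}$ and range $r(V_k)\times\{\alpha_k\}$; Lemma~\ref{imlemma} then gives
\begin{equation*}
1_{s(V_k)}\,\iota_0 - 1_{r(V_k)}\,\iota_{\alpha_k} \;\in\; \Ima\widetilde \partial_1.
\end{equation*}
A telescoping computation shows $1_{r(V_k)}\,\iota_{\alpha_k} - 1_{r(V_k)}\,\iota_0$ equals $\pm\sum_j \Lambda(1_{r(V_k)}\,\iota_j)$ and hence lies in $\Ima\Lambda$. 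Adding the two, $(1_{s(V_k)} - 1_{r(V_k)})\,\iota_0 \in \Ima\widetilde \partial_1 + \Ima\Lambda$, so the class of $(\sum_n f_n)\iota_0$ lies in $\Ima\widetilde \Lambda$. Combining, $[\{f_n\}]\in\Ima\widetilde \Lambda$.

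The main obstacle in this plan is the second summand of the decomposition: the decomposition of $\Sigma(\{f_n\})$ as a sum of $\partial_1$-generators of $\mG$ does not lift directly to a sum of $\widetilde \partial_1$-generators of $\mG\times_c\Z$, because the natural lift $V_k\times\{0\}$ carries a shift $\alpha_k$ in the $\Z$-factor between its source and range. Absorbing this shift via the telescope $1_{r(V_k)}\,\iota_{\alpha_k} - 1_{r(V_k)}\,\iota_0\in\Ima\Lambda$ is precisely the step where the $\Z$-module structure on $H_0^{\gr}(\mG)$ enters, and it is the reason the theorem is formulated for grade group $\Z$.
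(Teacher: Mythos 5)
Your proposal is correct and follows essentially the same route as the paper: identify $C_c((\mG\times_c\Z)^{(0)},\Z)$ with $\bigoplus_{\Z}C_c(\mG^{(0)},\Z)$, invoke Lemma~\ref{general} and Lemma~\ref{imlemma}, check that $\Lambda$ and $\Sigma$ descend to the quotients, and chase. The only cosmetic difference is at the middle exactness step, where the paper packages your explicit decomposition and telescoping as the statement that $\Sigma$ restricted to $\sigma(\Ima\partial_1)$ surjects onto $\Ima\partial_1'$ (the lift $V_k\times\{0\}$ and the fact that $\ker\Sigma=\Ima\Lambda$ absorb the shift by $\alpha_k$), which is the same computation.
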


\begin{proof} 

Specialising the calculus of smash products to the case of $\Z$-graded groupoids, by~(\ref{saiso}) there is an isomorphism of $\Z$-graded algebras  $A_R(\mG\times_c \Z)\cong A_R(\mG)\# \Z$ which on the level of unit spaces gives rise to 
\[C_c\big ((\mG\times_c \Z)^{(0)}, \Z\big)\cong C_c(\mG^{(0)}, \Z)\#\Z.\] Observe that the multiplication in $C_c(\mG^{(0)}, \Z)\#\Z$ is given by $(fp_n) (gp_m)=\delta_{n, m}(fg)p_n$, for $f, g\in C_c(\mG^{(0)}, \Z)$ and $n,m\in\Z$ (see~(\ref{morningfaero})).
It follows that 
\begin{equation*}
\label{sigma}
\sigma: C_c\big ((\mG\times_c \Z)^{(0)}, \Z\big)\xrightarrow[]{\sim} C_c(\mG^{(0)}, \Z)\#\Z\xrightarrow[]{\sim} \bigoplus_{\Z}C_c(\mG^{(0)}, \Z)
\end{equation*} 
is an isomorphism of rings, sending $1_U$ to $\{f_k\}_{k\in\Z}$ such that $f_{n_i}=1_{U_i}$ and $f_k=0$ for all other $k$'s with $U=\bigsqcup_{i=1}^l U_i\times \{n_i\}$ satisfying $n_i\neq n_j$ for $i\neq j$. Thus for any $f\in C_c((\mG\times_c \Z)^{(0)}, \Z)$ there is a unique element $\{f_i\}_{i\in\Z}$ in $\bigoplus_{\Z}C_c(\mG^{(0)}, \Z)$ corresponding to $f$. Note that there are only finitely many $i\in \Z$ such that $f_i$ are nonzero.

Setting $G=C_c(\mG^{(0)}, \Z)$, by Lemma~\ref{general}, we the following exact sequence,

\begin{align}\label{exact}
\CD
    \bigoplus_{\Z}C_c(\mG^{(0)}, \Z)@>{\Lambda}>>  \bigoplus_{\Z}C_c(\mG^{(0)}, \Z) @>{\Sigma}>>  C_c(\mG^{(0)}, \Z)@>>>0.
\endCD
\end{align}

We observe that $\Lambda: \bigoplus_{\Z}C_c(\mG^{(0)}, \Z)\xra \bigoplus_{\Z}C_c(\mG^{(0)}, \Z)$ satisfies \begin{equation}
\label{preserve}
\Lambda\sigma({\rm Im} \partial_1)\sub \sigma({\rm Im} \partial_1),\end{equation} where $\partial_1: C_c((\mG\times_c \Z)^{(1)}, \Z)\xra C_c((\mG\times_c \Z)^{(0)}, \Z)$ is given by \eqref{hdtfjdjjd}
for the groupoid $\mG\times_c \Z$. By Lemma \ref{imlemma}, we only need to show that $\Lambda(\sigma(1_{s(U)}-1_{r(U)}))\in {\rm Im}\partial_1$, for $U\in B_n^{\rm co}(\mG\times_c\Z)$ with $n\in\Z$. By \eqref{normform}, we write 
$U = \bigsqcup^l_{i=1} U_i \times \{n_i\}$ with $n_i\neq n_j$ for $i\neq j$. Then we have 
\begin{align*}
\Lambda\sigma(1_{s(U)}-1_{r(U)})
=&\Lambda\sigma(1_{\bigsqcup_{i=1}^ls(U_i)\times \{n_i\}}-1_{\bigsqcup_{i=1}^lr(U_i)\times \{n+n_i\}})\\
=&\sigma(1_{\bigsqcup_{i=1}^ls(U_i)\times \{n_i+1\}}-1_{\bigsqcup_{i=1}^lr(U_i)\times \{n+n_i+1\}})-\sigma(1_{\bigsqcup_{i=1}^ls(U_i)\times \{n_i\}}-1_{\bigsqcup_{i=1}^lr(U_i)\times \{n+n_i\}})\\
=&\sigma(1_{s(U')}-1_{r(U')})-\sigma(1_{s(U)}-1_{r(U)})\in\sigma {\rm Im} \partial_1,
\end{align*} where $U'=\bigsqcup_{i=1}^l U_i\times\{n_i+1\}$. We have \begin{equation}
\label{isop}
\Sigma\sigma({\rm Im}\partial_1)\sub {\rm Im} \partial'_1,
\end{equation} where $\partial'_1: C_c(\mG^{(1)}, \Z)\xra C_c(\mG^{(0)}, \Z)$ is given by \eqref{hdtfjdjjd} for the groupoid $\mG$. Observe that \eqref{isop} follows from the following equalities for $U = \bigsqcup^l_{i=1} U_i \times \{n_i\}\in B_*^{\rm co}(\mG\times_c\Z)$
\begin{align*}
\Sigma\sigma(1_{s(U)}-1_{r(U)})
&=\Sigma\sigma(1_{\bigsqcup_{i=1}^ls(U_i)\times \{n_i\}}-1_{\bigsqcup_{i=1}^lr(U_i)\times \{n+n_i\}})\\
&=\sum_{i=1}^l(1_{s(U_i)}-1_{r(U_i)})\in{\rm Im}\partial'_1.
\end{align*}

The statement that the following sequence of abelian groups is exact 
\begin{align*}
\label{sss}
\CD
    \bigoplus_{\Z}C_c(\mG, \Z)/\sigma({\rm Im}\partial_1)@>{\Lambda}>>  \bigoplus_{\Z}C_c(\mG, \Z)/\sigma({\rm Im}\partial_1) @>{\Sigma}>>  C_c(\mG, \Z)/{{\rm Im}\partial'_1}@>>>0
\endCD
\end{align*}  follows from (\ref{exact}), \eqref{preserve}, \eqref{isop} and the fact that $\Sigma: \sigma({\rm Im}\partial_1)\xra {\rm Im}\partial'_1$ is surjective.

Recall that $H_0^{\gr}(\mG)=C_c((\mG\times_c\Z)^{(0)}, \Z)/{\rm Im}\partial_1$ and $H_0(\mG)=C_c(\mG^{(0)}, \Z)/{\rm Im}\partial'_1$. Let $\widetilde{\Lambda}: H_0^{\gr}(\mG)\xra H_0^{\gr}(\mG)$ be the composition 
\begin{equation*}\label{faero1}
H_0^{\gr}(\mG)\xrightarrow[]{\sigma} \bigoplus_{\Z}C_c(\mG, \Z)/\sigma({\rm Im}\partial_1)\xrightarrow[]{\Lambda}\bigoplus_{\Z}C_c(\mG, \Z)/\sigma({\rm Im}\partial_1)\xrightarrow[]{\sigma^{-1}}H_0^{\gr}(\mG)
\end{equation*}
and $\widetilde{\Sigma}: H_0^{\gr}(\mG)\xra H_0(\mG)$ be the composition 
\begin{equation*}\label{faero2}
H_0^{\gr}(\mG)\xrightarrow[]{\sigma} \bigoplus_{\Z}C_c(\mG, \Z)/\sigma({\rm Im}\partial_1)\xrightarrow[]{\Sigma}C_c(\mG, \Z)/{\rm Im}\partial'_1.
\end{equation*}

The theorem now  follows from the following commutative diagram whose second row is an exact sequence. 
\begin{equation*}
\begin{split}
\xymatrix{ 
	H^{\gr}_0(\mG)\ar[r]^{\widetilde{\Lambda}} \ar[d]^{\sigma}& H_0^{\gr}(\mG) \ar[r]^{\widetilde{\Sigma}} \ar[d]^{\sigma}& H_0(\mG)\ar[r]\ar[d]^{\rm id}&0 \\
	\bigoplus_{\Z}C_c(\mG, \Z)/\sigma({\rm Im}\partial_1)\ar[r]^{\Lambda} &\bigoplus_{\Z}C_c(\mG, \Z)/\sigma({\rm Im}\partial_1) \ar[r]^{\Sigma} & C_c(\mG, \Z)/{\rm Im}\partial'_1 \ar[r]& 0. }
\end{split}
\end{equation*}
\end{proof}

\section{Graded homology and graded Grothendieck group of path algebras}\label{sectionsix}

In this section, we first recall the concept of Leavitt path algebra of a graph $E$ and a basis for a Leavitt path algebra. We prove that a quotient of the diagonal of the Leavitt path algebra for the covering graph of $E$ is isomorphic to the graded Grothendieck group of the Leavitt path algebra $L_{R}(E)$ over a field $R$. This will be used to show that the for the graph groupoid $\mG_E$ associated to an arbitrary graph $E$, we have $H_0^{\gr}(\mG_E) \cong K_0^{\gr}(L_R(E))$.

We briefly recall the definition of a Leavitt path algebra and establish notation. For a comprehensive study of these algebras refer to \cite{lpabook}.

A directed graph $E$ is a tuple $(E^{0}, E^{1}, r, s)$, where $E^{0}$ and $E^{1}$ are
sets and $r,s$ are maps from $E^1$ to $E^0$. We think of each $e \in E^1$ as an edge pointing from $s(e)$ to $r(e)$. We use the convention that a (finite) path $p$ in $E$ is
a sequence $p=\a_{1}\a_{2}\cdots \a_{n}$ of edges $\a_{i}$ in $E$ such that
$r(\a_{i})=s(\a_{i+1})$ for $1\leq i\leq n-1$. We define $s(p) = s(\a_{1})$, and $r(p) =
r(\a_{n})$. 

A directed graph $E$ is said to be \emph{row-finite} if for each vertex $u\in E^{0}$,
there are at most finitely many edges in $s^{-1}(u)$. A vertex $u$ for which $s^{-1}(u)$
is empty is called a \emph{sink}, whereas $u\in E^{0}$ is called an \emph{infinite
emitter} if $s^{-1}(u)$ is infinite. If $u\in E^{0}$ is neither a sink nor an infinite
emitter, we call it a \emph{regular vertex}.

\begin{deff} \label{defleavitt} Let $E$ be a directed graph and $R$ a unital ring. The \emph{Leavitt path algebra} $L_{R}(E)$ of $E$ is the $R$-algebra generated by the set $\{v \mid v\in E^{0}\}\cup \{e \mid e\in E^{1}\}
\cup\{e^{*} \mid e\in E^{1}\}$ subject to the following relations:

\begin{enumerate}[\upshape(1)]
\item $uv=\delta_{u, v}v$ for every $u, v\in E^{0}$;
\smallskip

\item $s(e)e=er(e)=e$ for all $e\in E^{1}$;
\smallskip
\item $r(e)e^{*}=e^{*}=e^{*}s(e)$ for all $e\in E^{1}$;
\smallskip
\item $e^{*}f=\delta_{e, f}r(e)$ for all $e, f\in E^{1}$; and
\smallskip
\item $v = \sum_{e\in s^{-1}(v)}ee^{*}$ for every regular vertex $v\in
    E^{0}$.
\end{enumerate}
\end{deff} 

Let $\G$ be a group with identity $\varepsilon$, and let $w :E^{1}\xra \G$ be a function.
Extend $w$ to vertices and ghost edges by defining  $w(v) = \varepsilon$ for $v\in E^{0}$
and $w(e^{*}) = w(e)^{-1}$ for $e\in E^{1}$. The relations in Definition~\ref{defleavitt}
are homogeneous with respect to $w$ and makes $L_R(E)$ a $\G$-graded ring. The set of all finite sums of vertices in $E^{0}$ is a set of graded local units for $L_R(E)$ 
(see \cite[Lemma 1.2.12]{lpabook}). Furthermore,
$L_{R}(E)$ is unital if and only if $E^0$ is finite.

If $p=\alpha_{1}\alpha_2\cdots\alpha_{n}$ is a path in $E$ of length $n\geq 1$, we define $p^{*}=\alpha_{n}^*\cdots\alpha_2^*\alpha_{1}^*$. We have $s(p^*)=r(p)$ and $r(p^*)=s(p)$. For convention, we set $v^*=v$ for $v\in E^{0}$. We observe that for paths $p, q$ in $E$ satisfying $r(p)\neq r(q)$, $pq^*=0$ in $L_R(E)$. Recall that the Leavitt path algebra $L_R(E)$ is spanned by the following set
$$\big\{v, p, p^*, \eta\gamma^*\; |\; v\in E^0, p, \gamma, \text{~and~} \eta \text{~are ~nontrivial paths in ~} E \text{~with~} r(\gamma)=r(\eta)\big\}.$$
In general, this set is not $R$-linearly independent. 

In \cite{aajz} authors gave a basis for row-finite Leavitt path algebras over a field which can be extended to arbitrary graphs over commutative rings as well~\cite[Theorfem 2.7]{aragoodearl}. We need this result in the paper, thus we recall it here. 
 For each regular vertex $v\in E^0$, we fix an edge (called {\it special edge}) starting at $v$.

\begin{lemma}\cite[Theorem 1]{aajz}
\label{lbasis} 
The following elements form a basis for the Leavitt path algebra $L_R(E)$: 
\begin{enumerate}[\upshape(1)]

\item $v$, where $v\in E^0$;

\smallskip

\item  $p, p^*$, where $p$ is a nontrivial path in $E$;

\smallskip

\item  $qp^*$ with $r(p)=r(q)$, where
$p=\alpha_{1}\cdots\alpha_{m}$ and $q=\beta_{1}\cdots\beta_{n}$ are nontrivial paths of $E$ such that $\alpha_{m}\neq \beta_{n}$, or $\alpha_{m}=\beta_{n}$ which is not special.\hfill $\square$\end{enumerate}
\end{lemma}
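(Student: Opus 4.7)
The plan is to split the argument into two parts: (i) the listed elements span $L_R(E)$, and (ii) they are $R$-linearly independent. Spanning is a routine normal-form reduction using the Leavitt relations; linear independence is the substantive content, and I would establish it by passing through the Cohn path algebra.

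For (i), relations (1)--(4) of Definition~\ref{defleavitt} let me bring any product of the generators $v,e,e^*$ into one of the forms $v$, $p$, $p^*$, or $qp^*$ with $r(p)=r(q)$, by iterated simplification $e^*f=\delta_{e,f}r(e)$ and absorption of source/range vertices. It then remains to remove those $qp^*$ whose final edges $\alpha_m=\beta_n=e$ coincide and equal the fixed special edge at the regular vertex $v=s(e)$. Writing $p=p_0 e$, $q=q_0 e$ and applying relation (5) in the form $ee^*=v-\sum_{f\in s^{-1}(v),\,f\neq e}ff^*$ yields
\begin{equation*}
qp^* \;=\; q_0 p_0^* \;-\; \sum_{f\neq e}(q_0 f)(p_0 f)^*,
\end{equation*}
where each new term $(q_0 f)(p_0 f)^*$ ends in a non-special edge $f$, and $q_0 p_0^*$ is strictly shorter (degenerating to a path, ghost path, or vertex when one of $p_0,q_0$ is trivial). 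Induction on total path length closes the spanning argument.

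For (ii), I would consider the Cohn path algebra $C_R(E)$, defined by relations (1)--(4) alone. A standard faithful representation of $C_R(E)$ on the free $R$-module with basis indexed by $\{v,p,p^*,qp^*:r(p)=r(q)\}$ shows that this set is an $R$-basis of $C_R(E)$. Since $L_R(E)=C_R(E)/I$ with $I$ the two-sided ideal generated by $\{v-\sum_{e\in s^{-1}(v)}ee^*:v\text{ regular}\}$, the task reduces to identifying $I$ with the free $R$-submodule of $C_R(E)$ spanned by the non-basis monomials, namely those $qp^*$ whose last edges coincide and are special. One inclusion is precisely the spanning argument in (i). For the reverse inclusion, I would set up a rewriting/normal-form argument on $C_R(E)$: order the Cohn basis so that monomials ending in a special-edge pair are higher, check that each defining element of $I$ has its leading term on this non-basis list and reduces to an $R$-combination of strictly lower non-basis terms, and verify confluence via a diamond argument. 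It then follows that $I$ meets the $R$-span of the proposed basis in zero, so the proposed basis descends to an $R$-basis of $L_R(E)$.

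The main obstacle is this confluence/triangularity check on $C_R(E)$: one has to be sure that the special-edge choice does not create a cycle in the reduction order, particularly in the presence of loops and parallel edges at a single regular vertex. If the bookkeeping becomes unwieldy, a cleaner fallback is to define an $R$-algebra $A$ on the free $R$-module with the proposed basis, specifying multiplication directly by the rewriting rules above; one then verifies the Leavitt relations for the images of $v, e, e^*$ and obtains a surjection $L_R(E)\twoheadrightarrow A$ by universality. Since the basis elements of $L_R(E)$ map to an $R$-basis of $A$, they must be $R$-linearly independent in $L_R(E)$ as well.
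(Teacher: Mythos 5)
The paper offers no proof of this lemma at all: it is imported verbatim from \cite[Theorem 1]{aajz} (see also \cite[Theorem 2.7]{aragoodearl} for arbitrary graphs over commutative coefficient rings), so there is nothing in the paper to compare your argument against; I can only assess it on its own terms, and on those terms it is essentially correct and close in spirit to the standard proof. The spanning step is fine: in the rewrite $qp^*=q_0p_0^*-\sum_{f\neq e}(q_0f)(p_0f)^*$ the only output term that can still be reducible is $q_0p_0^*$, which is strictly shorter, so induction on $|p|+|q|$ closes. On independence, two points. First, the phrase ``identifying $I$ with the free $R$-submodule spanned by the non-basis monomials'' is not literally what you want, since those monomials do not lie in $I$; what your triangularity argument actually establishes (and all you need) is that $I$ is spanned by the elements $\gamma\bigl(v-\sum_{f\in s^{-1}(v)}ff^*\bigr)\eta^*$ with $v$ regular and $r(\gamma)=r(\eta)=v$, that these are in bijection with the non-basis monomials via their unique special-pair term $-\gamma ee^*\eta^*$, and that this correspondence is triangular for the length order, whence $C_R(E)=I\oplus{\rm span}(\text{proposed basis})$. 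You should also record why that set spans $I$: a Cohn monomial $\gamma\delta^*$ with $\delta$ nontrivial annihilates $v-\sum_{f}ff^*$, so only real paths (resp.\ ghost paths) survive as left (resp.\ right) multipliers of the generators. Second, your worry about confluence is unfounded: each non-basis monomial carries exactly one redex (the pair $ee^*$ at the junction), and every reduction either strictly shortens a monomial or produces irreducible monomials of the same length, so the system terminates and is trivially locally confluent even in the presence of loops and parallel edges; no diamond checking is required. With these two points tightened, either your main route or your fallback via a concretely presented algebra on the proposed basis gives a complete proof.
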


For a $\G$-graded ring $A$ one can consider the abelian monoid of isomorphism classes of graded finitely generated projective modules denoted by $\VV^{\gr}(A)$. For the precise definition of $\VV^{\gr}(A)$ for a graded ring with graded local units, refer to \cite[\S 4A]{ahls}. Recall that the shift functor $\mathcal{T}_\a: \Gr A \xra  \Gr A $ restricts to the category of graded finitely generated projective modules. Thus the group $\G$ acts on $\VV^{\gr}(A)$ which makes $\VV^{\gr}(A)$ a $\G$-module. The graded Grothendieck group, $K^{\gr}_0(A)$, is defined as the group completion of $\VV^{\gr}(A)$ which naturally inherits the $\G$-module structure of $\VV^{\gr}(A)$ and thus becomes a $\Z[\G]$-module. Here, $\Z[\G] $ is a group ring.

The monoid $\VV^{\gr}(L_R(E))$ for a $\G$-graded Leavitt path algebra of a graph $E$ over a field $R$ was studied in \cite[\S 5C]{ahls}.
In order to compute the monoid $\VV^{\gr}(L_{R}(E))$ for an arbitrary graph $E$, an abelian monoid $M_{E}^{\gr}$ for an arbitrary graph $E$ was defined in \cite[\S5.3]{ahls} as the free abelian monoid such that generators $\{a_{v}({\g}) \;|\;  v\in
E^{0}, \g\in \G\}$ are supplemented by generators $b_{Z}(\g)$, where $\g\in\G$ and $Z$ runs
through all nonempty finite subsets of $s^{-1}(u)$ for infinite emitters $u\in E^{0}$, subject to the relations

\begin{enumerate}
\item[(1)] $a_{v}({\g})=\sum\limits_{e\in s^{-1}(v)}a_{r(e)}(w(e)^{-1}\g)$  for all
    regular vertices $v\in E^{0}$ and $\g\in\G$;
    
    \smallskip
\item[(2)] $a_{u}({\g})=\sum\limits_{e\in Z}a_{r(e)}(w(e)^{-1}{\g})+b_{Z}({\g})$ for
    all $\g\in\G$, infinite emitters $u\in E^{0}$ and nonempty finite subsets
    $Z\subseteq s^{-1}(u)$;
    
        \smallskip
\item[(3)] $b_{Z_{1}}({\g})=\sum\limits_{e\in Z_{2}\setminus
    Z_{1}}a_{r(e)}({w(e)^{-1}\g})+b_{Z_{2}}({\g})$ for all $\g\in\G$, infinite
    emitters $u\in E^{0}$ and nonempty finite subsets $Z_{1}\subseteq Z_{2}\subseteq
    s^{-1}(u)$.
\end{enumerate} Recall that the group $\Gamma$ acts on the monoid $M_{E}^{\gr}$ as follows. For any $\b\in \G$,
\begin{equation} \label{groupaction1}
    \b\cdot a_{v}({\g})=a_{v}({\b\g})\qquad\text{ and }\qquad \b\cdot b_{Z}({\g})=b_{Z}({\b\g}).
\end{equation} There is a $\G$-module isomorphism 
$\VV^{\gr}(L_R(E))\cong M_E^{\gr}$ (see \cite[Proposition 5.7]{ahls}).

Let $\G$ be a group and $w:E^{1}\xrightarrow{} \G$ a function. As in \cite[\S2]{gr}, the
\emph{covering graph} $\overline{E}$ of $E$ with respect to $w$ is given by
\begin{gather}\label{lifeoflux}
    \overline{E}^{0} = \{v_{\a} \mid  v\in E^{0}\text{ and } \a\in\G\},\qquad
    \overline{E}^{1} = \{e_\a \mid e \in E^1\text{ and } \a \in \G\},\\
    s(e_\a) = s(e)_\a,\quad\text{ and } r(e_\a) = r(e)_{w(e)^{-1}\a}. \notag
\end{gather}

\begin{example}\label{onetwo3}
Let $E$ be a graph and define $w:E^1 \xrightarrow{} \Z$ by $w(e) = 1$ for all $e \in E^1$.
Then $\overline{E}$ (sometimes denoted $E\times_1 \mathbb{Z}$) is given by
\begin{gather*}
    \overline E^0 = \big\{v_n \mid v \in E^0 \text{ and } n \in \Z \big\},\qquad
    \overline E^1 = \big\{e_n \mid e\in E^1 \text{ and } n\in \Z \big\},\\
    s(e_n) = s(e)_n,\qquad\text{ and } r(e_n) = r(e)_{n-1}.
\end{gather*}

As examples, consider the following graphs
\begin{equation*}
{\def\labelstyle{\displaystyle}
E : \quad \,\, \xymatrix{
 u \ar@(lu,ld)_e\ar@/^0.9pc/[r]^f & v \ar@/^0.9pc/[l]^g
 }} \qquad \quad
{\def\labelstyle{\displaystyle}
F: \quad \,\, \xymatrix{
   u \ar@(ur,rd)^e  \ar@(u,r)^f
}}
\end{equation*}
Then
\begin{equation*}
\overline{E}: \quad \,\,\xymatrix@=15pt{
\dots  {u_{1}} \ar[rr]^-{e_{1}} \ar[drr]^(0.4){f_{1}} &&  {u_{0}} \ar[rr]^-{e_0} \ar[drr]^(0.4){f_0} && {u_{-1}}  \ar[rr]^-{e_{-1}} \ar[drr]^(0.4){f_{-1}} && \cdots\\
\dots {v_{1}}   \ar[urr]_(0.3){g_{1}} && {v_{0}} \ar[urr]_(0.3){g_0}  && {v_{-1}}  \ar[urr]_(0.3){g_{-1}}&& \cdots
}
\end{equation*}
and
\begin{equation*}
\overline{F}: \quad \,\,\xymatrix@=15pt{
\dots  {u_{1}} \ar@/^0.9pc/[r]^{f_{1}} \ar@/_0.9pc/[r]_{e_{1}}  &  {u_{0}} \ar@/^0.9pc/[r]^{f_0} \ar@/_0.9pc/[r]_{e_0} & {u_{-1}}  \ar@/^0.9pc/[r]^{f_{-1}}  \ar@/_0.9pc/[r]_{e_{-1}} & \quad \cdots
}
\end{equation*}
\end{example}

For the rest of the section we work with Leavitt path algebras with integral coefficients. We denote by $E^*$ the set of all finite paths in $E$. Recall that  the diagonal of the Leavitt path algebra $L_\Z(E)$, denoted by $\D_\Z(E)$, is $$\Z\-{\rm span}\{\alpha\alpha^*\;|\; \alpha\in E^*\},$$ which is a commutative subalgebra of $L_\Z(E)$. In particular, $\D_\Z(E)$ is an abelian group. 

\begin{lemma}\label{onthemove} For a graph $E$, the diagonal $\D_\Z(E)$ is the free abelian group generated by symbols $\{\alpha\alpha^*\;|\; \alpha\in E^*\}$ subject to the relation 
\begin{align}
\label{relation}
\alpha\alpha^*=\sum_{e\in s^{-1}(r(\alpha))}\alpha ee^*\alpha^*, 
\end{align} for $\alpha\in E^*$ with $r(\alpha)$ a regular vertex of $E$. 
\end{lemma}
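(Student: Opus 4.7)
The plan is to interpret the statement as a presentation, exhibiting an isomorphism between the abelian group $M$ presented by the given generators and relations and the diagonal $\D_\Z(E)$. The easy direction is that the CK2 relation $r(\alpha)=\sum_{e\in s^{-1}(r(\alpha))} ee^*$ in $L_\Z(E)$ (valid for regular $r(\alpha)$) immediately yields $\alpha\alpha^*=\alpha\, r(\alpha)\,\alpha^*=\sum_{e\in s^{-1}(r(\alpha))}\alpha ee^*\alpha^*$, so the relations \eqref{relation} hold in $\D_\Z(E)$. This gives a well-defined surjective group homomorphism $\phi\colon M\to\D_\Z(E)$, $[\alpha\alpha^*]\mapsto\alpha\alpha^*$. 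The content is the injectivity of $\phi$.

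For injectivity I would produce a subset $\mathcal{B}\subseteq E^*$ which is indexing a $\Z$-basis of $\D_\Z(E)$ and show simultaneously that the classes $\{[\alpha\alpha^*]\mid\alpha\in\mathcal{B}\}$ generate $M$. Using the fixed choice of a special edge at every regular vertex, define $\mathcal{B}$ to be the set of paths $\alpha$ that are either a vertex or a nontrivial path $\alpha=\alpha_1\cdots\alpha_n$ whose last edge $\alpha_n$ is not special. By Lemma \ref{lbasis}(1),(3) applied with $p=q=\alpha$, every element of $\{\alpha\alpha^*\mid\alpha\in\mathcal{B}\}$ lies in the standard $\Z$-basis of $L_\Z(E)$ (vertices are listed in item (1), and for nontrivial $\alpha$ ending in a non-special edge the pair $p=q=\alpha$ satisfies $\alpha_m=\beta_n$ not special). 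In particular these elements are $\Z$-linearly independent in $L_\Z(E)$ and hence in $\D_\Z(E)$.

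Next I would prove two parallel spanning statements, one in $\D_\Z(E)$ and one in $M$, by induction on the length of $\alpha$. If $\alpha\in\mathcal{B}$ there is nothing to do. Otherwise $\alpha=\beta e$ with $e$ the special edge at the regular vertex $s(e)=r(\beta)$. In $\D_\Z(E)$ we rewrite
\begin{equation*}
\alpha\alpha^*=\beta ee^*\beta^*=\beta\beta^*-\sum_{f\in s^{-1}(r(\beta)),\,f\neq e}(\beta f)(\beta f)^*,
\end{equation*}
and similarly in $M$ using the defining relation $[\beta\beta^*]=\sum_{f\in s^{-1}(r(\beta))}[\beta f(\beta f)^*]$. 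Each path $\beta f$ with $f\neq e$ ends in a non-special edge and hence lies in $\mathcal{B}$, while $\beta$ has strictly smaller length, so the induction hypothesis expresses $\beta\beta^*$ (resp.\ $[\beta\beta^*]$) as a $\Z$-combination of the $\mathcal{B}$-indexed family. Thus $\{\alpha\alpha^*\mid\alpha\in\mathcal{B}\}$ spans $\D_\Z(E)$ and its lifts span $M$.

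Combining these, $\{\alpha\alpha^*\mid\alpha\in\mathcal{B}\}$ is a $\Z$-basis of $\D_\Z(E)$ and its lifts generate $M$; since any relation among the lifts in $M$ would push down under $\phi$ to a relation among these basis elements of $\D_\Z(E)$ and therefore be trivial, the lifts form a $\Z$-basis of $M$ and $\phi$ is an isomorphism. The main obstacle I anticipate is the bookkeeping in the inductive step: one must ensure that after using \eqref{relation} to eliminate a trailing special edge, the path $\beta$ obtained is treated by the induction hypothesis in both $M$ and $\D_\Z(E)$ simultaneously, so that the same linear combination works in both places—this is exactly what makes the injectivity argument go through rather than merely establishing that $\phi$ is surjective with some ambiguous kernel.
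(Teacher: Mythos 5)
Your proof is correct and follows essentially the same route as the paper's: both arguments rest on the special-edge basis of Lemma~\ref{lbasis} and on using the relation \eqref{relation} to strip a trailing special edge. The paper packages injectivity as the construction of an explicit one-sided inverse $g$ defined on normal forms, whereas you phrase it as generation by the non-special family together with linear independence in $\D_\Z(E)$, but the underlying computation is identical.
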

\begin{proof} Denote by $G$ the free abelian group generated by symbols $\{\alpha\alpha^*\;|\; \alpha\in E^*\}$ subject to the relation given in \eqref{relation}. We prove that $G$ and $\D_\Z(E)$ are isomorphic as groups. We define a group homomorphism $f: G\xra \D_\Z(E)$ such that $f(\alpha\a^*)=\a\a^*$ for the generators $\a\a^*$ in $G$. Observe that $f$ is well defined since it preserves the relation in $G$. Obviously, $f$ is surjective. In order to show that $f$ is injective, it suffices to define a group homomorphism $g: \D_\Z(E)\xra G$ such that $gf=\id_{G}$. Recall from Lemma \ref{lbasis} that there is a basis of normal forms for $L_{\Z}(E)$. Since $\D_\Z(E)$ is a subset of $L_{\Z}(E)$, any element in $\D_\Z(E)$ has the normal form $\sum_{i=1}^nm_i\a_i\a_i^*$ such that $\a_i=\a_{i_1}\cdots\a_{i_{l_i}}$ a path in $E$ of length $l_i$ with $\a_{l_i}$ not special and $m_i$ is a nonzero integer. We define $g: \D_\Z(E)\xra G$ by $g(\sum_{i=1}^nm_i\a_i\a_i^*)=\sum_{i=1}^nm_i\a_i\a_i^*$ with $\sum_{i=1}^nm_i\a_i\a_i^*$ a normal form. Note that $g(x+y)=g(x)+g(y)$ for two normal forms in $\D_\Z(E)$. To check that $gf=\id_G$, we only need to show that $gf(\a\a^*)=\a\a^*$ for any generator $\a\a^*$ in $G$. We have the following two cases. If $\a=\a_{1}\cdots\a_{l}$ is a path in $E$ of length $l\geq 0$ with $\a_{l}$ not special, then $gf(\a\a^*)=g(\a\a^*)=\a\a^*$. In the case $l=0$, $\a$ is a vertex in $E$. If $\a=\a_{1}\cdots\a_{l}$ is a path in $E$ of length $l\geq 1$ with $\a_{l}$ special, then we can write $\a\a^*$ as the following normal form $$\a\a^*=\a_1\cdots\a_{k}\a_{k}^*\cdots\a_1^*-\sum_{j=k}^{l-1}\sum_{\{e\in s^{-1}(r(\a_j))\;|\; e\neq \a_{j+1}\}}\a_1\cdots\a_{j}ee^*\a_{j}^*\cdots\a_1^*,$$ where $1\leq k\leq l$ is the number such that $\a_{j}$ is special for any $j>k$. Then we have 
\begin{equation*}
\begin{split}
gf(\a\a^*)
&=g(\a\a^*)\\
&=\a_1\cdots\a_{k}\a_{k}^*\cdots\a_1^*-\sum_{j=k}^{l-1}\sum_{\{e\in s^{-1}(r(\a_j))\;|\; e\neq \a_{j+1}\}}\a_1\cdots\a_{j}ee^*\a_{j}^*\cdots\a_1^*\\
&=\a_1\cdots\a_{l-1}\a_l\a_l^*\a_{l-1}^*\cdots\a_1^*\\
&=\a\a^*, 
\end{split}
\end{equation*} where the second last equality holds because we have $\a_1\cdots\a_j\a_j^*\cdots\a_1^*=\sum_{e\in s^{-1}(r(\a_j))}\a_1\cdots\a_jee^*\a_j^*\cdots\a_1^*$ for each $j=k, k+1, \cdots, l-1$. This finishes the proof. 
\end{proof}

Let $E$ be a graph. We denote by $E^{\infty}$ the set of
infinite paths in $E$. Set
\[
X := E^{\infty}\cup  \{\mu\in E^{*}  \mid   r(\mu) \text{ is not a regular vertex}\}.
\]
Let
\[
\mG_{E} := \{(\a x,|\a|-|\b|, \b x) \mid   \a, \b\in E^{*}, x\in X, r(\a)=r(\b)=s(x)\}.
\]
We view each $(x, k, y) \in \mG_{E}$ as a morphism with range $x$ and source $y$. The
formulas $(x,k,y)(y,l,z)= (x,k + l,z)$ and $(x,k,y)^{-1}= (y,-k,x)$ define composition
and inverse maps on $\mG_{E}$ making it a groupoid with $\mG_{E}^{(0)}=\{(x, 0, x) \mid
x\in X\}$ which we identify with the set $X$.

Next, we describe a topology on $\mG_{E}$. For $\mu\in E^{*}$ define
\[
Z(\mu)= \{\mu x \mid x \in X, r(\mu)=s(x)\}\subseteq X.
\]
For $\mu\in E^{*}$ and a finite $F\subseteq s^{-1}(r(\mu))$, define
\[
Z(\mu\setminus F) = Z(\mu) \setminus \bigcup_{\a\in F} Z(\mu \a).
\]
The sets $Z(\mu\setminus F)$ constitute a basis of compact open sets for a locally
compact Hausdorff topology on $X=\mG_{E}^{(0)}$ (see \cite[Theorem 2.1]{we}).

For $\mu,\nu\in E^{*}$ with $r(\mu)=r(\nu)$, and for a finite $F\subseteq E^{*}$ such
that $r(\mu)=s(\a)$ for $\a\in F$, we define
\[
Z(\mu, \nu)=\{(\mu x, |\mu|-|\nu|, \nu x) \mid  x\in X, r(\mu)=s(x)\},
\]
and then
\[
Z((\mu, \nu)\setminus F) = Z(\mu, \nu) \setminus \bigcup_{\a\in F}Z(\mu\a, \nu\a).
\]
The sets $Z((\mu, \nu)\setminus F)$ constitute a basis of compact open bisections for a
topology under which $\mG_{E}$ is an ample groupoid. By \cite[Example~3.2]{cs},
the map
\begin{equation}
\label{assteinberg}
\pi_{E} : L_{R}(E) \longrightarrow A_{R}(\mG_{E})
\end{equation} with $R$ a commutative ring with unit defined by $\pi_{E}(\mu\nu^{*}-\sum_{\a\in F}\mu\a\a^{*}\nu^{*}) =
1_{Z((\mu,\nu)\setminus F)}$ extends to an algebra isomorphism. We observe that the
isomorphism of algebras in \eqref{assteinberg} satisfies
\begin{equation}
\label{valuation}
\pi_{E}(v)=1_{Z(v)}, \quad\pi_{E}(e)=1_{Z(e, r(e))}, \quad\pi_{E}(e^{*})=1_{Z(r(e), e)},
\end{equation}
for each $v\in E^{0}$ and $e\in E^{1}$. Observe that the isomorphism $\pi_E$ in \eqref{assteinberg} restricts to an isomorphism $D_{\Z}(E)\cong C_c(\mG_E^{(0)}, \Z)$ when $R=\Z$.

If $E$ is any graph, and $w : E^1 \to \G$ any function, we extend $w$ to $E^{*}$ by
defining $w(v) = 0$ for $v \in E^0$, and $w(\a_{1}\cdots \a_{n}) = w(\a_{1})\cdots
w(\a_{n})$. We obtain from \cite[Lemma 2.3]{kp} a continuous cocycle
$\widetilde{w}:\mG_{E}\xra \G$ satisfying
\[
    \widetilde{w}(\a x, |\a|-|\b|, \b x) = w(\a) w(\b)^{-1}.
\] Thus $\mG_E$ is $\G$-graded with the grading map $\widetilde{w}$ and thus we have the skew-product groupoid $\mG_E\times_{\widetilde{w}}\G$. There is a groupoid isomorphism 
\begin{equation}
\label{isoforgroupoid}
\rho:\mG_{\overline{E}}\longrightarrow\mG_E\times_{\widetilde{w}} \G\text{~~(see \cite[\S 5B]{ahls})}.\end{equation}

From now on, let $\G$ be an abelian group. We denote by $\D_\Z(\overline{E})$ the diagonal of the Leavitt path algebra $L_{\Z}(\overline{E})$ of the covering graph $\overline{E}$ with respect to the map $w:E^{1} \rightarrow \G$ (see~(\ref{lifeoflux})).

In order to give $D_{\Z}(\overline{E})$ a $\G$-module structure, we define a right continuous $\G$-action on the skew-product groupoid $\mG_{E}\times_{\widetilde{w}} \G$ such that for $g\in \mG_E, \a,\b\in\G$ 
\begin{equation}
\label{actionforsp}
(g, \a)\cdot \b=(g, \a\b^{-1}).
\end{equation} Note that the above action is a continuous action, since $\G$ is an abelian group. Recall from subsection \ref{subsection41} that there is an induced $\G$-action on $C_c((\mG_E\times_{\widetilde{w}}\G)^{(0)}, \Z)$. Since we have $D_{\Z}(\overline{E})\cong C_c(\mG_{\overline{E}}^{(0)}, \Z)\cong C_c((\mG_E\times_{\widetilde{w}}\G)^{(0)}, \Z)$, there is a $\G$-action on $D_\Z(\overline{E})$ such that 
\begin{equation}
\label{actionond}
(\a_\g\a_{\g}^*)\cdot \b=\a_{\g\b}\a_{\g\b}^*
\end{equation} for $\a\in E^*, \b,\g\in \G$. 

\begin{thm} \label{thmiso} Let $E$ be an arbitrary graph, $\G$ an abelian group, $R$ a field and $w: E^1\xra \G$ a function. Then there is a $\G$-module isomorphism 
$$\D_\Z(\overline{E})/{{\langle {r(\a)_{w(\alpha)^{-1}\g}-\a_{\g}\a^*_{\g}}\rangle}_{\a\in E^*, \g\in\G}}\cong K_0^{\gr}(L_R(E)).$$
\end{thm}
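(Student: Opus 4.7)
The plan is to give compatible $\G$-module presentations of both sides and match them.

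First I would apply Lemma~\ref{onthemove} to the covering graph $\overline{E}$. Parametrising paths of $\overline{E}$ by pairs $(\alpha,\gamma)\in E^{*}\times\G$, and noting that $v_\gamma$ is regular in $\overline{E}$ if and only if $v$ is regular in $E$, this presents $\D_\Z(\overline{E})$ as the free abelian group on symbols $\{\alpha_\gamma\alpha_\gamma^{*}\}_{\alpha\in E^{*},\gamma\in\G}$ modulo
\begin{equation}\label{relDfd}
\alpha_\gamma\alpha_\gamma^{*}=\sum_{e\in s^{-1}(r(\alpha))}(\alpha e)_\gamma(\alpha e)_\gamma^{*},\qquad r(\alpha)\text{ regular}.
\end{equation}
Denoting by $N$ the stated quotient, I would observe that for $\alpha\in E^{0}$ the defining relator $r(\alpha)_{w(\alpha)^{-1}\gamma}-\alpha_\gamma\alpha_\gamma^{*}$ already vanishes, while for general $\alpha$ it identifies the path symbol $\alpha_\gamma\alpha_\gamma^{*}$ with the vertex symbol $r(\alpha)_{w(\alpha)^{-1}\gamma}$. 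Substituting into \eqref{relDfd} and using the cocycle identity $w(\alpha e)=w(\alpha)w(e)$, the relation collapses, after setting $v=r(\alpha)$ and $\mu=w(\alpha)^{-1}\gamma$, to
\begin{equation}\label{relVvf}
v_\mu=\sum_{e\in s^{-1}(v)}r(e)_{w(e)^{-1}\mu},\qquad v\text{ regular},\;\mu\in\G.
\end{equation}
A routine check involving the surjection from the free abelian group on path symbols onto the free abelian group on vertex symbols then shows $N$ is presented by generators $\{v_\mu\}_{v\in E^{0},\mu\in\G}$ and relations \eqref{relVvf}.

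Next I would give the parallel presentation of $K_0^{\gr}(L_R(E))$. By \cite[Proposition~5.7]{ahls} one has $\VV^{\gr}(L_R(E))\cong M_E^{\gr}$ as $\G$-monoids, so $K_0^{\gr}(L_R(E))$ is the group completion of $M_E^{\gr}$. Inside this group, the defining relation~(2) of $M_E^{\gr}$ can be solved as $b_Z(\gamma)=a_u(\gamma)-\sum_{e\in Z}a_{r(e)}(w(e)^{-1}\gamma)$; a short check using $Z_2=Z_1\sqcup(Z_2\setminus Z_1)$ shows relation~(3) then becomes automatic. Hence $K_0^{\gr}(L_R(E))$ is the free abelian group on $\{a_v(\gamma)\}_{v\in E^{0},\gamma\in\G}$ modulo $a_v(\gamma)=\sum_{e\in s^{-1}(v)}a_{r(e)}(w(e)^{-1}\gamma)$ for regular $v$, in perfect agreement with \eqref{relVvf}.

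Finally, the assignment $v_\mu\mapsto a_v(\mu)$ matches the two presentations and gives the desired abelian-group isomorphism. For $\G$-equivariance I would invoke \eqref{actionond}, which yields $v_\mu\cdot\beta=v_{\mu\beta}$ on $N$, together with \eqref{groupaction1}, which yields $\beta\cdot a_v(\mu)=a_v(\beta\mu)$; since $\G$ is abelian the two actions coincide and the map is $\G$-equivariant. The main obstacle will be the careful presentation-matching in the first two paragraphs: verifying that the seemingly larger family \eqref{relDfd} indexed by all paths genuinely collapses, after quotienting, to only the vertex-level relations \eqref{relVvf}, and in parallel that the $b_Z(\gamma)$ can be eliminated in the group completion, rendering relation~(3) redundant.
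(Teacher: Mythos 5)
Your proposal is correct and follows essentially the same route as the paper: the correspondence $\alpha_\gamma\alpha_\gamma^*\leftrightarrow a_{r(\alpha)}(w(\alpha)^{-1}\gamma)$ underlying your presentation-matching is exactly the paper's pair of mutually inverse maps $f$ and $g$, built on Lemma~\ref{onthemove} applied to $\overline{E}$ and on \cite[Proposition 5.7]{ahls}. The only organizational difference is that you eliminate the generators $b_Z(\gamma)$ from the group completion up front (making relation (3) redundant), whereas the paper keeps them and defines $g(b_Z(\gamma))=u_\gamma-\sum_{e\in Z}e_\gamma e_\gamma^*$, which is the same substitution in disguise.
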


\begin{proof}
We define a map $$f:\D_\Z(\overline{E})\longrightarrow {M_E^{\gr}}^{+}$$ such that $f(\alpha_{\g}\alpha_{\g}^*)=a_{r(\alpha)}(w(\alpha)^{-1}\g)$ for $\alpha\in E^*$. In the case that $\a=v$ is a vetex, $f(v_{\g})=a_v(\g)$. By Lemma~\ref{onthemove}, the map $f$ is well defined since $$f(\alpha_{\g}\alpha_{\g}^*)=a_{r(\alpha)}(w(\alpha)^{-1}\g)=\sum_{e\in s_E^{-1}(r(\alpha))}a_{r(e)}(w(e)^{-1}w(\alpha)^{-1}\g)=f\big (\sum_{e\in s_E^{-1}(r(\alpha))}\alpha_{\g} e_{w(\alpha)^{-1}\g}e_{w(\alpha)^{-1} \g)}^*\alpha_{\g}^*\big )$$ which follows from the relation (1) in $M_E^{\gr}$ for each $\alpha\in E^*$ with $r(\a)$ a regular vertex. We observe that $f(r(\a)_{w(\alpha)^{-1}\g}-\a_{\g}\a^*_{\g})=0$ for $\a\in E^*$ and $\g\in\G$. Then there is an induced group homomorphism 
\begin{equation}\label{ffunction}
f: \D_\Z(\overline{E})/{{\langle {r(\a)_{w(\alpha)^{-1}\g}-\a_{\g}\a^*_{\g}}\rangle}_{\a\in E^*, \g\in\G}}\longrightarrow {M_E^{\gr}}^{+}.\end{equation}

Next we define a monoid homomorphism $g: {M_E^{\gr}}\xra \D_\Z(\overline{E})/{{\langle {r(\a)_{w(\alpha)^{-1}\g}-\a_{\g}\a^*_{\g}}\rangle}_{\a\in E^*, \g\in\G}}$ by  $$g(a_{v}(\g))=v_{\g} \text{~~~~and~~~~} g(b_Z(\g'))=u_{\g'}-\sum_{e\in Z}e_{\g'}e_{\g'}^*,$$ for $v\in E^0$, $Z\subseteq s_{E}^{-1}(u)$ a nonzero finite subset for an infinite emitter $u\in E^0$ and $\g,\g'\in \G$. In order to check that $g$ is well defined, we need to show that the relations (1), (2), (3) of $M_E^{\gr}$ are satisfied. For (1), $$g(a_{v}(\g))=v_{\g}=\sum_{e\in s^{-1}(v)}e_{\g}e_{\g}^*=\sum_{e\in s^{-1}(v)}r(e)_{w(e)^{-1}\g}=g(\sum\limits_{e\in s^{-1}(v)}a_{r(e)}(w(e)^{-1}\g)),$$ for regular vertex $v\in E^0$ and $\g\in\G$. For relation (2) of $M_E^{\gr}$, we have $$g\big (\sum\limits_{e\in Z}a_{r(e)}(w(e)^{-1}\g)+b_{Z}({\g})\big)=\sum\limits_{e\in Z}r(e)_{w(e)^{-1}{\g}}+(u_{\g}-\sum_{e\in Z}e_{\g}e_{\g}^*)=u_{\g}=g(a_u(\g)),$$ for $u\in E^0$ an infinite emitter, $Z\subseteq s^{-1}_E(u)$ a nonzero finite subset and $\g\in \G$. For (3), we have 
\begin{align*}
g\big (\sum\limits_{e\in Z_{2}\setminus
    Z_{1}}a_{r(e)}({w(e)^{-1}\g})+b_{Z_{2}}({\g})\big)
&=\sum\limits_{e\in Z_2\setminus Z_1} r(e)(w(e)^{-1}\g)
+u_{\g}-\sum\limits_{e\in Z_2}e_{\g} e_{\g}^*\\
&=u_{\g}+\big (\sum\limits_{e\in Z_2\setminus Z_1}e_{\g} e_{\g}^* -\sum\limits_{e\in Z_2}e_{\g} e_{\g}^*\big)\\
&=u_{\g}-\sum_{e\in Z_1}e_{\g} e_{\g}^*\\
&=g(b_{Z_1}(\g)),
\end{align*} for all $\g\in\G$, infinite
    emitters $u\in E^{0}$ and nonempty finite subsets $Z_{1}\subseteq Z_{2}\subseteq
    s^{-1}(u)$. Then we have an induced group homomorphism 
 \begin{equation}
    \label{g}
    g: {M_E^{\gr}}^+\xra \D_\Z(\overline{E})/{{\langle {r(\a)_{w(\a)^{-1}\g}-\a_{\g}\a^*_{\g}}\rangle}_{\a\in E^*, \g\in\G}}.
    \end{equation}
    
Observe that $f=g^{-1}$ as group homomorphisms. Indeed, $gf(\alpha_\g\alpha_\g^*)=g(a_{r(\alpha)}(w(\alpha)^{-1}\g))=r(\alpha)_{w(\alpha)^{-1}\g}=\alpha_\g\alpha_\g^*$ in $\D_\Z(\overline{E})/{{\langle {r(\a)_{w(\a)^{-1}\g}-\a_{\g}\a^*_{\g}}\rangle}_{\a\in E^*, \g\in\G}}$ for each $\alpha\in E^*$ and $\g\in \G.$
On the other hand, $fg(a_v(\g))=f(v_{\g})=a_v(\g)$ and $fg(b_Z(\g))=f(u_{\g}-\sum_{e\in Z}e_{\g}e_{\g}^*)=a_u(\g)-\sum_{e\in Z}a_{r(e)}(w(e)^{-1}\g)=b_Z(\g)$. Comparing the actions given in \eqref{groupaction1} and \eqref{actionond}, $f$ is a $\G$-module isomorphism. 
\end{proof}

We are in a position to write the main theorem of this section. 

\begin{thm}\label{sojoy}
Let $E$ be an arbitrary graph, $R$ a field and $w:E^1\xra \Z$ a function such that $w(e)=1$ for each edge $e\in E^1$. Then there is a $\Z[x,x^{-1}]$-module isomorphism 
\begin{equation*}
\label{isomain}
\big (H^{\gr}_0(\mG_E), H_0^{\gr}(\mG_E)^+ \big )\cong \big (K^{\gr}_{0}(L_R(E)), K^{\gr}_{0}(L_R(E))^+\big).
\end{equation*}
Furthermore, If $E$ is finite then, $[1_{\mG_E^0}] \mapsto [L(E)]$. 

\end{thm}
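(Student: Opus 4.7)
The plan is to identify $H_0^{\gr}(\mG_E)$ with a suitable quotient of the diagonal algebra $\D_\Z(\overline{E})$ and then invoke Theorem~\ref{thmiso}. First I would use the groupoid isomorphism $\rho:\mG_{\overline{E}}\xra\mG_E\times_{\widetilde{w}}\Z$ from~\eqref{isoforgroupoid} to obtain $H_0^{\gr}(\mG_E) = H_0(\mG_E\times_{\widetilde{w}}\Z) \cong H_0(\mG_{\overline{E}}) = C_c(\mG_{\overline{E}}^{(0)},\Z)/{\rm Im}\,\partial_1$. The Steinberg isomorphism~\eqref{assteinberg} applied to $\overline{E}$ restricts to an isomorphism $\pi_{\overline{E}}:\D_\Z(\overline{E})\to C_c(\mG_{\overline{E}}^{(0)},\Z)$ sending $\a_\g\a_\g^*\mapsto 1_{Z(\a_\g)}$ and $v_\g\mapsto 1_{Z(v_\g)}$. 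The goal thus reduces to identifying the preimage of ${\rm Im}\,\partial_1$ inside $\D_\Z(\overline{E})$.

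The crux is to prove that this preimage equals the submodule $I:=\langle r(\a)_{w(\a)^{-1}\g}-\a_\g\a_\g^* \mid \a\in E^*,\g\in\Z\rangle$ of $\D_\Z(\overline{E})$. By Lemma~\ref{imlemma}, ${\rm Im}\,\partial_1$ is the $\Z$-span of $1_{s(U)}-1_{r(U)}$ over all compact open bisections $U$ of $\mG_{\overline{E}}$. For the inclusion $I\subseteq{\rm Im}\,\partial_1$, I would take $U=Z(r(\a)_{w(\a)^{-1}\g},\a_\g)$; its source and range are $Z(\a_\g)$ and $Z(r(\a)_{w(\a)^{-1}\g})$, so $\pi_{\overline{E}}^{-1}(1_{s(U)}-1_{r(U)})=\a_\g\a_\g^*-r(\a)_{w(\a)^{-1}\g}$. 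For the reverse inclusion, a basic bisection of $\mG_{\overline{E}}$ has the form $Z((\mu,\nu)\setminus F)$ with $\mu,\nu$ paths in $\overline{E}$ of common range and $F$ a finite family of paths out of $r(\mu)$, and translating via $\pi_{\overline{E}}$ produces the element $(\nu\nu^*-\sum_{\a\in F}\nu\a\a^*\nu^*)-(\mu\mu^*-\sum_{\a\in F}\mu\a\a^*\mu^*)$. Since every path $\beta$ in $\overline{E}$ satisfies $\beta\beta^*\equiv r(\beta)\pmod{I}$, this expression reduces modulo $I$ to $(r(\nu)-\sum r(\nu\a))-(r(\mu)-\sum r(\mu\a))=0$ using $r(\mu)=r(\nu)$ and $r(\mu\a)=r(\a)=r(\nu\a)$. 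This reverse containment is the main technical hurdle, since it requires a uniform handling of all compact open bisections rather than just the canonical generators.

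With the identification $H_0^{\gr}(\mG_E)\cong\D_\Z(\overline{E})/I$ in hand, Theorem~\ref{thmiso} immediately yields the desired $\Z[x,x^{-1}]$-module isomorphism to $K_0^{\gr}(L_R(E))$, since the $\Z$-action on $\D_\Z(\overline{E})$ from~\eqref{actionond} matches the right $\Z$-action on the skew-product and commutativity of $\Z$ ensures the two module structures coincide. For the positive cones, every $f\geq 0$ in $C_c(\mG_{\overline{E}}^{(0)},\Z)$ is a non-negative integer combination of characteristic functions $1_{Z(\mu\setminus F)}$, and the defining relations $a_v(\g)=\sum_{e\in s^{-1}(v)}a_{r(e)}(w(e)^{-1}\g)$ and $a_u(\g)=\sum_{e\in Z}a_{r(e)}(w(e)^{-1}\g)+b_Z(\g)$ of $M_E^{\gr}$ show that modulo $I$ each such class is sent to an element of ${M_E^{\gr}}^{+}=K_0^{\gr}(L_R(E))^{+}$, yielding the order-preserving property. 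Finally, when $E$ is finite, $[1_{\mG_E^0}]\in H_0^{\gr}(\mG_E)$ is represented in the skew-product by $1_{\mG_E^{(0)}\times\{0\}}$, which corresponds under $\pi_{\overline{E}}$ to $\sum_{v\in E^0}v_0\in\D_\Z(\overline{E})$ and then under the isomorphism of Theorem~\ref{thmiso} to $\sum_{v\in E^0}a_v(0)=[L_R(E)]$ in $K_0^{\gr}(L_R(E))$.
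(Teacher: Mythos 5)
Your proposal is correct and follows essentially the same route as the paper: both identify $H_0^{\gr}(\mG_E)\cong H_0(\mG_{\overline{E}})$ via $\rho$, use Lemma~\ref{imlemma} to show that the image of $\partial_1$ pulls back under $\pi_{\overline{E}}$ to the ideal $\langle r(\a)_{w(\a)^{-1}\g}-\a_\g\a_\g^*\rangle$ in $\D_\Z(\overline{E})$ (the paper reduces the spanning set directly to $1_{Z(r(\a))}-1_{Z(\a)}$, while you argue the two inclusions via the basic bisections $Z((\mu,\nu)\setminus F)$ --- equivalent up to the routine inclusion--exclusion when the paths in $F$ overlap), and then invoke Theorem~\ref{thmiso}. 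The only point you leave implicit is the direction $K_0^{\gr}(L_R(E))^+\to H_0^{\gr}(\mG_E)^+$ of the cone correspondence, but this is immediate from $a_v(\g)\mapsto 1_{Z(v)\times\{\g\}}$ and $b_Z(\g)\mapsto 1_{\bigl(Z(u)\setminus\bigsqcup_{e\in Z}Z(e)\bigr)\times\{\g\}}$, exactly as the paper records; your handling of the opposite direction via $1_{Z(\mu\setminus F)}$ and the $b_Z$ relations is in fact slightly more careful around infinite emitters than the paper's decomposition into cylinders.
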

\begin{proof} Observe that by Lemma \ref{imlemma} for the groupoid $\mG_E$ we have
\begin{equation}
\label{image}
\begin{split}
{\rm Im} \partial_1
&=\Z\-{\rm Span}\{1_{s(U)}-1_{r(U)}\;|\; U\in B^{\rm co}_*(\mG_E)\}\\
&=\Z\-{\rm Span}\{1_{s(U)}-1_{r(U)}\;|\; U=\sqcup_{i=1}^n Z(\a_i, \b_i)\in B^{\rm co}_*(\mG_E) \text{~for ~}\a_i,\b_i\in E^*\text{~with~} r(\a_i)=r(\b_i) \text{~for each~} i\}\\
&=\Z\-{\rm Span}\{1_{Z(\b)}-1_{Z(\a)}\;|\; \a, \b\in E^* \text{~with~} r(\a)=r(\b)\}\\
&=\Z\-{\rm Span}\{1_{Z(r(\a))}-1_{Z(\a)}\;|\;\a\in E^*\}.
\end{split}
\end{equation} Here the third equality holds since the intersection of $Z(\a, \b)$ and $Z(\a',\b')$ for $\a, \b,\a',\b'\in E^*$ with $r(\a)=r(\b)$ and $r(\a')=r(\b')$ is either empty or equal to one of them, and the source (or range) of $\sqcup_{i} Z(\a_i, \b_i)$ is $\sqcup_iZ(\b_i)$ (or $\sqcup_iZ(\b_i)$). Recall that $\pi_E$ in \eqref{assteinberg} restricts to an isomorphism $D_{\Z}(E)\cong C_c(\mG_E^{(0)}, \Z)$ and sends $r(\a)-\a\a^*$ to $1_{Z(r(\a))}-1_{Z(\a,\a)}=1_{Z(r(\a))}-1_{Z(\a)}$. Then by \eqref{image}, we have $H_0(\mG_E)=C_c(\mG^{(0)}, \Z)/{\rm Im}\partial_1\cong D_\Z(E)/{\langle r(\a)-\a\a^* \rangle}_{\a\in E^*}$. Similarly for $\overline{E}$, we have 
\begin{equation}
\label{diagnaliso}
H_0(\mG_{\overline{E}})\cong D_\Z(\overline{E})/\langle r(\a)_{w(\a)^{-1}\g}-\a_{\g}\a_{\g}^* \rangle_{\a\in E^*, \g\in\G}.
\end{equation} Observe that the $\Z$-action on $C_c(\mG_E^{(0)}\times \Z, \Z)$ is given by $(\b\cdot f)(x)=f(x\cdot \b)$ for $x\in \mG_E^{(0)}\times \Z, f\in C_c(\mG_E^{(0)}\times \Z, \Z),$ and $\b\in\Z$, where $x\cdot \b$ is the action given in \eqref{actionforsp}. To show that the isomorphism given in \eqref{diagnaliso} perserves the $\Z$-module structure, it suffices to show that $\b\cdot 1_{Z(\a)\times\{\g\}}=1_{Z(\a)\times\{\g+\b\}}$ for $\a\in E^*, \b,\g\in\Z$, which can be checked directly.  Combining Theorem \ref{thmiso}, 
\begin{equation}
\label{mainthm}
H_0^{\gr}(\mG_E)=H_0(\mG_E\times_{\widetilde{w}}\Z)\cong D_\Z(\overline{E})/{\langle r(\a)_{w(\a)^{-1}\g}-\a_{\g}\a_{\g}^* \rangle}_{\a\in E^*, \g\in\G}\cong K_0^{\gr}(L_R(E)),\end{equation} 
are $\Z$-module isomorphisms and thus are $\Z[x, x^{-1}]$-module isomorphisms.

Recall that $\VV^{\gr}(L_R(E))\cong M_E^{\gr}$ is the positive cone of $K_0^{\gr}(L_R(E))$. We first show that the isomorphism 
$$\Phi: K_0^{\gr}(L_R(E))\xra H_0^{\gr}(\mG_E),$$ given in \eqref{mainthm} carries $K_0^{\gr}(L_R(E))^+$ to $H_0^{\gr}(\mG_E)^+.$ By \eqref{g}, \eqref{valuation} and \eqref{isoforgroupoid}, $$\Phi(a_v(\g))=1_{Z(v)\times \{\g\}}\in H_0^{\gr}(\mG_E)^+$$ and \begin{equation*}
\begin{split}
\Phi(b_Z(\g'))
&=\rho(\pi_{\overline{E}}(u_{\g'}-\sum_{e\in Z}e_{\g'}e_{\g'}^*))\\
&=1_{Z(u)\times\{\g'\}}-\sum_{e\in Z}1_{Z(e)\times\{\g'\}}\\
&=1_{Z(u)\setminus (\sqcup_{e\in Z}Z(e)) \times\{\g'\}}\\
&=1_{\sqcup_{e\in s^{-1}(u)\setminus Z}Z(e)) \times\{\g'\}}\in H_0^{\gr}(\mG_E)^+.
\end{split}
\end{equation*} Conversely we show that the isomorphism $\Phi^{-1}: H_0^{\gr}(\mG_E)\xra K_0^{\gr}(L_R(E))$ carries $H_0^{\gr}(\mG_E)^+$ to $K_0^{\gr}(L_R(E))^+$. If $[h]\in H_0^{\gr}(\mG_E)^+$, then $h\in C_c(\mG_E^{(0)}\times\Z, \Z)$ with $h(x)\geq 0$ for any $x\in \mG_E^{(0)}\times\Z$. We may write $h=\sum_{i}n_i1_{Z(\a_i)\times \{\g_i\}}$ with $n_i\geq 0$ and
 $Z(\a_i)\times \{\g_i\}$'s mutually disjoint. By \eqref{ffunction} and \eqref{diagnaliso}, we have 
 $$\Phi^{-1}(h)=\sum_in_i f({\a_i}_{\g_i}{{\a_i}_{\g_i}}^*)=\sum_in_i a_{r(\a_i)}(w(\a_i)^{-1}\g_i)\in K_0^{\gr}(L_R(E))^+.$$  
This completes the proof. 
\end{proof}


Theorem~\ref{sojoy} allows us to use the graded homology as a capable invariant in symbolic dynamics. Recall that two-sided shift spaces $X$ and $Y$ are called eventually conjugate if $X^n$ is conjugate to $Y^n$, for all sufficiently large $n$ (\cite[Definition~7.5.14]{lindmarcus}). For a two-sided shift $X$, we denote by $X^{+}$ the one-sided shift associated to $X$, namely, $X^{+}=\{x_{[0,\infty]} \mid x \in X\}$ (see~\cite{carlseneilers} for a summary of concepts on symbolic dynamics).

\begin{thm}\label{colmorgen}
Let $X$ and $Y$ be two-sided shift of finite types. Then $X$ is eventually conjugate to $Y$ if and only if there is an order preserving  $\Z[x,x^{-1}]$-module isomorphism $H_0^{\gr}(\mG_{X^{+}}) \cong H_0^{\gr}(\mG_{Y^{+}})$, where $\mG_{X^{+}}$ and $\mG_{Y^{+}}$ are groupoids associated to one-sided shift $X^{+}$ and $Y^{+},$ respectively.  
\end{thm}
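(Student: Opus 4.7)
The plan is to combine Theorem~\ref{sojoy} with the classical Krieger--Williams classification of shifts of finite type.

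First I would reduce to a graph-theoretic setting. Since two-sided shifts of finite type correspond (up to conjugacy) to finite graphs with no sinks (see \cite[Chapter~2]{lindmarcus}), I can write $X = X_E$ and $Y = X_F$ for two such graphs $E$ and $F$. By construction the groupoids associated to the one-sided shifts satisfy $\mG_{X^{+}} = \mG_E$ and $\mG_{Y^{+}} = \mG_F$ with their canonical $\Z$-gradings.

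Next, I apply Theorem~\ref{sojoy} to both $E$ and $F$ (for any choice of field $R$) to obtain order preserving $\Z[x,x^{-1}]$-module isomorphisms
\begin{equation*}
H_0^{\gr}(\mG_{X^{+}}) \cong K_0^{\gr}(L_R(E)) \qquad \text{and} \qquad H_0^{\gr}(\mG_{Y^{+}}) \cong K_0^{\gr}(L_R(F)).
\end{equation*}
Thus the desired isomorphism of graded homology exists if and only if an order preserving $\Z[x,x^{-1}]$-module isomorphism exists between the graded Grothendieck groups of the corresponding Leavitt path algebras.

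Then I invoke the symbolic-dynamics side. Using the explicit description of $\VV^{\gr}(L_R(E)) \cong M_E^{\gr}$ from \cite[\S5C]{ahls}, the ordered triple $(K_0^{\gr}(L_R(E)), K_0^{\gr}(L_R(E))^+, \cdot x)$ is isomorphic to Krieger's dimension triple $(\Delta_{A_E}, \Delta_{A_E}^+, \delta_{A_E})$ associated to the adjacency matrix $A_E$, where multiplication by $x$ corresponds to Krieger's shift automorphism $\delta_{A_E}$. By Krieger's theorem, two such dimension triples are isomorphic precisely when the matrices $A_E$ and $A_F$ are shift equivalent, and by Williams' theorem (see \cite[Theorem~7.5.15]{lindmarcus}) shift equivalence of $A_E$ and $A_F$ is in turn equivalent to eventual conjugacy of the two-sided shifts $X_E$ and $X_F$. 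Chaining these equivalences yields the statement.

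The main obstacle is the verification in the third step: one must check carefully that the $\Z[x,x^{-1}]$-action on $K_0^{\gr}(L_R(E))$ induced by the grading shift functor $\mathcal{T}_1$ agrees under our isomorphism with Krieger's dimension-group shift $\delta_{A_E}$, and that the natural positive cone of the graded monoid corresponds exactly to $\Delta_{A_E}^{+}$. Once these matchings are verified, by unwinding Theorem~\ref{thmiso} together with the construction of $M_E^{\gr}$, the theorem reduces to a chain of already-known equivalences.
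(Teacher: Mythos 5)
Your proposal is correct and follows essentially the same route as the paper's proof: reduce to graph shifts $X_E$, $X_F$, apply Theorem~\ref{sojoy} to replace $H_0^{\gr}(\mG_{X^{+}})$ by $K_0^{\gr}(L_R(E))$, identify the latter with Krieger's dimension triple $(\Delta_{A_E},\Delta_{A_E}^{+},\delta_{A_E})$, and invoke the Krieger--Williams equivalence between isomorphism of dimension triples and eventual conjugacy. The verification you flag as the main obstacle --- that the shift action and positive cone of $K_0^{\gr}(L_R(E))$ match $\delta_{A_E}$ and $\Delta_{A_E}^{+}$ --- is exactly the point the paper disposes of by citing \cite[Theorem~3.11.7]{grbook}, so no new argument is needed there.
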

\begin{proof}
Let $E$ and $F$ be the graphs such that $X$ and $Y$ are conjugate to $X_E$ and $X_F$, respectively. Then $X$ is eventually conjugate to $Y$ if and only if there is an isomorphism between Krieger's dimension groups, namely, $(\Delta _{A_E},\Delta_{A_E}^{+},\delta_{A_E})\cong (\Delta _{A_F},\Delta_{A_F}^{+},\delta_{A_F})$, where $A_E$ and $A_F$ are the adjacency matrices of $E$ and $F$ (see \cite[Theorem~7.5.8]{lindmarcus}). On the other hand, by \cite[Theorem~3.11.7]{grbook}, $(K_0^{\gr}(L_R(E), K_0^{\gr}(L_R(E)^+)\cong (\Delta _{A_E},\Delta_{A_E}^{+})$ which respects the $\Z[x,x^{-1}]$-module structure. Note that the groupoid $\mG_{X^+}$ associated to the one-sided shift $X^+$  is isomorphic to the graph groupoid $\mG_E$. Combining this with Theorem~\ref{sojoy} the result follows. 
\end{proof}

\section{Acknowledgements}
This research was supported by the Australian Research Council grant DP160101481. The authors benefitted a great deal from the informative workshops in topological groupoids in Copenhagen in October 2017 and Faroe Islands in May 2018. They would like to thank the organisers  Kevin Brix, Toke Carlsen, S\o ren Eilers  and Gunnar Restorff.

\end{document}